\documentclass[11pt]{amsart}
\usepackage{amsmath}
\usepackage{amssymb}
\usepackage{epsfig}
\usepackage{graphics}
\usepackage{subcaption}
\usepackage{amsthm}
\usepackage{hyperref}
\usepackage{multicol}
\usepackage{bm}
\usepackage{url}

\usepackage{color}

\numberwithin{equation}{section}

\usepackage{cleveref}

\newtheorem{theorem}{Theorem}
\newtheorem{lemma}{Lemma}[section]
\newtheorem{corollary}{Corollary}

\newtheorem{proposition}{Proposition}
\newtheorem{remark}{Remark}[section]

\newcommand\be{\begin{equation}}
\newcommand\ee{\end{equation}}
\newcommand\bea{\begin{eqnarray}}
\newcommand\eea{\end{eqnarray}}
\newcommand\beaa{\begin{eqnarray*}}
\newcommand\eeaa{\end{eqnarray*}}
\newcommand\bay{\begin{array}}
\newcommand\eay{\end{array}}
\newcommand\ba{\begin{align}}
\newcommand\ea{\end{align}}

\newcommand\dps{\displaystyle}

\usepackage{graphicx}
\ifx\pdfoutput\undefined
\else
\fi
\graphicspath{{images/}}

\usepackage[inline]{enumitem}

\newcommand{\tr}{\textup{tr}}

\newcommand{\mcI}{\mathcal{I}}

\title[]{Extinction and the global structure of positive equilibria in a ratio-dependent intraguild predation model}
\author[T.-H. Chan]{Tzu-Hsuan Chan}
\address{Department of Applied Mathematics, National Yang Ming Chiao Tung University, Hsinchu 300, Taiwan.}
\email{iljanjc@gmail.com}
\author[C.-H. Wu]{Chang-Hong Wu}
\address{Department of Applied Mathematics, National Yang Ming Chiao Tung University, Hsinchu 300, Taiwan.}
\email{changhong@math.nctu.edu.tw}

\begin{document}
\thanks{{\em 2020 Mathematics Subject Classification.} Primary 92D25; Secondary 34C23, 34D23. }

\keywords{Intraguild predation, ratio-dependence, extinction, competitive exclusion,
    coexistence.}

\maketitle

\setlength{\baselineskip}{13pt}
\begin{abstract} 
We study an intraguild predation model with ratio-dependent trophic interactions. We first derive sufficient conditions for consumer extinction, total extinction, and global convergence to biologically relevant boundary equilibria. We then turn to coexistence at equilibrium and introduce a biologically motivated coordinate transformation that reduces the positive-equilibrium problem to an explicitly parameterized one-dimensional curve. This reduction enables us to determine how the positive-equilibrium branch connects to boundary equilibria and to classify all of its admissible global geometries.
We also discuss the stability of positive equilibria. 
Numerical examples suggest the presence of periodic and chaotic coexistence, as well as several forms of multiple attractors.
\end{abstract}

\section{Introduction}
In a three-species ecological system, the interaction structures, such as food chains and competing predators, have been classified into 34 cases \cite{krikorian1979volterra}. Among them, five main structures have been extensively discussed in \cite{hsu2015analysis}. In particular, intraguild predation (IGP, also called omnivory) is a three-species structure: the resource, the IG prey, and the IG predator. In particular, IG prey and IG predator are called the consumers. In this structure, the IG prey has only one predation pathway to the resource, but the IG predator has two predation pathways to the resource and the IG prey. This makes the IGP structure asymmetric and blends several simple ecological structures, including the simple food chain, the exploitative competition, and the apparent competition \cite{polis1989ecology,polis1992intraguild,holt1997theoretical}.

The general IGP model can be written as (\cite{holt1997theoretical})
\begin{equation}\label{eq:general-IGP}
\begin{aligned}
    x'&=x\phi(x)-yg_1(x,y,z)-zg_2(x,y,z)\\
    y'&=e_1yg_1(x,y,z)-zg_3(y,z)-d_1y\\
    z'&=e_2zg_2(x,y,z)+e_3zg_3(y,z)-d_2z
\end{aligned}
\end{equation}
where $x,y,z$ represent the resource, IG prey, and IG predator; $g_1$, $g_2$, and $g_3$ are trophic functions; $\phi$ is the recruitment function of the resource; $e_1,e_2,e_3$ are the efficiency of the consumer converting consumed prey; and $d_1,d_2$ are the death rate of IG prey and IG predator. 

Several studies have investigated the dynamics and coexistence state of IGP models. For general trophic function, Holt and Polis \cite{holt1997theoretical} provide that the coexistence only occurs when the IG prey is superior in the exploitative competition. Bl\'{e} et al.  \cite{ble2022stable} established Hopf conditions for stable coexistence cycles and showed that Bautin bifurcation can generate bistability in IGP models with general prey-dependent trophic function. For linear functional responses, Tanabe and Namba \cite{tanabe2005omnivory} demonstrated that even simple linear functional responses can generate chaotic dynamics in a three-species IGP model. Hsu et al. \cite{hsu2015analysis} completely classified the parameter space into extinction and uniform-persistence regimes for a specialist-predator formulation. Yang et al. \cite{yang2016global} classified all local dynamics and most of the global dynamics for a generalist-predator formulation. For nonlinear trophic function, Kang and Wedekin \cite{kang2013dynamics} compared IGP models with generalist and specialist predators, assuming Holling type I functional responses between the basal resource and both consumers and a Holling type III response between the intraguild prey and predator, and found that the generalist-predator model is prone to coexist. Jin et al. \cite{jin2025exploring} classified the coexistence equilibria of a Holling type II IGP model and established transcritical and Hopf bifurcations governing transitions between stable and oscillatory coexistence. Ji and Wang \cite{ji2022competitive} showed that predator interference can promote and stabilize coexistence while also generating bistability and complex dynamics in the IGP model with a Beddington-DeAngelis-type trophic function.

In reality, as predator density increases, the amount of prey available to each predator may decrease due to the limit of prey, giving rise to a prey-sharing effect \cite{arditi1989coupling,tyutyunov2021ratio}. Consequently, Arditi and Ginzburg \cite{arditi1989coupling} proposed a ratio-dependent formulation to explicitly account for prey sharing among predators. Some biological research also supports the ratio-dependent trophic function in ecosystems \cite{arditi1991variation,ginzburg1992consequences,sentis2013functional,hossie2016spatial}, and some mathematical research has studied models with ratio-dependent trophic function \cite{kuang1998global,hsu2001rich,hsu2003ratio} across different ecological structures.

The ratio-dependent trophic functions appearing in \eqref{eq:general-IGP} have been investigated in previous studies \cite{arino2004infection,freeze2014analysis}. However, the resulting coexistence conditions are often complex, making it difficult to classify the global structure of positive equilibria. Our objective is to formulate a three-species ratio-dependent IGP model and identify biologically meaningful thresholds that provide a clear classification of its global topological structure of positive equilibria.

Therefore, we propose the following three-species IGP model with ratio-dependent trophic function:
\begin{equation}\label{eq:main-model-no-rescale}
\begin{aligned}
    &x'=rx\left(1-\frac{x}{K}\right)-\frac{c_1xmy}{a_1x+my+z}-\frac{c_2xz}{a_2x+my+z}\\
    &y'=\frac{e_1c_1xmy}{a_1x+my+z}-\frac{c_3yz}{a_3y+z}-d_1y\\
    &z'=\frac{e_2c_2xz}{a_2x+my+z}+\frac{e_3c_3yz}{a_3y+z}-d_2z\\
    &x(0)=x_0\geq0,\quad y(0)=y_0\geq0,\quad z(0)=z_0\geq0
\end{aligned}
\end{equation}
where $r$ represents the intrinsic growth rate and $K$ stands for the carrying capacity of the resource;
$c_i$ is the searching efficiency rate; $m$ is the relative predation efficiency between IG prey and IG predator; $e_i$ is the conversion constant; $a_i$ is the saturation parameter; $d_j$ is the death rate. For biological meaning, we assume $r,K,c_i,e_i,a_i,d_j,m$ to be positive for $i=1,2,3$ and $j=1,2$.

Our work focuses on two issues: extinction and coexistence, with particular emphasis on the organization of positive equilibria. The extinction analysis consists of two parts: 
\Cref{thm:main-extinction} provides sufficient conditions for IG prey or IG predator extinction, and the total extinction, while \Cref{thm:competition-exclusion} 
establishes competitive-exclusion results.

The main difficulty in analyzing positive equilibria is that the three equilibrium equations form a strongly coupled system of rational equations and do not admit a useful explicit solution in the original variables.
To overcome this difficulty, we introduce the biologically motivated change of variables \eqref{eq:change-of-coordinate}, which
separates the resource abundance, the relative contributions of the two
consumers to their total weighted abundance, and the ratio of this
total abundance to the resource.
Under this transformation, the positive-equilibrium problem reduces,
under the generic assumptions, to a one-dimensional curve. 
This reduction allows us to track how positive equilibria vary and determine whether and how the positive-equilibrium branch connects to boundary equilibria, and classify all of its admissible global geometries. 
We first establish the transcritical bifurcation at $E_{xz}$ (see \Cref{tab:boundary-equilibria}) in \Cref{thm:bifurcation-Exz}. We then classify the global structure of the positive-equilibrium branch in
\Cref{thm:structure-branch}.  Its local stability is studied in
\Cref{prop:las-positive} and \Cref{thm:for-m-large}, while
\Cref{thm:saddle-node} identifies conditions under which the branch
undergoes a saddle-node bifurcation.  Finally, numerical examples illustrate nonequilibrium coexistence dynamics, including periodic oscillations and chaotic behavior.



An important parameter of our model is 
the parameter $m$, referred to as the relative predation efficiency \cite{yang2002basic}. 
In \cite{hsu2001rich}, the authors studied a competition model with ratio-dependent structure. The relative predation efficiency parameter was normalized and, therefore, was not treated as an explicit parameter determining the dynamics. In our study, we will retain $m$ explicitly and treat it as a bifurcation parameter, allowing us to examine how changes in relative predation efficiency reorganize the existence and stability of  positive equilibria.


The remainder of this paper is organized as follows. In \Cref{sec:basic-prop}, we establish some basic properties and important quantities of the model. In \Cref{sec:extinction}, we derive several sufficient conditions for different extinction scenarios. In \Cref{sec:competition-exclusion}, we apply the theory of limiting systems to provide the competition exclusion phenomena. In \Cref{sec:coexistence}, we apply bifurcation theory and a change of coordinates to justify the existence and stability of coexistence states. In \Cref{sec:numerical}, we present several numerical results, including multiple attractors, Hopf bifurcation, and chaos.

\section{Preliminary}\label{sec:basic-prop}

It is more convenient for us to rescale some variables in \eqref{eq:main-model-no-rescale}. We first rescale population densities and time:
\begin{align*}
  \overline{x}=\frac{x}{K}, \quad   \overline{y}=\frac{y}{K},\quad \overline{z}=\frac{z}{K},\quad \tau=r t.
\end{align*}
Also, let 
$\overline{a_i}=a_i$, 
$\overline{c_i}=c_i/r$, 
$\overline{e_i}=e_i$, $i=1,2,3$;
$\overline{d_j}=d_j/r$, $j=1,2$.
After changing those variables, we drop the bars from the equations, and we will still use $t$ to represent the time, then we will get the reduced system
\begin{align}
    &x'=x(1-x)-\frac{c_1mxy}{a_1x+my+z}-\frac{c_2xz}{a_2x+my+z}=:X(x,y,z)\label{eq:main-model-1}\\
    &y'=\frac{e_1c_1mxy}{a_1x+my+z}-\frac{c_3yz}{a_3y+z}-d_1y=:Y(x,y,z)\label{eq:main-model-2}\\
    &z'=\frac{e_2c_2xz}{a_2x+my+z}+\frac{e_3c_3yz}{a_3y+z}-d_2z=:Z(x,y,z)\label{eq:main-model-3}
\end{align}
with initial condition
\begin{align}\label{eq:main-model-initial}
    x(0)=x_0\geq0,\quad y(0)=y_0\geq0,\quad z(0)=z_0\geq0.
\end{align}
Here $m$ represents the relative predation efficiency. We explicitly retain this parameter because it is important to our results.

\subsection{Well-posedness}

Set ${\bm F}(x,y,z):=(X(x,y,z),Y(x,y,z),Z(x,y,z))$ and
\begin{align*}
\mathbb{R}^3_{>0}:=\{(x,y,z)\in\mathbb{R}^3:x,y,z>0\}, \quad  
\mathbb{R}^3_{\geq0}=\{(x,y,z)\in\mathbb{R}^3:x,y,z\ge0\}. 
\end{align*}
Note that the vector field ${\bm F}$ is well-defined in $\mathbb{R}^3_{>0}$, but some ratio-dependence terms need extension on the boundary.
Note that these ratio-dependence terms have a continuous and locally Lipschitz extension by assigning the value zero on their singular set. Consequently, the vector field ${\bm F}$ can be viewed as a continuous and locally Lipschitz function defined on $\mathbb{R}^3_{\geq0}$. Therefore, by the theory of existence and uniqueness for ODEs, the solution of \eqref{eq:main-model-1}--\eqref{eq:main-model-3} with initial conditions \eqref{eq:main-model-initial} exists locally in time uniquely.

To ensure biological meaning, we provide positivity and boundedness of the solution under nonnegative initial conditions. First, note that
\[
X(0,y,z)=0,\quad
Y(x,0,z)=0,\quad
Z(x,y,0)=0,
\]
Thus, each coordinate plane is invariant and  $\mathbb{R}_{\geq0}^3$ is positively invariant. 
Write 
\[
x'=xF_1(x,y,z),\quad
y'=yF_2(x,y,z),\quad
z'=zF_3(x,y,z),
\]
together with the local boundedness of $F_i$, which yields that $\mathbb{R}_{>0}^3$ is positively invariant.

We now establish boundedness. Along any nonnegative solution,
\[
x'\leq x(1-x),\quad
y'\leq e_1c_1x-d_1y,\quad
z'\leq e_2c_2x+e_3c_3y-d_2z.
\]
Set
\[
M_x:=\max\{x_0,1\},~
M_y:=\max\left\{y_0,\frac{e_1c_1M_x}{d_1}\right\},~
M_z:=\max\Big\{
z_0,\frac{e_2c_2M_x+e_3c_3M_y}{d_2}
\Big\}.
\]
By comparison, we have
\[
0\leq x(t)\leq M_x,\qquad
0\leq y(t)\leq M_y,\qquad
0\leq z(t)\leq M_z
\]
as long as the solution exists. Therefore, the a priori bounds show that the solution exists globally.


In summary, we have the following result. 

\begin{proposition}\label{prop:well-posedness}
The system \eqref{eq:main-model-1}--\eqref{eq:main-model-3} with initial data \eqref{eq:main-model-initial} admits a unique global nonnegative solution. Moreover, $\mathbb{R}_{\geq0}^3$ and $\mathbb{R}_{>0}^3$ are positively
invariant, and each solution is bounded. 
More precisely, 
$\Omega_1=\{(x,y,z)|\, 0\leq x \leq M_x,\  0\leq y \leq M_y,\  0\leq  z \leq M_z \}$
is positively invariant.
\end{proposition}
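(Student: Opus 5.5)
The proof follows the outline already sketched before the statement, with care taken at the two points where that outline is loose: the extension of the vector field to the boundary, and the positive invariance of the box $\Omega_1$.

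\emph{Step 1: regularity of ${\bm F}$ on $\mathbb{R}^3_{\geq0}$.} Each ratio-dependent term has the form $u v/(\alpha u+\beta v+\gamma w)$ with nonnegative arguments and positive coefficients. Such a term is bounded in absolute value by $\min\{v/\alpha,\,u/\beta\}$-type quantities. It is also homogeneous of degree one. Consequently it extends continuously by zero at the origin of its denominator, and its partial derivatives, being homogeneous of degree zero, are bounded on $\mathbb{R}^3_{\geq0}\setminus\{\text{singular set}\}$. Bounded derivatives on the convex set $\mathbb{R}^3_{\geq0}$ give global Lipschitz continuity of these terms. Hence ${\bm F}$ is locally Lipschitz on $\mathbb{R}^3_{\geq0}$. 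One then extends ${\bm F}$ to a locally Lipschitz field on a neighbourhood, for instance by composing with $(x,y,z)\mapsto(|x|,|y|,|z|)$ inside the fractions. Picard--Lindel\"of then gives local existence and uniqueness.

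\emph{Step 2: invariance of the orthant and of its interior.} Write $x'=xF_1$, $y'=yF_2$, $z'=zF_3$ with $F_i$ bounded on bounded sets. On any interval where the solution stays in a bounded set we obtain $x(t)=x_0\exp\int_0^tF_1$, and similarly for $y$ and $z$. Thus zero components stay zero (uniqueness on the invariant coordinate planes), and positive components stay positive. This yields positive invariance of both $\mathbb{R}^3_{\geq0}$ and $\mathbb{R}^3_{>0}$.

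\emph{Step 3: invariance of $\Omega_1$ and global existence.} This is the step requiring some care. The bounds must be applied in sequence, and on each face of the box the field must point inward:
\begin{itemize}
\item On $x=M_x\ge1$ we have $x'\le x(1-x)\le0$.
\item Given $x\le M_x$, on $y=M_y$ we have $y'\le e_1c_1M_x-d_1M_y\le0$, using $c_1mxy/(a_1x+my+z)\le c_1 x$.
\item Given $x\le M_x$ and $y\le M_y$, on $z=M_z$ we have $z'\le e_2c_2M_x+e_3c_3M_y-d_2M_z\le0$.
\end{itemize}
To make this rigorous despite non-strict inequalities, use the scalar comparison principle for each component in turn. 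From $x'\le x(1-x)$ we get $x(t)\le\max\{x_0,1\}$. Then $y'\le e_1c_1M_x-d_1y$ gives $y(t)\le\max\{y_0,e_1c_1M_x/d_1\}$, since the solution of the linear comparison equation stays below this level. The bound on $z$ follows in the same way. Any solution starting in $\Omega_1$ has data bounded by the corresponding $M$'s, so the solution remains in $\Omega_1$. This a priori bound rules out finite-time blow-up, so the maximal solution is global. Uniqueness holds on every compact interval by Step 1.
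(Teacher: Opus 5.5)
Your proof is correct and follows the paper's argument. You extend the ratio-dependent terms by zero to get a locally Lipschitz field on $\mathbb{R}^3_{\geq0}$, use the factored form $x'=xF_1$, $y'=yF_2$, $z'=zF_3$ for invariance of the closed and open orthants, and apply the sequential comparison inequalities $x'\le x(1-x)$, $y'\le e_1c_1M_x-d_1y$, $z'\le e_2c_2M_x+e_3c_3M_y-d_2z$ to obtain the box bounds and global existence; your homogeneity argument for the Lipschitz bound, the explicit extension to a neighbourhood, and the check that every datum in $\Omega_1$ stays in $\Omega_1$ with the $M$'s held fixed just make rigorous the steps the paper states without detail.
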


\subsection{Boundary Equilibria}

In our model \eqref{eq:main-model-1}--\eqref{eq:main-model-3}, it is easy to see that there are four possible boundary equilibria (under continuous extension); we list them and the corresponding existence conditions in \Cref{tab:boundary-equilibria}. 
Note that $E_0=(0,0,0)$ is referred to as 
the total extinction equilibrium. 
It is a degenerate extinction equilibrium since the ratio-dependent functional responses are singular at the total-extinction state before the continuous extension of the vector field is considered.
In addition,  $E_1=(1,0,0)$ is also degenerate, and is called the consumer-free equilibrium or resource-only equilibrium; $E_{xy}=(x_1,y_1,0)$
is called the IG predator-free equilibrium; 
$E_{xz}=(x_2,0,z_2)$ is called the IG prey-free equilibrium.

\begin{table}
    \centering
    \begin{tabular}{|c|c|}\hline
        Boundary equilibria & Conditions \\\hline\hline
        $E_0=(0,0,0)$ & Always\\
        $E_1=(1,0,0)$ & Always\\
        $E_{xy}=(x_1,y_1,0)=(1-\frac{e_1c_1m-a_1d_1}{e_1m},\frac{e_1c_1m-a_1d_1}{d_1m}x_1,0)$ & $0<e_1c_1m-a_1d_1<e_1m$\\
        $E_{xz}=(x_2,0,z_2)=(1-\frac{e_2c_2-a_2d_2}{e_2},0,\frac{e_2c_2-a_2d_2}{d_2}x_2)$ & $0<e_2c_2-a_2d_2<e_2$\\\hline
    \end{tabular}
    \caption{Boundary equilibria for \eqref{eq:main-model-1}--\eqref{eq:main-model-3}.}
    \label{tab:boundary-equilibria}
\end{table}

\subsection{Threshold values}

In this subsection, we introduce threshold values
to determine the dynamics of \eqref{eq:main-model-1}--\eqref{eq:main-model-3}. The biological meanings are also mentioned. 

We first introduce the invasion rates $\mcI_y^1$. Intuitively, they come from the per-capita growth rates of rare invaders at the resource-only equilibrium (environment) $E_1$. To see this,
from \eqref{eq:main-model-2}, we 
can compute the invasion rate of $y$
\begin{align}\label{I-y}
\mcI_y^1:=\lim\limits_{\substack{y\to0^+\\(x,z)=(1,0)}}\frac{y'}{y}=\frac{e_1c_1m}{a_1}-d_1,
\end{align}
which can be interpreted as the invasion rate of the IG prey $y$ at the
resource-only equilibrium $E_1=(1,0,0)$. Biologically, $\mcI_y^1$ is the
per-capita net growth rate of a rare IG prey population when the resource is near its carrying capacity, and the IG predator is absent. Consequently, $\mcI_y^1>0$ (resp., $\mcI_y^1<0$) means that $y$ may (resp., may not) be able to invade the resource-only state. The other invasion rates can be computed in a similar argument and are listed in \Cref{tab:invasion-rate}.

\begin{table}
    \centering
    \renewcommand{\arraystretch}{1.3}
    \begin{tabular}{|c|c|c|}\hline
        consumer & environment & invasion rate \\\hline\hline
        $y$ & $E_1$ & $\mcI_y^1=\frac{e_1c_1m}{a_1}-d_1$\\
        $z$ & $E_1$ & $\mcI_z^1=\frac{e_2c_2}{a_2}-d_2$\\
        $z$ & $y$ only & $\mcI_z^y=\frac{e_3c_3}{a_3}-d_2$\\\hline 
        $y$ & $E_{xz}$ & $\mcI_y^{xz}=\frac{e_1c_1md_2}{e_2c_2+(a_1-a_2)d_2}-(c_3+d_1)$\\
        $z$ & $E_{xy}$ & $\mcI_z^{xy}=\frac{e_2c_2d_1}{e_1c_1m+(a_2-a_1)d_1}+\mcI_z^y$\\\hline
    \end{tabular}
    \caption{The table of invasion rate. Each row shows that different consumers invade different environments.}
    \label{tab:invasion-rate}
\end{table}

There is an important invasion rate $\mcI_z^y$. This quantity is calculated by
\begin{align*}
    \mcI_z^y=\lim\limits_{\substack{z\to0^+\\x=0,~y>0}}\frac{z'}{z}=\frac{e_3c_3}{a_3}-d_2,
\end{align*}
which represent the per-capita net growth rate of a rare IG predator population at the IG prey-only environment. If $\mcI_z^y>0$, then the IG predator can initially increase under this limiting feeding condition; it does not by itself imply long-term persistence in the absence of the resource; while $\mcI_z^y<0$ does not imply the IG predator will go extinct due to the predation pathway from the IG predator to the resource.

\begin{remark}
The invasion rates $\mathcal{I}_{y}^{1}$ and $\mathcal{I}_{y}^{xz}$ are increasing in $m$; while $\mcI_z^1$ and $\mcI_z^y$ are independent of $m$ and 
$\mathcal{I}_{z}^{xy}$ is decreasing in $m$.
This means that increasing $m$ may raise the per-capita gain of the IG prey from
the resource and facilitate its invasion of $E_{xz}$, but reduces the resource contribution to the growth of the IG predator at $E_{xy}$. This suggests that the effect of $m$ on coexistence is not simply monotone: increasing $m$ neither uniformly promotes nor uniformly suppresses coexistence.
\end{remark}

\section{Extinction}\label{sec:extinction}

In this section, we will provide some sufficient conditions for the extinction of the three species, and the conclusion will be shown in \Cref{thm:main-extinction}.


\subsection{Limiting Systems}\label{subsec:limiting}

We will use the limiting system in the further subsection, so we first discuss several properties of the limiting systems when $y\to0$ and $z\to 0$ as $t\to\infty$. The two limiting systems can be written as 
\begin{center}
\begin{minipage}{.45\textwidth}
\begin{equation}\label{eq:limiting-xy}
\begin{cases}
 \dps  x'=x(1-x)-\frac{c_1mxy}{a_1x+my}\\
  \dps   y'=\frac{e_1c_1mxy}{a_1x+my}-d_1y
\end{cases}
\end{equation}
\end{minipage}\hfill
\begin{minipage}{.45\textwidth}
\begin{equation}\label{eq:limiting-xz}
\begin{cases}
 \dps   x'=x(1-x)-\frac{c_2xz}{a_2x+z}\\
  \dps  z'=\frac{e_2c_2xz}{a_2x+z}-d_2z
\end{cases}  
\end{equation}
\end{minipage}
\end{center}
Since \eqref{eq:limiting-xy} and \eqref{eq:limiting-xz} are limiting systems of the system \eqref{eq:main-model-1}--\eqref{eq:main-model-3}, positivity and boundedness will be preserved. Since these two limiting systems have the same structure, we only provide the proof for \eqref{eq:limiting-xy}, and the properties for \eqref{eq:limiting-xz} can be proved by a similar argument.

Clearly, there are only three possible equilibria for \eqref{eq:limiting-xy}: the equilibria $(0,0)$ and $(1,0)$ always exist, and $(x_1,y_1)$ exist if and only if $0<\mcI_y^1<\frac{e_1m}{a_1}$.

\begin{lemma}\label{lemma:E0-repeller-limiting}~
    \begin{enumerate}
        \item If $c_1<1$, then $(0,0)$ is a uniformly strong repeller for $\{(x,y):x>0,y\geq0\}$ for the limiting system \eqref{eq:limiting-xy}.
        \item If $c_2<1$, then $(0,0)$ is a uniformly strong repeller for $\{(x,z):x>0,z\geq0\}$ for the limiting system \eqref{eq:limiting-xz}.
    \end{enumerate} 
\end{lemma}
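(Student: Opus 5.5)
The idea is that the ratio-dependent predation term is bounded by $c_1 x$ uniformly in $y$, so when $c_1<1$ the resource grows near the origin whatever the consumer does. We argue for \eqref{eq:limiting-xy}; part (2) follows by the same argument with $(c_1,m,a_1)$ replaced by $(c_2,1,a_2)$.

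\emph{Step 1 (an estimate independent of the consumer).} For $x>0$ and $y\ge 0$ we have $\frac{my}{a_1x+my}\le 1$. Hence
\[
x'\ge x\bigl(1-c_1-x\bigr)
\]
along every solution with $x_0>0$, whatever $y$ does.

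\emph{Step 2 (comparison with a logistic equation).} Let $\varepsilon:=1-c_1>0$. The comparison principle gives $x(t)\ge u(t)$, where $u$ solves the logistic equation $u'=u(\varepsilon-u)$ with $u(0)=x_0>0$. Since $u(t)\to\varepsilon$, we get
\[
\liminf_{t\to\infty}x(t)\ge 1-c_1 .
\]
This bound is uniform: it does not depend on the initial data in $\{x>0,\,y\ge0\}$.

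\emph{Step 3 (conclusion).} Any solution starting in $\{(x,y):x>0,y\ge0\}$ therefore eventually stays at distance at least $(1-c_1)/2$ from $(0,0)$. Explicitly, $\liminf_{t\to\infty}\|(x(t),y(t))\|\ge \liminf_{t\to\infty} x(t)\ge 1-c_1=:\eta>0$. This is exactly the definition of a uniformly strong repeller.

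The only subtle point is the extended vector field near the singular set $\{x=y=0\}$, where the predation term is set to zero. This does not cause trouble: the set $\{x>0\}$ is invariant, and on it the quotient is well defined. The inequality in Step 1 still holds on $\{y=0\}$, where the predation term vanishes. No Butler–McGehee or persistence-theory machinery is needed, because Step 1 already gives a uniform bound directly. I expect no real obstacle here; the key observation is the bound $\frac{my}{a_1x+my}\le1$.
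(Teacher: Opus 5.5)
Your proof is correct and is essentially the paper's argument. The paper likewise uses $\frac{c_1mxy}{a_1x+my}\le c_1x$ to get $x'\ge x\bigl((1-c_1)-x\bigr)$, concludes $\liminf_{t\to\infty}x(t)\ge 1-c_1>0$ uniformly in the initial data, and leaves implicit the logistic comparison and the well-definedness on $\{x>0\}$ that you spell out.
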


\begin{proof}
    We only prove (a). For any $x_0>0$ and $y_0\geq0$, it is easy to see that
    \begin{align*}
        x'=x(1-x)-\frac{c_1mxy}{a_1x+my}\geq x((1-c_1)-x),
    \end{align*}
    which provides that $\liminf\limits_{t\to\infty}x(t)\geq 1-c_1>0$. Therefore, the proof is complete.
\end{proof}

\begin{lemma}\label{lemma:E1-saddle-limiting}~
    \begin{enumerate}
        \item If $\mcI_y^1>0$, then the equilibrium $(1,0)$ of the limiting system \eqref{eq:limiting-xy} is a saddle.
        \item If $\mcI_z^1>0$, then the equilibrium $(1,0)$ of the limiting system \eqref{eq:limiting-xz} is a saddle.
    \end{enumerate}
\end{lemma}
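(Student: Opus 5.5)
We will prove (a) by linearizing \eqref{eq:limiting-xy} at $(1,0)$ and computing the Jacobian; part (b) then follows verbatim by replacing $(c_1m,a_1,d_1,e_1)$ with $(c_2,a_2,d_2,e_2)$, since \eqref{eq:limiting-xz} has the same structure.

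\textbf{Step 1: smoothness near $(1,0)$.} The denominator satisfies $a_1x+my\ge a_1/2>0$ for $x$ near $1$ and $y\ge 0$ small. So the vector field is $C^1$ in a neighborhood of $(1,0)$, and linearization is legitimate there. The degeneracy caused by the continuous extension only matters at $(0,0)$, so it plays no role here.

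\textbf{Step 2: the Jacobian.} Write $h(x,y)=\frac{c_1mxy}{a_1x+my}$. Then
\[
h_x(1,0)=0 \quad \text{(because $h$ vanishes on $y=0$)}, \qquad h_y(1,0)=\frac{c_1m}{a_1}.
\]
The second formula follows from $h_y=\frac{c_1mx\cdot a_1x}{(a_1x+my)^2}$. Hence the Jacobian at $(1,0)$ is upper triangular:
\[
J=\begin{pmatrix} -1 & -\frac{c_1m}{a_1}\\ 0 & \frac{e_1c_1m}{a_1}-d_1\end{pmatrix}.
\]
The lower-left entry is zero because $y'=y\cdot(\cdots)$. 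For the lower-right entry, note $\partial_y\big(y\,G(x,y)\big)\big|_{y=0}=G(1,0)$ with $G=\frac{e_1c_1mx}{a_1x+my}-d_1$, which gives $\frac{e_1c_1m}{a_1}-d_1$.

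\textbf{Step 3: eigenvalues.} The eigenvalues of $J$ are $-1<0$ and $\mcI_y^1>0$ by \eqref{I-y}. Opposite signs make $(1,0)$ a hyperbolic saddle. By the stable manifold theorem, its stable manifold is the invariant $x$-axis, on which $x'=x(1-x)$ converges to $1$. Its unstable manifold enters the interior $y>0$.

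There is no real obstacle. The only point needing care is Step 1: confirming that the ratio-dependent term is differentiable at $(1,0)$ despite the singular extension at the origin. This holds because the denominator is bounded away from zero near $(1,0)$.
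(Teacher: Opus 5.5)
Your proof is correct and matches the paper's argument: the Jacobian at $(1,0)$ is upper triangular with eigenvalues $-1$ and $\mcI_y^1>0$, so $(1,0)$ is a saddle, and (b) follows by the same computation. Your check that the denominator $a_1x+my$ stays bounded away from zero near $(1,0)$, which justifies linearizing there, is a small addition that the paper leaves implicit.
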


\begin{proof}
    We only prove (a). The Jacobian matrix at $(1,0)$ is
    \begin{align*}
        J(1,0)=\begin{bmatrix}
            -1 & -\frac{c_1m}{a_1}\\
            0 & \mcI_y^1
        \end{bmatrix},
    \end{align*}
    whose eigenvalues are $-1$ and $\mcI_y^1>0$.
\end{proof}

\begin{lemma}\label{lemma:limiting-no-periodic}~
    \begin{enumerate}
        \item If $c_1<1$, $a_1\leq e_1m$, and $0<\mcI_y^1<\frac{e_1m}{a_1}$, then there is no periodic solution and separatrix cycle for \eqref{eq:limiting-xy}.
        \item If $c_2<1$, $a_2\leq e_2$, and $0<\mcI_z^1<\frac{e_2}{a_2}$, then there is no periodic solution and separatrix cycle for \eqref{eq:limiting-xz}.
    \end{enumerate}
\end{lemma}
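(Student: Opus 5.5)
The natural tool is the Bendixson--Dulac criterion on the open quadrant $\{x>0,\,y>0\}$. A separatrix cycle that touches the boundary will then be ruled out separately using \Cref{lemma:E0-repeller-limiting} and \Cref{lemma:E1-saddle-limiting}. I only treat (a); (b) is identical with $m=1$ and the indices changed.

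\textbf{Step 1 (Dulac function).} Write \eqref{eq:limiting-xy} as $x'=f_1$, $y'=f_2$. The denominator $a_1x+my$ suggests a weight that clears it, so I take
\[
B(x,y)=\frac{a_1x+my}{x^2y}.
\]
This gives
\[
Bf_1=\Big(a_1+\frac{my}{x}\Big)\frac{1-x}{y}-\frac{c_1m}{x},\qquad Bf_2=\frac{e_1c_1m}{x}-\frac{d_1(a_1x+my)}{x^2}.
\]
Differentiating,
\[
\partial_x(Bf_1)=-\frac{m(1-x)}{x^2}-\frac{a_1}{y}-\frac{m}{x}+\frac{c_1m}{x^2}=-\frac{m(1-c_1)}{x^2}-\frac{a_1}{y},
\qquad
\partial_y(Bf_2)=-\frac{d_1m}{x^2}.
\]
Hence
\[
\operatorname{div}(B\,{\bm f})=-\frac{m(1-c_1+d_1)}{x^2}-\frac{a_1}{y}<0
\]
in the open quadrant, because $c_1<1$. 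By Dulac's criterion there is no periodic orbit, and no closed curve made of trajectories, lying entirely in the open quadrant. This also excludes a homoclinic loop at the interior equilibrium $(x_1,y_1)$, which exists because $0<\mcI_y^1<e_1m/a_1$. The hypothesis $a_1\le e_1m$ does not seem to be needed here. I would keep it only if the authors use it elsewhere, for example to control the direction of the flow near the origin.

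\textbf{Step 2 (boundary cycles).} Since the interior contains only one equilibrium, any remaining separatrix cycle must pass through a boundary equilibrium, either $(0,0)$ or $(1,0)$. It cannot pass through $(0,0)$: under $c_1<1$, \Cref{lemma:E0-repeller-limiting} gives $\liminf x(t)\ge 1-c_1$ for every orbit with $x_0>0$, so no orbit in the cycle can have $(0,0)$ as its $\omega$-limit. For $(1,0)$, \Cref{lemma:E1-saddle-limiting} says it is a saddle. Its stable manifold is the invariant $x$-axis, and only its unstable manifold enters the interior. A cycle through $(1,0)$ would therefore have to return along the $x$-axis, so it would contain the segment from $(0,0)$ or from $x>1$. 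The first case is excluded by the previous point. In the second case the cycle would have to leave $(1,0)$ again to the right, but the axis flows toward $(1,0)$ from both sides, which is a contradiction. Hence no separatrix cycle exists.

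\textbf{Main obstacle.} The main obstacle is choosing $B$; the weight $x^{-2}y^{-1}(a_1x+my)$ above resolves it. The boundary case needs some care because the vector field is only a continuous extension at the origin, so I would argue there purely through \Cref{lemma:E0-repeller-limiting} rather than through a linearization.
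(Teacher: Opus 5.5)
Your proof is correct, but it takes a different route from the paper's.

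\textbf{What the paper does.} It uses the Dulac function $1/(xy)$, whose divergence is $-\frac1y+\frac{c_1m(a_1-e_1m)}{(a_1x+my)^2}$. This is the first place it needs $a_1\le e_1m$. It uses that hypothesis again to get $\tr(A_{xy})<0$, so $(x_1,y_1)$ is locally asymptotically stable and cannot lie on a heteroclinic cycle. The boundary equilibria are then handled much as you do: $(0,0)$ repels the region $x>0$, and no interior orbit reaches $(1,0)$.

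\textbf{What your route buys.} Your weight $(a_1x+my)/(x^2y)$ clears the denominator before differentiating. Its divergence $-\frac{m(1-c_1+d_1)}{x^2}-\frac{a_1}{y}$ is negative with no condition relating $a_1$ and $e_1m$; I checked the computation. Green's theorem then rules out every closed chain of orbits in the open quadrant. So you never need the stability type of $(x_1,y_1)$, and the lemma holds without $a_1\le e_1m$. That hypothesis matters only for the local stability of $(x_1,y_1)$, which the paper uses later.

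\textbf{A small repair in Step 2.} The orbits on the positive $y$-axis do converge to $(0,0)$. So Lemma~\ref{lemma:E0-repeller-limiting} alone does not show that no orbit of a cycle arrives at the origin. Your argument for the half-axis $\{x>1,\ y=0\}$ is also muddled. One reason covers both cases: these orbits are unbounded in backward time. On the $y$-axis $y=y_0e^{-d_1t}$, and for $x>1$ on the $x$-axis $x'=x(1-x)<0$. Hence their $\alpha$-limit sets are empty, and they cannot lie on a compact separatrix cycle.
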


\begin{proof}
    For (a), we first show that there is no periodic solution. Take $\varphi(x,y)=\frac{1}{xy}$, then
    \begin{align*}
        \frac{\partial(\varphi x'(t))}{\partial x}+\frac{\partial(\varphi y'(t))}{\partial y}=-\frac{1}{y}+\frac{c_1m(a_1-e_1m)}{(a_1x+my)^2}<0.
    \end{align*}
    Hence, by Bendixon-Dulac criterion, there is no periodic solution. On the other hand, the Jacobian matrix at $(x_1,y_1)$ is
    \begin{align*}
        A_{xy}=\begin{bmatrix}
            1-2x_1-\frac{(a_1\mcI_y^1)^2}{e_1^2c_1m^2} & -\frac{a_1d_1^2}{e_1^2c_1m}\\
            \frac{(a_1\mcI_y^1)^2}{e_1c_1m^2} & -\frac{a_1d_1\mcI_y^1}{e_1c_1m}
        \end{bmatrix},
    \end{align*}
    whose trace and determinant are
    \begin{align*}
        \tr(A_{xy})&=
        a_1\mcI_y^1\left(\frac{e_1c_1m+a_1d_1-e_1d_1m}{e_1^2c_1m^2}\right)-1=-x_1+\frac{a_1d_1\mcI_y^1(a_1-e_1m)}{e_1^2c_1m^2}<0,\\
        \det(A_{xy}))&=\frac{a_1^2d_1\mcI_y^1}{e_1^2c_1m^2}\left(\frac{e_1m}{a_1}-\mcI_y^1\right)>0.
    \end{align*}
    Hence, $(x_1,y_1)$ is locally asymptotically stable. Moreover, since $(0,0)$ is a strong repeller for $\{(x,y):x>0,y\geq0\}$ (\Cref{lemma:E0-repeller-limiting}), and it is easy to see that $(0,0)$ is an attractor for $\{(x,y):x=0,y\geq0\}$, so there is no homoclinic loop of $(0,0)$. For $(1,0)$, since $\mcI_y^1>0$, there is an open neighborhood $U$ of $(1,0)$ such that if $(x(t),y(t))\in U$, then 
    ${y'(t)}/{y(t)}>{\mcI_y^1}/{2}>0$.
    Therefore, there is no $y_0>0$ such that the solution tends to $(1,0)$. In addition, for $x_0>0$ and $y_0=0$, it is easy to see that $(1,0)$ is globally asymptotically stable. Thus, there is no homoclinic loop of $(1,0)$.
    
    Finally, since $(0,0)$ is a uniformly strong repeller and $(x_1,y_1)$ is locally asymptotically stable, there is no heteroclinic cycle. Hence, there is no separatrix cycle on $\{(x,y):x\geq0,y\geq0\}$.
    
    For (b), we can use a similar argument as in (a) to show that there is no periodic solution. The Jacobian matrix of \eqref{eq:limiting-xz} at the positive equilibrium $(x_2,z_2)$ is
    \begin{align*}
        A_{xz}=\begin{bmatrix}
            1-2x_2-\frac{(a_2\mcI_z^1)^2}{e_2^2c_2} & -\frac{a_2d_2^2}{e_2^2c_2}\\
            \frac{(a_2\mcI_z^1)^2}{e_2c_2} & -\frac{a_2d_2\mcI_z^1}{e_2c_2}
        \end{bmatrix},
    \end{align*}
    whose trace and determinant can be expressed as
    \begin{align*}
        \tr(A_{xz})=-x_2+\frac{a_2d_2\mcI_z^1(a_2-e_2)}{e_2^2c_2}<0\quad\textup{and}\quad
        \det(A_{xz})=\frac{a_2^2d_2\mcI_z^1}{e_2^2c_2}\left(\frac{e_2}{a_2}-\mcI_z^1\right)>0.
    \end{align*}
    By a similar argument as in (a), \eqref{eq:limiting-xz} has no separatrix cycle.
\end{proof}

\subsection{Intrinsic Extinction of Consumer}\label{subsec:Extinction-consumer1}

First, we provide the extinction when the consumer can not sustain themselves. 
\begin{lemma}\label{lemma:consumer-cant-sustain}
    If $\mcI_y^1\leq0$, then $\lim\limits_{t\to\infty}y(t)=0$. On the other hand, if $\frac{e_2c_2}{a_2}+\mcI_z^y<0$, then $\lim\limits_{t\to\infty}z(t)=0$.
\end{lemma}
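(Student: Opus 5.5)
The plan is to bound the per-capita growth rates $y'/y$ and $z'/z$ from above by constants, or by a negative function of $y$, using the nonnegativity and boundedness from \Cref{prop:well-posedness}. We may assume $y_0>0$ (resp.\ $z_0>0$), since otherwise $y\equiv0$ (resp.\ $z\equiv0$) by invariance of the coordinate planes. Then $y(t)>0$ (resp.\ $z(t)>0$) for all $t$.

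\emph{The IG prey.} Drop the nonpositive predation term $-c_3yz/(a_3y+z)$ in \eqref{eq:main-model-2} and use $a_1x+my+z\geq a_1x+my$. This gives
\[
\frac{y'}{y}\leq \frac{e_1c_1mx}{a_1x+my}-d_1 .
\]
If $\mcI_y^1<0$, the right-hand side is at most $e_1c_1m/a_1-d_1=\mcI_y^1$. Hence $y(t)\leq y_0e^{\mcI_y^1t}\to0$.

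The borderline case $\mcI_y^1=0$ is the only delicate point, because the crude bound no longer gives decay. Here we use $e_1c_1m=a_1d_1$ to rewrite the bound exactly:
\[
\frac{y'}{y}\leq \frac{a_1d_1x}{a_1x+my}-d_1=-\frac{d_1my}{a_1x+my}\leq -\frac{d_1my}{a_1M_x+my},
\]
where $x\leq M_x$ comes from \Cref{prop:well-posedness}. Thus $y$ is nonincreasing and converges to some $L\geq0$. Suppose $L>0$. Then $y'\leq -d_1mL^2/(a_1M_x+mM_y)<0$ for all $t$, which contradicts the boundedness of $y$ from below. So $L=0$.

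\emph{The IG predator.} From \eqref{eq:main-model-3}, use $a_2x+my+z\geq a_2x$ and $a_3y+z\geq a_3y$. This gives
\[
\frac{z'}{z}\leq \frac{e_2c_2}{a_2}+\frac{e_3c_3}{a_3}-d_2=\frac{e_2c_2}{a_2}+\mcI_z^y<0.
\]
Here the terms with $x=0$ or $y=0$ are interpreted through the continuous extension, which only makes them zero. Therefore $z(t)\leq z_0e^{(e_2c_2/a_2+\mcI_z^y)t}\to0$.

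The only step needing care is the equality case $\mcI_y^1=0$, which is handled by the monotonicity argument above. The $z$-statement requires strict inequality and so needs no such treatment.
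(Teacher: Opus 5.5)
Your proof is correct and follows essentially the paper's route. The paper uses the same key estimate from $e_1c_1m\le a_1d_1$, written as $y'\le-\iota y^2$ with $\iota=d_1m/(a_1M_x+mM_y+M_z)$, which covers $\mcI_y^1<0$ and $\mcI_y^1=0$ at once and gives the explicit rate $y(t)\le y_0/(1+\iota y_0t)$; your case split and contradiction argument are therefore valid but unnecessary, and your direct exponential bound for $z$ is a clean way to supply the paper's omitted ``similar argument''.
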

\begin{proof}
    From \eqref{eq:main-model-2}, we have
    \begin{align*}
        y'\leq y\left(\frac{e_1c_1mx}{a_1x+my+z}-d_1\right)\leq\frac{-d_1y(my+z)}{a_1x+my+z}\leq\frac{-d_1my^2}{a_1M_x+mM_y+M_z}=-\iota y^2,
    \end{align*}
    where $\iota=\frac{d_1m}{a_1M_x+mM_y+M_z}>0$. By using differential inequality, we obtain
    \begin{align*}
        y(t)\leq\frac{y_0}{1+\iota y_0t},\quad t>0.
    \end{align*}
    Hence, $\lim\limits_{t\to\infty}y(t)=0$. The second statement can be proved by a similar argument.
\end{proof}

The conditions $\mcI_y^1\leq0$ and $\frac{e_2c_2}{a_2}+\mcI_z^y<0$ represent the negative ability to sustain the survival of each consumer. The next lemma shows that, to make both consumers extinct, the condition $\frac{e_2c_2}{a_2}+\mcI_z^y<0$ in \Cref{lemma:consumer-cant-sustain} is not sharp enough. When the density of IG prey is low, then the IGP structure provides that IG predator has almost only one pathway to sustain itself. 


\begin{lemma}\label{lemma:consumer-extinction}
    If $\mcI_y^1\leq0$ and $\mcI_z^1<0$, then we have $\lim\limits_{t\to\infty}(y(t),z(t))=(0,0)$ for any $x_0,y_0,z_0\geq0$.
\end{lemma}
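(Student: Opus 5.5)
By \Cref{lemma:consumer-cant-sustain}, the hypothesis $\mcI_y^1\le 0$ already gives $y(t)\to0$ for every nonnegative initial datum, so it remains to prove $z(t)\to0$. The plan is to treat the $z$-equation as asymptotically autonomous: once $y$ is small, the IG predator effectively has only the resource pathway, and $\mcI_z^1<0$ should make that pathway insufficient.

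First I bound the two feeding terms in \eqref{eq:main-model-3}. The intraguild term satisfies
\[
\frac{e_3c_3yz}{a_3y+z}\le e_3c_3y ,
\]
so it tends to $0$ uniformly in $z$ as $t\to\infty$. For the resource term, note that $\frac{e_2c_2x}{a_2x+my+z}\le \frac{e_2c_2}{a_2}$, with the right-hand side independent of $x$. Combining these gives
\[
z'\le z\left(\mcI_z^1+e_3c_3y(t)\right)-\frac{e_2c_2xz(my+z)}{a_2(a_2x+my+z)} .
\]
Now fix $\epsilon>0$ and choose $T$ so large that $e_3c_3y(t)<\epsilon$ for all $t\ge T$. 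Then $z'\le(\mcI_z^1+\epsilon)z$ on $[T,\infty)$. If $\epsilon<-\mcI_z^1$, this yields exponential decay, so $z(t)\to0$.

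Strict negativity of $\mcI_z^1$ is used exactly to absorb the $\epsilon$ coming from the vanishing IG prey. Without that slack, one would need a quadratic argument like the one in \Cref{lemma:consumer-cant-sustain}, in the spirit of
\[
\frac{e_2c_2x}{a_2x+z}-d_2\le -\frac{d_2z}{a_2x+z}.
\]
That route handles the resource term but not the nonvanishing perturbation $e_3c_3y$.

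The main point needing care is the limit $\lim y=0$ in \Cref{lemma:consumer-cant-sustain} itself. That proof uses $y'\le -\iota y^2$, which requires only $\mcI_y^1\le0$ and the bounds of \Cref{prop:well-posedness}. Hence it holds for all nonnegative data, including $z_0=0$ or $x_0=0$. The degenerate cases, where $y_0=0$ or $z_0=0$, are trivial by invariance of the coordinate planes. Finally, the continuous extension at singular points does not affect these inequalities, since each fractional term is bounded by the corresponding extended value.
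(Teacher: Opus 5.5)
There is a genuine error at the central step. From the correct additive bound $\frac{e_3c_3yz}{a_3y+z}\le e_3c_3y$ you pass to $z'\le z\bigl(\mcI_z^1+e_3c_3y(t)\bigr)-\cdots$. That amounts to using $\frac{e_3c_3yz}{a_3y+z}\le e_3c_3y\,z$, which holds only when $a_3y+z\ge1$ and fails exactly in the regime $y,z\to0$ that matters.

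The resulting claim $z'\le(\mcI_z^1+\epsilon)z$ is not merely unjustified; it is false in general. Because the IG predation is ratio-dependent, the per-capita gain $\frac{e_3c_3y}{a_3y+z}=\frac{e_3c_3}{a_3+z/y}$ does not vanish with $y$. It stays of order one as long as $z/y$ stays bounded. Concretely, suppose $\max\{c_1,c_2\}<1$, $\mcI_y^1=0$, $a_3<e_3$ and $c_3<-\mcI_z^1<\frac{e_3c_3}{a_3}$, and take positive initial data. Then $x$ stays bounded away from $0$, and $r=z/y$ satisfies $r'/r=\mcI_z^1+\frac{c_3(e_3+r)}{a_3+r}+o(1)$. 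Hence $r\to\rho$, the positive root of $\frac{c_3(e_3+\rho)}{a_3+\rho}=-\mcI_z^1$. The per-capita growth rate of $z$ then tends to $\mcI_z^1+\frac{e_3c_3}{a_3+\rho}$, which exceeds $\mcI_z^1+\epsilon$ for every small $\epsilon$.

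Your expansion of the resource term also carries a spurious factor $x$. The exact identity is $\frac{e_2c_2xz}{a_2x+my+z}=\frac{e_2c_2}{a_2}z-\frac{e_2c_2z(my+z)}{a_2(a_2x+my+z)}$, but since you discard this nonpositive term the slip is harmless.

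The repair is to keep the intraguild term additive rather than per capita. With $\delta_2:=-\mcI_z^1>0$, the two bounds you already have give $z'\le-\delta_2z+e_3c_3y(t)$, so $z'\le-\delta_2z+\epsilon$ for $t\ge T$. Comparison with the linear equation yields $\limsup_{t\to\infty}z(t)\le\epsilon/\delta_2$, and letting $\epsilon\to0$ gives $z(t)\to0$.

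This is exactly the paper's proof. It writes the same inequality and concludes from the variation-of-constants bound $z(t)\le z_0e^{-\delta_2t}+e^{-\delta_2t}\int_0^te_3c_3y(s)e^{\delta_2s}\,ds$ together with L'H\^opital's rule. Strict negativity of $\mcI_z^1$ is still the essential hypothesis. It enters as linear damping that dominates a vanishing source term, not as slack absorbing a vanishing per-capita rate.
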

\begin{proof}
    From \Cref{lemma:consumer-cant-sustain}, we have $y(t)\to0$ as $t\to\infty$. If $z_0=0$, then $z(t)=0$ for all $t>0$, so we may assume $z_0>0$. Let $\delta_1=-\mcI_y^1\geq0$ and $\delta_2=-\mcI_z^1>0$. From \eqref{eq:main-model-3}, we have
    \begin{align*}
        z'=\frac{e_2c_2xz}{a_2x+my+z}+\frac{e_3c_3yz}{a_3y+z}-d_2z\leq-\delta_2z+e_3c_3y.
    \end{align*}
    Then by differential inequality, we obtain
    \begin{align*}
        z(t)\leq z_0e^{-\delta_2t}+e^{-\delta_2t}\int_0^te_3c_3y(s)e^{\delta_2s}\,ds.
    \end{align*}
    The second term will tend to zero by using L'Hopital's rule. Thus, $z(t)\to0$ as $t\to\infty$.  
    \end{proof}

\Cref{lemma:consumer-extinction} can not ensure whether the solution will tend to $E_0$ or $E_1$ (see \Cref{fig:bistability-of-extinction}). To guarantee the global stability of $E_1$, we need the following lemma, which can be proved by a similar argument as in \Cref{lemma:E0-repeller-limiting}.

\begin{lemma}\label{lemma:E0-repeller}
    If $\max\{c_1,c_2\}<1$, then $(0,0,0)$ is a uniformly strong repeller for $\{(x,y,z):x>0,y\geq0,z\geq0\}$.
\end{lemma}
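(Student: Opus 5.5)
We follow the argument of \Cref{lemma:E0-repeller-limiting} and bound the two predation terms of \eqref{eq:main-model-1} from above by $c_1x$ and $c_2x$. Since all variables are nonnegative,
\[
\frac{c_1mxy}{a_1x+my+z}\le \frac{c_1mxy}{my}=c_1x
\quad\text{and}\quad
\frac{c_2xz}{a_2x+my+z}\le \frac{c_2xz}{z}=c_2x .
\]
Where $y=0$ or $z=0$ the corresponding term is $0$ under the continuous extension, so these bounds still hold. Hence, along any solution with $x_0>0$, $y_0\ge0$, $z_0\ge0$,
\[
x'\ge x\bigl((1-c_1-c_2)-x\bigr).
\]
This crude bound would require $c_1+c_2<1$, which is stronger than the hypothesis $\max\{c_1,c_2\}<1$. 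So the key step is to sharpen the estimate so that only the maximum appears.

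For the sharpening we use a common denominator. Put $a=\min\{a_1,a_2\}$ and $D=a x+my+z$. Then
\[
\frac{c_1mxy}{a_1x+my+z}+\frac{c_2xz}{a_2x+my+z}\le \frac{x\,(c_1my+c_2z)}{ax+my+z}\le \max\{c_1,c_2\}\,x\,\frac{my+z}{ax+my+z}\le \max\{c_1,c_2\}\,x .
\]
Writing $c^*=\max\{c_1,c_2\}$, this gives
\[
x'\ge x\bigl((1-c^*)-x\bigr).
\]
By comparison with the logistic equation $u'=u((1-c^*)-u)$ and $u(0)=x_0>0$, we get $\liminf_{t\to\infty}x(t)\ge 1-c^*>0$. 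The lower bound is independent of the initial data, so the repulsion is uniform on $\{x>0,y\ge0,z\ge0\}$. Hence $(0,0,0)$ is a uniformly strong repeller for this set.

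The only delicate point, and the main obstacle, is justifying the combined estimate. Bounding each fraction separately gives $c_1+c_2$. The fix is to replace each denominator by the smaller common one $ax+my+z$, which is legitimate because $a\le a_i$ and hence $a_ix+my+z\ge ax+my+z$. The other point to check is that the inequalities remain valid on the boundary planes, where the vector field is defined by its continuous extension; this is immediate, since the extended terms are $0$ there.
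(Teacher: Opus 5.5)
Your proof is correct and is essentially the paper's argument. The paper also replaces both denominators by $\min\{a_1,a_2\}x+my+z$, bounds the numerator by $\max\{c_1,c_2\}(my+z)$, and concludes $\liminf_{t\to\infty}x(t)\ge 1-\max\{c_1,c_2\}$ by comparison with the logistic equation.
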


\begin{proof}
    From \eqref{eq:main-model-1}, we have
   \begin{align*}
        \frac{x'}{x}
        \geq\left(1-\frac{\max\{c_1,c_2\}(my+z)}{\min\{a_1,a_2\}x+my+z}\right)-x\geq\left(1-\max\{c_1,c_2\}\right)-x.
    \end{align*}
    Hence, we have $\liminf\limits_{t\to\infty}x(t)\geq1-\max\{c_1,c_2\}>0$.
\end{proof}

The following corollary can be shown by using \Cref{lemma:consumer-extinction} and \Cref{lemma:E0-repeller}.

\begin{corollary}\label{cor:E1-stable}
    If $\max\{c_1,c_2\}<1$, $\mcI_y^1\leq0$, and $\mcI_z^1<0$, then for any $x_0>0$ and $y_0,z_0\geq0$, we have $\lim\limits_{t\to\infty}(x(t),y(t),z(t))=(1,0,0)$.
\end{corollary}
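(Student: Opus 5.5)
The plan is to merge the two preceding lemmas and then handle the resource equation separately. Fix $x_0>0$ and $y_0,z_0\ge 0$. By \Cref{lemma:consumer-extinction}, the hypotheses $\mcI_y^1\le 0$ and $\mcI_z^1<0$ already give $(y(t),z(t))\to(0,0)$. What remains is to show $x(t)\to 1$.

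First I will get a lower bound on $x$. Since $\max\{c_1,c_2\}<1$, the estimate in the proof of \Cref{lemma:E0-repeller} gives
\[
\frac{x'}{x}\ge (1-\max\{c_1,c_2\})-x .
\]
Because $x_0>0$, comparison with the logistic equation yields $\liminf_{t\to\infty}x(t)\ge 1-\max\{c_1,c_2\}=:\eta>0$. So there is a $T$ with $x(t)\ge \eta/2$ for $t\ge T$.

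Next I control the consumption terms for $t\ge T$:
\[
\frac{c_1 m y}{a_1x+my+z}+\frac{c_2 z}{a_2x+my+z}\le \frac{c_1 m y+c_2 z}{\min\{a_1,a_2\}\,\eta/2}=:\varepsilon(t).
\]
Here $\varepsilon(t)\to 0$ since $y,z\to 0$. This gives
\[
x\big((1-\varepsilon(t))-x\big)\le x'\le x(1-x).
\]
The upper inequality gives $\limsup x\le 1$. For the lower one, fix any $\epsilon>0$ and take $t$ large enough that $\varepsilon(t)<\epsilon$. Comparison with the logistic equation $u'=u(1-\epsilon-u)$, started from a positive value, gives $\liminf x\ge 1-\epsilon$. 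Letting $\epsilon\to 0$ yields $x(t)\to 1$, and hence $(x,y,z)\to(1,0,0)$.

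The only delicate point is the consumption estimate. The ratio-dependent terms are singular near $E_0$, so one cannot simply substitute $y=z=0$ into them. The uniform lower bound on $x$ from \Cref{lemma:E0-repeller} keeps the denominators bounded away from zero, and that is what makes $\varepsilon(t)\to 0$ legitimate. The rest is routine logistic comparison.
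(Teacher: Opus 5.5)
Your proof is correct and follows the route the paper intends. The paper gives no written proof; it only says the corollary follows from \Cref{lemma:consumer-extinction} and \Cref{lemma:E0-repeller}. You combine exactly these two lemmas and also supply the step the paper leaves implicit: the logistic comparison with the vanishing perturbation $\varepsilon(t)$, which is legitimate because the uniform lower bound on $x$ keeps the denominators away from zero.
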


From the meaning of the two quantities $\mcI_y^1$ and $\mcI_z^1$, \Cref{lemma:consumer-extinction} and \Cref{cor:E1-stable} provide intuitive cases for the two predators' extinction. Next, we study consumer extinction when the two consumers are sustainable in their consumption of the resource.

\subsection{Exclusion-induced Extinction of Consumer}\label{subsec:Extinction-consumer2}

\begin{lemma}\label{lemma:y-extinct}
    If 
    $a_1\geq a_2$ and $\frac{a_2}{d_2}\mcI_z^1>\frac{a_1}{d_1}\mcI_y^1>0$, then for any $x_0,y_0\geq0$, $z_0>0$, we have $\lim\limits_{t\to\infty}y(t)=0$.
\end{lemma}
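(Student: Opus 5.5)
The plan is a weighted logarithmic Lyapunov-type argument comparing the per-capita growth rates of the two consumers. Set $p:=\frac{a_1}{d_1}\mcI_y^1=\frac{e_1c_1m-a_1d_1}{d_1}$ and $q:=\frac{a_2}{d_2}\mcI_z^1=\frac{e_2c_2-a_2d_2}{d_2}$, so the hypothesis reads $q>p>0$ and $a_1\ge a_2$. Write $S:=my+z$.

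First, drop the IG-predation loss in \eqref{eq:main-model-2} and the intraguild gain in \eqref{eq:main-model-3}. This gives
\begin{align*}
\frac{1}{d_1}\frac{y'}{y}\le \frac{px-S}{a_1x+S},\qquad
\frac{1}{d_2}\frac{z'}{z}\ge \frac{qx-S}{a_2x+S},
\end{align*}
the second using $a_2x+my+z=a_2x+S$. Since $z_0>0$ keeps $z>0$, I will study
\[
V(t)=\frac{1}{d_1}\ln y(t)-\frac{1}{d_2}\ln z(t)
\]
for $y_0>0$; the case $y_0=0$ is trivial. Cross-multiplying, $V'\le -\frac{N}{(a_1x+S)(a_2x+S)}$ with
\[
N:=(qx-S)(a_1x+S)-(px-S)(a_2x+S)=x^2(qa_1-pa_2)+xS\big((q-p)-(a_1-a_2)\big).
\]
Because $q>p$ and $a_1\ge a_2$, the quadratic coefficient satisfies $qa_1-pa_2>0$.

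The remaining sign is controlled by splitting according to whether $px\ge S$.

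\emph{Region $px<S$.} Here $y'/y\le d_1(px-S)/(a_1x+S)<0$ directly, so $y$ is decreasing there.

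\emph{Region $px\ge S$.} If the mixed coefficient is nonnegative, $N>0$ outright. If it is negative, use $S\le px$ to bound
\[
N\ge x^2\big(qa_1-pa_2+p(q-p)-p(a_1-a_2)\big)=x^2\,(q-p)(a_1+p),
\]
which is positive by the hypotheses. In either case $N\ge \kappa x^2$ for some $\kappa>0$. Since $(a_1x+S)(a_2x+S)\le C x^2$ in this region, with $C$ depending only on $a_1,a_2,p$, we obtain $V'\le-\eta<0$ for a constant $\eta>0$.

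To conclude, I will argue by contradiction. Suppose $\limsup y(t)>0$, and use the bounds of \Cref{prop:well-posedness}. In the region $px\ge S$, $V$ decreases at a uniform rate. In the complementary region $y$ itself decreases. Combining the two, either $V\to-\infty$, which forces $y\to0$ because $z\le M_z$, or the trajectory eventually stays in $\{px<S\}$. In the latter case $y$ is eventually monotone decreasing, so $y\to y^*$. If $y^*>0$, then $y'/y\to0$ forces $px-S\to0$ along the $\omega$-limit set. The $\omega$-limit set is compact and invariant, so I will apply the same estimate on it, where $V$ again strictly decreases, contradicting invariance and boundedness.

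The main obstacle is this last bookkeeping: passing from ``$V$ decreases where $y$ might grow'' to an actual limit, particularly near the boundary $px=S$ and near $x=0$ where the denominators degenerate. I would first try to handle this through the $\omega$-limit set argument just described. If that becomes messy, the fallback is to replace $V$ by $\frac{1}{d_1}\ln y-\frac{1-\epsilon}{d_2}\ln z$ with small $\epsilon>0$, which should give $V'\le-\eta'$ uniformly whenever $y$ is bounded away from zero.
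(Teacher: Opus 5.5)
There is a genuine gap. Your two regional estimates are correct, but the concluding step is not mere bookkeeping, and as written it fails.

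\textbf{The dichotomy is unjustified.} The claim ``either $V\to-\infty$ or the orbit eventually stays in $\{px<S\}$'' does not follow. The orbit may switch between the two regions infinitely often. In $\{px\ge S\}$ you control $V$ but not $y$, while in $\{px<S\}$ you control $y$ but not $V$.

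This is a real possibility. Put $k:=(a_1-a_2)-(q-p)$.
\begin{itemize}
\item If $k\le 0$, then $N>0$ everywhere. Simply keeping the IG-predation terms you dropped already gives $V'\le -c_3\min\{1/d_1,\,e_3/(a_3d_2)\}$, so that case is easy.
\item If $k>0$, which the hypotheses allow, then $N<0$ on the whole sector $S/x>(qa_1-pa_2)/k$. This sector lies inside $\{px<S\}$, because $(qa_1-pa_2)-qk=(q-p)(a_2+q)>0$.
\end{itemize}
In the case $k>0$, the exact $V'$ equals $-N/\big((a_1x+S)(a_2x+S)\big)$ minus the discarded predation terms, which are $O(c_3)$. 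So for small $c_3$, $V$ genuinely increases during excursions into that sector and can undo the decrease accumulated in $\{px\ge S\}$.

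\textbf{The sub-case ``eventually in $\{px<S\}$, $y\to y^*>0$'' is also mishandled.} There, $y'/y\to0$ forces $c_3z/(a_3y+z)\to 0$, hence $z\to0$. So the $\omega$-limit set lies in $\{z=0\}$, where $V$ is undefined. You would have to argue along the orbit instead, for instance that $z'/z\ge d_2(qx-S)/(a_2x+S)$ is bounded below by a positive constant near $S=px$.

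\textbf{The fallback does not rescue the case $k>0$.} With weight $1-\epsilon$ on $\frac{1}{d_2}\ln z$, the numerator becomes $\big(pa_2-(1-\epsilon)qa_1\big)x^2+\big(k+\epsilon(q-a_1)\big)xS-\epsilon S^2$. Its discriminant tends to $k^2>0$ as $\epsilon\to0$. So the bound stays positive on a sector, by an amount that does not shrink with $\epsilon$.

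\textbf{The missing idea: tune the exponent so that no region splitting is needed.} Your $V$ is, up to a factor, $\ln(y^{\theta}/z)$ with $\theta=d_2/d_1$. This choice annihilates the $S^2$ coefficient, which is exactly why $N$ is indefinite when $k>0$. The paper instead takes $V=y^{\theta^*}/z$ with
$\theta^*=\frac{e_2c_2-a_2d_2+a_1d_2}{e_1c_1m-a_1d_1+a_2d_1}=\frac{d_2(q+a_1)}{d_1(p+a_2)}$.
Equivalently, this is your construction with weight $\frac{p+a_2}{q+a_1}$ on $\frac{1}{d_2}\ln z$: a definite $\epsilon=\frac{(q-p)+(a_1-a_2)}{q+a_1}$, not an arbitrarily small one.

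With this choice the numerator $Ax^2+BxS+CS^2$ has $A<0$, $C<0$, and discriminant $4\theta^*d_1d_2(a_1-a_2)(p-q)\le 0$. Hence it is nonpositive on the whole quadrant. Keeping the predation terms then gives $\dot V\le-\tilde C V$ globally, with $\tilde C=\min\{e_3c_3/a_3,\theta^*c_3\}$. Therefore $y^{\theta^*}\le M_zV(0)e^{-\tilde Ct}\to0$ directly, with no region splitting and no $\omega$-limit analysis.
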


The proof of \Cref{lemma:y-extinct} is shown in \Cref{appendix :proofs}.

Under the conditions in \Cref{lemma:y-extinct}, the solution might tend to two equilibria: $E_0$ or $E_{xz}$ (see \Cref{fig:bistability-of-extinction}). We will provide a further discussion to determine whether the solution will tend to $E_0$ or $E_{xz}$ in \Cref{sec:competition-exclusion}.

Now, we provide another theorem for the extinction of the IG predator.

\begin{lemma}\label{lemma:z-extinct}
    If $a_2\geq a_1$ and
    \begin{align*}
        \frac{e_2c_2}{e_1c_1m}<\min\left\{\frac{e_3}{a_3},-\frac{\mcI_z^y}{d_1}\right\},
    \end{align*}
    then for any $x_0,z_0\geq0$ and $y_0>0$, we have $\lim\limits_{t\to\infty}z(t)=0$
\end{lemma}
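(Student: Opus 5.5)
The plan is to build a Lyapunov-type quantity that compares the logarithmic growth rate of the IG predator $z$ with a multiple of that of the IG prey $y$. If $z_0=0$, then $z(t)\equiv 0$ and there is nothing to prove, so assume $z_0>0$. Together with $y_0>0$, \Cref{prop:well-posedness} keeps $y(t),z(t)>0$ for all $t\ge 0$. Then all ratio-dependent terms are evaluated where their denominators are positive: $a_3y+z>0$, and $a_ix+my+z>0$ because $my+z>0$. We set
\[
k:=\frac{e_2c_2}{e_1c_1m}>0,\qquad V(t):=\ln z(t)-k\ln y(t),
\]
and aim to show $V'(t)\le -\varepsilon$ for some $\varepsilon>0$ independent of the solution.

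First, compute $V'=z'/z-k\,y'/y$ from \eqref{eq:main-model-2}--\eqref{eq:main-model-3}:
\[
V'=\frac{e_2c_2x}{a_2x+my+z}-\frac{e_2c_2x}{a_1x+my+z}+\frac{e_3c_3y}{a_3y+z}+\frac{kc_3z}{a_3y+z}-d_2+kd_1 .
\]
The resource terms are handled by $a_2\ge a_1$. This gives $a_2x+my+z\ge a_1x+my+z$, so the first two terms combine to something $\le 0$; the choice of $k$ makes the resource coefficients match exactly. The same computation covers the case $x\equiv 0$.

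Second, the IG-predation terms combine into $c_3\,\frac{e_3y+kz}{a_3y+z}$. Because $k<e_3/a_3$, we have $e_3y+kz\le \frac{e_3}{a_3}(a_3y+z)$, so this term is at most $\frac{e_3c_3}{a_3}$. Hence
\[
V'\le \frac{e_3c_3}{a_3}-d_2+kd_1=\mcI_z^y+kd_1=:-\varepsilon .
\]
Here $\varepsilon>0$ exactly because $k<-\mcI_z^y/d_1$, which is the second part of the hypothesis.

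Finally, integrate to get $z(t)\le z_0y_0^{-k}\,y(t)^k e^{-\varepsilon t}\le z_0y_0^{-k}M_y^k e^{-\varepsilon t}$, using the bound $y\le M_y$ from \Cref{prop:well-posedness}. Thus $z(t)\to 0$ exponentially. I expect no real obstacle here. The only delicate points are choosing $k$ so that the two resource terms cancel under $a_2\ge a_1$, and the convex-combination bound for the $c_3$ terms. Both steps are direct once $k$ is fixed as above.
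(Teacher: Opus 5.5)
Your proof is correct and is essentially the paper's argument. The paper uses $V=z/y^{\eta}$ with the same exponent $\eta=\frac{e_2c_2}{e_1c_1m}$, so your $\ln z-k\ln y$ is just its logarithm. It makes the same two estimates (the resource terms cancel to a nonpositive quantity via $a_2\ge a_1$, and the $c_3$ terms are bounded by $\frac{e_3c_3}{a_3}$ via $\eta<\frac{e_3}{a_3}$), arriving at $z(t)\le V(0)M_y^{\eta}e^{(\mcI_z^y+\eta d_1)t}\to 0$.
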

\begin{proof}
    If $z_0=0$, then $z\equiv0$. For $z_0>0$, consider $V=\frac{z}{y^\eta}$, where $\eta=\frac{e_2c_2}{e_1c_1m}$, then 
    \begin{align*}
        \dot{V}(t)&=V(t)\left(\frac{e_2c_2x}{a_2x+my+z}-\frac{e_1c_1m\eta x}{a_1x+my+z}+\frac{e_3c_3-c_3a_3\eta}{a_3y+z}y-d_2+\eta c_3+\eta d_1\right)\\
        &\leq V(t)\left(\frac{e_2c_2x}{a_2x+my+z}-\frac{e_1c_1m\eta x}{a_1x+my+z}+\mcI_z^y-\eta c_3+\eta c_3+\eta d_1\right)\\
        &\leq V(t)\left(\mcI_z^y+\eta d_1\right).
    \end{align*}
    This provides that
        $z(t)\leq V(0)y(t)^\eta e^{(\mcI_z^y+\eta d_1)t}\leq V(0)M_y^\eta e^{(\mcI_z^y+\eta d_1)t}$.
    Therefore, $z(t)\to0$ as $t\to\infty$.
\end{proof}


\subsection{Total Extinction}\label{subsec:total-extinction}

If we use the Logistic function as the natural recruitment rate of the resource with Lotka-Volterra trophic functions in IGP models, no matter how poor the environment is, the model cannot support total extinction for almost all initial conditions \cite{hsu2015analysis}. In this section, we demonstrate that ratio-dependent trophic functions can lead to total extinction 
, we can conclude the following two lemmas.


\begin{lemma}\label{thm:total-extinction-1}
    Assume that $a_2\geq a_1$ and $\frac{e_2c_2}{e_1c_1m}<\min\left\{\frac{e_3}{a_3},-\frac{\mcI_z^y}{d_1}\right\}$. Moreover, if $c_1-1\geq\frac{a_1d_1}{e_1m}$, and $\frac{e_2c_2}{e_1c_1m}\geq1$,
    then for any $x_0,y_0,z_0>0$, we have $\lim\limits_{t\to\infty}(x(t),y(t),z(t))=(0,0,0)$.
\end{lemma}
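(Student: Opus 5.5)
The plan is to reduce the three-dimensional dynamics to the planar limiting system \eqref{eq:limiting-xy}, and then show that, under the stated conditions, the only attractor of that planar system for positive data is $(0,0)$.

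\emph{Step 1: $z\to0$, and $z$ is small relative to $y$.} The first two hypotheses are exactly those of \Cref{lemma:z-extinct}, so $z(t)\to0$ for every $y_0>0$. I would reuse the proof of that lemma, not only its conclusion. With $\eta=\frac{e_2c_2}{e_1c_1m}$ and $\kappa:=-(\mcI_z^y+\eta d_1)>0$, the estimate on $V=z/y^\eta$ gives $z(t)\le V(0)\,y(t)^\eta e^{-\kappa t}$. Hence
\[
\frac{z(t)}{y(t)}\le V(0)\,y(t)^{\eta-1}e^{-\kappa t}\le V(0)M_y^{\eta-1}e^{-\kappa t}\to0 .
\]
This is exactly where the hypothesis $\eta\ge1$ is used. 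The point of this step is that $z/y\to0$ exponentially, which is stronger than $z\to0$. It means the $z$-terms in $x'/x$ and $y'/y$ (for example $\frac{c_3z}{a_3y+z}$ and the perturbation of $\frac{my}{a_1x+my+z}$) vanish uniformly, even when $x$ and $y$ themselves become small. This matters because the ratio-dependent terms are not uniformly continuous near the origin.

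\emph{Step 2: phase portrait of \eqref{eq:limiting-xy}.} The condition $c_1-1\ge\frac{a_1d_1}{e_1m}$ is equivalent to $e_1c_1m-a_1d_1\ge e_1m$, that is $\mcI_y^1\ge \frac{e_1m}{a_1}$. So by \Cref{tab:boundary-equilibria} the interior equilibrium $E_{xy}$ does not exist. The same condition gives $\mcI_y^1>0$, so $(1,0)$ is a saddle by \Cref{lemma:E1-saddle-limiting}, and its stable manifold is the $x$-axis. In the open quadrant there is no equilibrium, so there is no periodic orbit, since such an orbit would have to enclose one. By Poincaré--Bendixson, every interior orbit then has an $\omega$-limit set in $\{(0,0),(1,0)\}$ or made of connecting orbits between them. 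A cycle $(1,0)\to(0,0)\to(1,0)$ is impossible, because $(0,0)$ attracts the $y$-axis and the positive $x$-axis flows to $(1,0)$, not away from $(0,0)$ toward it. Also $(1,0)$ is not the limit of any interior orbit. Hence every interior orbit of \eqref{eq:limiting-xy} tends to $(0,0)$.

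To make this robust near the origin, I would use the ratio $s=x/y$, for which
\[
\frac{s'}{s}=1-x+d_1-\frac{c_1m(1+e_1s)}{a_1s+m}\le 1+d_1-\frac{c_1m(1+e_1s)}{a_1s+m}.
\]
This should show that $s$ is eventually driven into a bounded region and that $x$ decays. Once $x\to0$, we get $y'\le y\bigl(\frac{e_1c_1m x}{my}-d_1\bigr)=e_1c_1x-d_1y$, and therefore $y\to0$ as well.

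\emph{Step 3: transfer to the full system.} Since $z\to0$ and $z/y\to0$, the $(x,y)$-equations of the full system form an asymptotically autonomous system with limit \eqref{eq:limiting-xy}. Here the convergence holds in the ratio variables $(x,\,s,\,z/y)$, not just in $(x,y,z)$. I would apply the Markus--Thieme theory: the $\omega$-limit set of $(x(t),y(t))$ is compact, invariant and chain-transitive for \eqref{eq:limiting-xy}. By Step 2 the only chain-transitive sets are $\{(0,0)\}$ and $\{(1,0)\}$; there is no cycle of equilibria. To exclude $(1,0)$, I would use \Cref{lemma:E1-saddle-limiting} as in the proof of \Cref{lemma:limiting-no-periodic}. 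Near $(1,0)$ with $z$ small we have $y'/y>\mcI_y^1/2>0$, so a solution with $y_0>0$ cannot converge to $(1,0)$. The Butler--McGehee lemma then rules out $\omega$-limit sets that contain $(1,0)$ together with other points. Hence $(x,y)\to(0,0)$, and together with Step 1 the solution tends to $(0,0,0)$.

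\emph{Expected main obstacle.} The delicate part is Step 3 near the origin, where the vector field is only continuous. Chain-transitivity arguments need uniform convergence of the perturbed field to \eqref{eq:limiting-xy}, and near $(0,0)$ this can only be obtained in the blown-up variables $(x,s,z/y)$. If the abstract route becomes messy, my first fallback is a direct differential inequality for $s=x/y$ in the full system, absorbing the $z$-terms as $O(e^{-\kappa t})$ errors. The aim would be to prove directly that $x\to0$, and then conclude $y\to0$ from $y'\le e_1c_1x-d_1y$.
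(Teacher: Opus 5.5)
Your Step~1 is the paper's first move: the bound $z/y\le V(0)M_y^{\eta-1}e^{-\kappa t}$ from the proof of \Cref{lemma:z-extinct} is exactly where $\eta\ge1$ is used. Your final step, $y'\le e_1c_1x-d_1y$ once $x\to0$, also matches the paper.

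\textbf{The gap is in Steps~2--3.} Your claim that \eqref{eq:limiting-xy} has no cycle of equilibria is false.
\begin{itemize}
\item The segment $\{(x,0):0<x<1\}$ is an orbit running from $(0,0)$ to $(1,0)$.
\item The interior branch of the unstable manifold of the saddle $(1,0)$ is an interior orbit. By the very conclusion you want, it tends to $(0,0)$.
\end{itemize}
So $(0,0)\to(1,0)\to(0,0)$ is a heteroclinic cycle. It is internally chain transitive, and it is precisely alternative (iii) of the Poincar\'e--Bendixson/Thieme trichotomy quoted in the paper.

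Neither of your tools excludes it. The fact that no interior orbit converges to $(1,0)$ only rules out $\omega=\{(1,0)\}$. Butler--McGehee only forces $\omega$ to contain a point of $W^s(1,0)\setminus\{(1,0)\}$, i.e.\ a point of the positive $x$-axis, and this cycle does exactly that. Hence your argument never establishes $x\to0$. Reading the hypothesis $c_1-1\ge\frac{a_1d_1}{e_1m}$ only as ``$E_{xy}$ does not exist'' supplies no mechanism that does.

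The loss of uniformity near the origin that you flag is not the real difficulty. The $z$-perturbations of the $(x,y)$-field are uniformly $O(z)$; for example,
\[
\frac{c_1mxyz}{(a_1x+my)(a_1x+my+z)}\le\frac{c_1}{4a_1}z .
\]

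\textbf{The missing idea is a global monotone quantity.} The right one is $R=x/y^{\theta}$ with $\theta=\frac{a_1}{e_1m}$, not $s=x/y$. With this exponent the two $c_1$-terms combine exactly:
\[
\frac{c_1my+\theta e_1c_1mx}{a_1x+my+z}=c_1-\frac{c_1z}{a_1x+my+z}.
\]
Therefore
\[
\frac{R'}{R}\le-\delta-x+\Bigl(\frac{c_1}{m}+\frac{\theta c_3}{a_3}\Bigr)\frac{z}{y},\qquad \delta=c_1-1-\theta d_1\ge0 .
\]
This is where $c_1-1\ge\frac{a_1d_1}{e_1m}$ really enters.

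Since $z/y$ is exponentially small by your Step~1, $R(t)\le C\,R(0)\exp\bigl(-\delta t-\int_0^t x\bigr)$. Two cases follow:
\begin{itemize}
\item If $\delta>0$ or $\int_0^\infty x=\infty$, then $x=Ry^\theta\le RM_y^\theta\to0$.
\item If $\delta=0$ and $\int_0^\infty x<\infty$, then $x\to0$ follows from $x'\le x$.
\end{itemize}
This is the paper's proof, and it needs no limiting-system theory. The resulting bound $x\le C y^{\theta}$ is also what keeps the orbit away from the heteroclinic cycle above, since the orbit cannot approach the $x$-axis away from the origin.

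Your fallback with $s=x/y$ does not substitute for this. The expression $s'/s=1+d_1-x-\frac{c_1m(1+e_1s)}{a_1s+m}$ changes sign in general under the stated hypotheses. It takes the sign-definite form $-\delta-x$ (up to the $z$-terms) only when $a_1=e_1m$, i.e.\ when $s$ coincides with $R$.
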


\begin{proof}
    Consider $R(t)=\frac{x(t)}{y^\theta(t)}$, where $\theta=\frac{a_1}{e_1m}$. Then
    \begin{align*}
        R'(t)&=\frac{x}{y^\theta}\left(-(c_1-1-\theta d_1)-x+\frac{c_1z}{a_1x+my+z}-\frac{c_2z}{a_2x+my+z}+\frac{\theta c_3z}{a_3y+z}\right)\\
        &\leq\frac{x}{y^\theta}\left(-(c_1-1-\theta d_1)-x+\frac{z}{y}\left(\frac{c_1}{m}+\frac{\theta c_3}{a_3}\right)\right).
    \end{align*}
    Recall the proof of \Cref{lemma:z-extinct} and $\eta=\frac{e_2c_2}{e_1c_1m}\geq1$, $V(t)=\frac{z(t)}{y(t)^\eta}$, we have
    \begin{align*}
        R'(t)&\leq R(t)\left(-(c_1-1-\theta d_1)-x+\left(\frac{c_1}{m}+\frac{\theta c_3}{a_3}\right)V(0)M_y^{\eta-1}e^{(\mcI_z^y+\eta d_1)t}\right)\\
        &=:R(t)\left(-\delta-x+Me^{(\mcI_z^y+\eta d_1)t}\right),
    \end{align*}
    where $\delta=c_1-1-\theta d_1\geq0$ and $M=\left(\frac{c_1}{m}+\frac{\theta c_3}{a_3}\right)V(0)M_y^{\eta-1}$. Since $\mcI_z^y+\eta d_1<0$, by using the comparison, we obtain
    \begin{align*}
        R(t)\leq R(0)\exp\left(-\delta t-\int_0^tx(s)\,ds-\frac{2M}{\mcI_z^y+\eta d_1}\right)
    \end{align*}
    Now, if either (i) $\delta\neq0$ or (ii) $\delta=0$ and $\int_0^\infty x(s)\,ds=\infty$, then $x(t)\to0$ as $t\to\infty$. If $\delta=0$ and $\int_0^\infty x(s)\,ds<\infty$, then from \eqref{eq:main-model-1}, we have $x'\leq x$, and then
    \begin{align*}
        \int_{t-1}^tx(s)\,ds\geq x(t)\int_{t-1}^te^{s-t}\,ds=x(t)(1-e^{-1}),\quad t\geq1.
    \end{align*}
    Since $\int_0^\infty x(s)\,ds<\infty$, the left hand side will tend to $0$, that is, $x(t)\to0$ as $t\to\infty$. Furthermore, from the equation of $y$, we see that $y(t)\to0$ and thus $z(t)\to0$ as $t\to\infty$ as in the proof of Lemma~\ref{lemma:z-extinct}.
    Therefore, $(x(t),y(t),z(t))\to(0,0,0)$ as $t\to\infty$.
\end{proof}


By a similar argument and using  \Cref{lemma:y-extinct} instead of \Cref{lemma:z-extinct}, we can get the following result.

\begin{lemma}\label{thm:total-extinction-2}
    Assume that $a_1\geq a_2$ and $\frac{a_2}{d_2}\mcI_z^1>\frac{a_1}{d_1}\mcI_y^1>0$.
    If $c_2-1\geq\frac{a_2d_2}{e_2}$, and $a_2\mcI_z^1+a_1d_2\leq a_1\mcI_y^1+a_2d_1$,
    then we have $\lim\limits_{t\to\infty}(x(t),y(t),z(t))=(0,0,0)$ for any $x_0,y_0,z_0>0$.
\end{lemma}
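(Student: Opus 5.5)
The plan is to adapt the proof of \Cref{thm:total-extinction-1}, swapping the roles of $y$ and $z$. The resource will be compared against the IG predator $z$ rather than the IG prey.

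\textbf{Step 1: the quotient $R$.} Set $R(t)=x(t)/z(t)^{\theta}$ with $\theta=\frac{a_2}{e_2}$. Using \eqref{eq:main-model-1} and \eqref{eq:main-model-3}, and writing $D_i=a_ix+my+z$, we get
\begin{align*}
\frac{R'}{R}=1-x-\frac{c_1my}{D_1}-\frac{c_2z+c_2a_2x}{D_2}-\frac{\theta e_3c_3y}{a_3y+z}+\theta d_2 .
\end{align*}
The choice of $\theta$ makes $\theta e_2c_2=c_2a_2$, so the $z$-term and the $x$-term over $D_2$ combine into $-c_2+\frac{c_2my}{D_2}$. Dropping the negative terms then gives
\begin{align*}
\frac{R'}{R}\le -\delta - x + c_2 m\,\frac{y}{z},\qquad \delta:=c_2-1-\frac{a_2d_2}{e_2}\ge 0 .
\end{align*}

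\textbf{Step 2: exponential decay of $y/z$.} This is the step I expect to be the main obstacle. I will reopen the proof of \Cref{lemma:y-extinct} in the appendix. I expect it to use a functional of the form $V=y/z^{\eta}$, with $\eta$ determined by the conversion rates and saturation constants, and to show $V'\le -\kappa V$ for some $\kappa>0$ under $a_1\ge a_2$ and $\frac{a_2}{d_2}\mcI_z^1>\frac{a_1}{d_1}\mcI_y^1>0$.

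The extra hypothesis $a_2\mcI_z^1+a_1d_2\le a_1\mcI_y^1+a_2d_1$ unfolds to
\begin{align*}
e_2c_2+(a_1-a_2)d_2\le e_1c_1m+(a_2-a_1)d_1 ,
\end{align*}
and I expect this to be exactly the condition $\eta\ge 1$. Granting that,
\begin{align*}
\frac{y}{z}=V z^{\eta-1}\le V(0)M_z^{\eta-1}e^{-\kappa t}.
\end{align*}
If the appendix argument instead produces a different exponent, I would redo the computation of $(y/z^{\eta})'$ directly and choose $\eta$ to make the cross terms sign-definite, aiming for the same exponential bound.

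\textbf{Step 3: $x\to 0$.} Inserting the bound from Step 2 gives $R'\le R\,(-\delta-x+Me^{-\kappa t})$. Integrating yields $R(t)\le R(0)\exp\bigl(-\delta t-\int_0^t x\,ds+M/\kappa\bigr)$. Since $z\le M_z$, we have $x\le R\,M_z^{\theta}$. Therefore $x\to0$ if $\delta>0$, or if $\delta=0$ and $\int_0^\infty x\,ds=\infty$. In the remaining case, $\delta=0$ with $\int_0^\infty x\,ds<\infty$, I will reuse the estimate $\int_{t-1}^t x\,ds\ge(1-e^{-1})x(t)$, which follows from $x'\le x$, exactly as in \Cref{thm:total-extinction-1}.

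\textbf{Step 4: $y,z\to0$.} By \Cref{lemma:y-extinct}, $y\to0$. From \eqref{eq:main-model-3}, $z'\le e_2c_2x+e_3c_3y-d_2z$, and both $x$ and $y$ tend to $0$. The variation-of-constants and L'Hôpital argument of \Cref{lemma:consumer-extinction} then gives $z\to0$, completing the proof.
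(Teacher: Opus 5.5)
Your proposal is correct and matches the paper's intended argument: \Cref{thm:total-extinction-1} redone with $R=x/z^{a_2/e_2}$ and with \Cref{lemma:y-extinct} used in place of \Cref{lemma:z-extinct}. Your guess in Step 2 is confirmed by the appendix, which uses $V=y^{\theta^*}/z$ with $\theta^*=\frac{e_2c_2-a_2d_2+a_1d_2}{e_1c_1m-a_1d_1+a_2d_1}$ and proves $V'\le-\tilde{C}V$; so your $\eta$ equals $1/\theta^*$, and the extra hypothesis is exactly $\theta^*\le 1$, i.e.\ $\eta\ge1$.
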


\subsection{Conclusion of extinction}

From the discussions in Section~\ref{subsec:Extinction-consumer1}-- Section~\ref{subsec:total-extinction}, we obtain the following conclusion.

\begin{theorem}\label{thm:main-extinction}
    Let $\mcI_y^1$, $\mcI_z^1$, and $\mcI_z^y$ be defined in \Cref{tab:invasion-rate}, respectively. Then the following hold.
    \begin{enumerate}
        \item \textup{(Intrinsic extinction of one consumer, \Cref{lemma:consumer-cant-sustain})} If $\mcI_y^1\leq0$, then $\lim\limits_{t\to\infty}y(t)=0$. On the other hand, if $\frac{e_2c_2}{a_2}+\mcI_z^y<0$, then $\lim\limits_{t\to\infty}z(t)=0$.
        \item {\rm(Intrinsic extinction of two consumer, \Cref{lemma:consumer-extinction})} If $\mcI_y^1\leq0$ and $\mcI_z^1<0$, then for any $x_0,y_0,z_0\geq0$, we have $\lim\limits_{t\to\infty}(y(t),z(t))=(0,0)$.
        
        \item {\rm(Exclusion-induced extinction of one consumer, \Cref{lemma:y-extinct} and \Cref{lemma:z-extinct})} If the conditions in \Cref{lemma:y-extinct} hold, then for any $x_0,y_0\geq0$, $z_0>0$, $\lim\limits_{t\to\infty}y(t)=0$. On the other hand, if the conditions in \Cref{lemma:z-extinct} hold, then for any $x_0,z_0\geq0$, $y_0>0$, we have $\lim\limits_{t\to\infty}z(t)=0$.
        
        \item {\rm(Total extinction, \Cref{thm:total-extinction-2} and \Cref{thm:total-extinction-1})} If either the conditions in \Cref{thm:total-extinction-2} or \Cref{thm:total-extinction-1} hold, then for any $x_0,y_0,z_0>0$, we have $\lim\limits_{t\to\infty}(x(t),y(t),z(t))=(0,0,0)$.
    \end{enumerate}
\end{theorem}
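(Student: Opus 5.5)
Theorem~\ref{thm:main-extinction} collects results already established, so the proof is largely an assembly. Parts (a) and (b) are verbatim restatements of \Cref{lemma:consumer-cant-sustain} and \Cref{lemma:consumer-extinction}. Part (c) is the conjunction of \Cref{lemma:y-extinct} and \Cref{lemma:z-extinct}, whose hypotheses and conclusions match the statement exactly. Part (d) follows from \Cref{thm:total-extinction-1} and \Cref{thm:total-extinction-2}. I would therefore write the proof as a short citation argument, checking for each item that the quantifiers on initial data agree: $x_0,y_0,z_0\ge0$ in (a), (b); $z_0>0$ or $y_0>0$ in (c); all positive in (d).

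The only genuinely unproved ingredients are \Cref{lemma:y-extinct} (deferred to the appendix) and \Cref{thm:total-extinction-2} (asserted ``by a similar argument''). For \Cref{lemma:y-extinct}, I would mirror the proof of \Cref{lemma:z-extinct} with the roles of $y$ and $z$ swapped. Consider $W=y/z^{\kappa}$ with $\kappa=\frac{e_1c_1m}{e_2c_2}$, or a variant tuned to the hypothesis $\frac{a_2}{d_2}\mcI_z^1>\frac{a_1}{d_1}\mcI_y^1$. Compute $W'/W$. The resource terms combine as
\[
\frac{e_1c_1mx}{a_1x+my+z}-\frac{\kappa e_2c_2x}{a_2x+my+z},
\]
which is $\le0$ precisely when $a_1\ge a_2$. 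The IG predation term $-c_3z/(a_3y+z)$ helps $y$'s decline, while $-\kappa e_3c_3y/(a_3y+z)\le0$. The remainder $-d_1+\kappa d_2$ must be negative, and this is where the invasion-rate inequality should enter. If the naive exponent does not close, I would instead choose $\kappa=d_1/d_2$ and bound the resource terms using $a_1\ge a_2$ together with the inequality between the weighted invasion rates. This yields $W(t)\le W(0)e^{-\epsilon t}$, and boundedness of $z$ then gives $y\to0$.

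For \Cref{thm:total-extinction-2}, I would copy the proof of \Cref{thm:total-extinction-1}. Take $R=x/z^{\theta}$ with $\theta=a_2/e_2$. The assumption $c_2-1\ge\frac{a_2d_2}{e_2}$ makes the constant part $-(c_2-1-\theta d_2)\le0$. The leftover terms are proportional to $y/z$, which decays exponentially by the quantitative form of \Cref{lemma:y-extinct}; the condition $a_2\mcI_z^1+a_1d_2\le a_1\mcI_y^1+a_2d_1$ plays the role of $\eta\ge1$. The same integral dichotomy ($\delta>0$, or $\delta=0$ with $\int x=\infty$ or $<\infty$) then forces $x\to0$, hence $y,z\to0$.

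The main obstacle is extracting an exponential decay rate for $y/z^{\kappa}$ in \Cref{lemma:y-extinct} with the right exponent, so that it feeds cleanly into the $R$-estimate. Matching the precise algebraic hypotheses may require choosing $\kappa$ differently than the first guess.
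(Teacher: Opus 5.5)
Your top-level argument matches the paper. \Cref{thm:main-extinction} is only a summary of \Cref{lemma:consumer-cant-sustain}, \Cref{lemma:consumer-extinction}, \Cref{lemma:y-extinct}, \Cref{lemma:z-extinct}, \Cref{thm:total-extinction-1} and \Cref{thm:total-extinction-2}, and the paper proves nothing further there.

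\textbf{The gap is in your sketch for \Cref{lemma:y-extinct}.} Both exponents you propose try to make the resource part and the mortality part of $W'/W$ nonpositive \emph{separately}. Each then needs $\frac{e_1c_1m}{d_1}\le\frac{e_2c_2}{d_2}$:
\begin{itemize}
\item for $\kappa=\frac{e_1c_1m}{e_2c_2}$ this is the condition $-d_1+\kappa d_2\le0$;
\item for $\kappa=d_1/d_2$ it is the sign of the $\omega$-coefficient of $\frac{e_1c_1mx}{a_1x+\omega}-\kappa\frac{e_2c_2x}{a_2x+\omega}$.
\end{itemize}
The hypothesis $\frac{a_2}{d_2}\mcI_z^1>\frac{a_1}{d_1}\mcI_y^1$ only says $\frac{e_2c_2}{d_2}-\frac{e_1c_1m}{d_1}>a_2-a_1$, which is strictly weaker when $a_1>a_2$. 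For example, take $a_1=2$, $a_2=1$, $d_1=d_2=1$, $e_1c_1m=5/2$, $e_2c_2=2$; this satisfies every hypothesis of the lemma.
\begin{itemize}
\item For $\kappa=5/4$ the mortality part is $+1/4$, while the resource part tends to $0$ as $\omega/x\to\infty$.
\item For $\kappa=1$ the mortality part is $0$, and the resource part $\frac{5x/2}{2x+\omega}-\frac{2x}{x+\omega}$ is positive once $\omega>3x$.
\end{itemize}
In both cases the only remaining negative term is the predation term, of size $O(c_3)$, which the hypotheses do not control. So neither choice closes.

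\textbf{The missing idea is to combine the terms, not split them.} Take $V=y^{\theta}/z$, i.e.\ $\kappa=1/\theta$. Putting the resource and mortality terms over $(a_1x+\omega)(a_2x+\omega)$ gives a quadratic form $P(x,\omega;\theta)=A(\theta)x^2+B(\theta)x\omega+C(\theta)\omega^2$. The paper chooses $\theta^*=\frac{a_2\mcI_z^1+a_1d_2}{a_1\mcI_y^1+a_2d_1}$, for which:
\begin{itemize}
\item $A(\theta^*)<0$ and $C(\theta^*)<0$;
\item the discriminant equals $4\theta^*d_1d_2(a_1-a_2)\bigl(\frac{a_1}{d_1}\mcI_y^1-\frac{a_2}{d_2}\mcI_z^1\bigr)\le0$.
\end{itemize}
Hence $P\le0$, and the predation terms alone supply the uniform rate $\min\{e_3c_3/a_3,\theta^*c_3\}$. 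In the example above, $\theta^*=2$ and $P=-x^2-\omega^2$.

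\textbf{This exponent is also what makes your plan for \Cref{thm:total-extinction-2} work.} Your computation with $R=x/z^{a_2/e_2}$ is correct, and the leftover term is bounded by $c_2my/z$. Writing $y/z=(y^{\theta^*}/z)\,y^{1-\theta^*}$, this decays exponentially when $\theta^*\le1$. That is exactly the hypothesis $a_2\mcI_z^1+a_1d_2\le a_1\mcI_y^1+a_2d_1$. With your $\kappa=\frac{e_1c_1m}{e_2c_2}$, the analogous requirement $\kappa\ge1$ would instead be $e_1c_1m\ge e_2c_2$, a different condition. So your reading of that hypothesis as ``playing the role of $\eta\ge1$'' is only correct once $\theta^*$ is used.
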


\section{Competition Exclusion}\label{sec:competition-exclusion}

In addition to the extinction of both consumers, an important long-term outcome of the system is competitive exclusion, in which one consumer persists while the other is driven to extinction. In an intraguild predation system, such exclusion is determined not only by competition for the shared resource but also by the direct predation of the IG predator on the IG prey. In this section, we derive sufficient conditions such that one of the consumers goes extinct and the other one survives.

\subsection{Local Stability for Boundary Equilibria}\label{subsec:local-stability}

First, we study the local behavior at $E_{xy}$ and $E_{xz}$. The Jacobian matrix at $E_{xy}$ can be written as
\begin{align}\label{eq:Jacobian-xy}
    J(E_{xy})=\begin{bmatrix}
        A_{xy} & u_{xy}\\
        0 & \mcI_z^{xy}
    \end{bmatrix},
\end{align}
where $A_{xy}$ is mentioned in \Cref{lemma:limiting-no-periodic} and for some vector $u_{xy}$. Then we can obtain the following lemma.

\begin{lemma}\label{thm:Exy-las}
    Assume that $E_{xy}$ exists; that is,
    $0<\mcI_y^1<\frac{e_1m}{a_1}$. 
    If $\mathcal{I}_z^{xy}<0$ and  $a_1\leq e_1m$, then $E_{xy}$ is locally asymptotically stable. On the other hand, if $\mathcal I_z^{xy}>0$,
then $E_{xy}$ is unstable. 
\end{lemma}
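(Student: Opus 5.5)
The plan is to read off the spectrum of $J(E_{xy})$ from its block upper-triangular form \eqref{eq:Jacobian-xy}. The eigenvalues of $J(E_{xy})$ are the two eigenvalues of the $2\times2$ block $A_{xy}$ together with the scalar $\mcI_z^{xy}$. Two things therefore need checking: that the lower-left block really vanishes, and that the $(3,3)$ entry equals the invasion rate $\mcI_z^{xy}$ of \Cref{tab:invasion-rate}.

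\textbf{Structure of the Jacobian.} At $z=0$ we have $Z(x,y,0)=0$ for all $x,y$, so $\partial_x Z=\partial_y Z=0$ there. For the diagonal entry, $Z=zF_3$ gives
\[
\partial_z Z\big|_{z=0}=F_3(x_1,y_1,0)=\frac{e_2c_2x_1}{a_2x_1+my_1}+\frac{e_3c_3}{a_3}-d_2 .
\]
Here the limit $z\to0^+$ of $y/(a_3y+z)$ is $1/a_3$ because $y_1>0$.

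Next, the upper-left block coincides with the Jacobian of the limiting system \eqref{eq:limiting-xy} at $(x_1,y_1)$. This holds because $X$ and $Y$ restricted to $z=0$ are exactly the right-hand sides of \eqref{eq:limiting-xy}, so the block is $A_{xy}$ from \Cref{lemma:limiting-no-periodic}.

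To identify $F_3(x_1,y_1,0)$ with $\mcI_z^{xy}$, I use the equilibrium relation $\frac{e_1c_1m x_1}{a_1x_1+my_1}=d_1$. Writing $y_1=\kappa x_1$ with $\kappa=\frac{e_1c_1m-a_1d_1}{d_1m}$, we get $a_2x_1+my_1=x_1\bigl(a_2+\frac{e_1c_1m-a_1d_1}{d_1}\bigr)$. Hence the first term equals $\frac{e_2c_2d_1}{e_1c_1m+(a_2-a_1)d_1}$, and adding $\mcI_z^y$ gives exactly the table entry.

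\textbf{Stability of $A_{xy}$.} In the proof of \Cref{lemma:limiting-no-periodic}, the computations of $\tr(A_{xy})$ and $\det(A_{xy})$ only used the existence condition $0<\mcI_y^1<e_1m/a_1$ and the sign condition $a_1\le e_1m$. The hypothesis $c_1<1$ was used only for the Dulac and repeller parts.

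With these assumptions, $\det(A_{xy})>0$ follows directly from the displayed formula. The trace is $-x_1+\frac{a_1d_1\mcI_y^1(a_1-e_1m)}{e_1^2c_1m^2}$, and it is negative since $x_1>0$ and the second term is $\le 0$. Thus both eigenvalues of $A_{xy}$ have negative real parts.

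\textbf{Conclusion.} If $\mcI_z^{xy}<0$, all three eigenvalues have negative real parts, so $E_{xy}$ is locally asymptotically stable by linearization. If $\mcI_z^{xy}>0$, there is a positive real eigenvalue, so $E_{xy}$ is unstable. No restriction on $a_1$ is needed in this case, because instability follows from a single eigenvalue with positive real part.

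\textbf{Main obstacle.} The only delicate point is justifying linearization, since $F$ is only the continuous extension near the boundary. However, $E_{xy}$ has $x_1,y_1>0$, and all the denominators $a_1x+my+z$, $a_2x+my+z$ and $a_3y+z$ are bounded away from zero near $E_{xy}$. So $F$ is smooth ($C^1$) in a full neighborhood of $E_{xy}$ in $\mathbb{R}^3$, and the standard linearization theorem applies.
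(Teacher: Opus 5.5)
Your proof is correct and follows the route the paper implicitly takes. You use the block upper-triangular form \eqref{eq:Jacobian-xy}, bound the spectrum of $A_{xy}$ with the trace/determinant formulas from \Cref{lemma:limiting-no-periodic}, and take $\mcI_z^{xy}$ as the remaining eigenvalue; your checks that the $(3,3)$ entry equals $\mcI_z^{xy}$ and that the vector field is $C^1$ near $E_{xy}$ fill in steps the paper leaves unstated.
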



On the other hand, the Jacobian matrix at $E_{xz}$ is
\begin{align}\label{eq:Jacobian-xz}
    J(E_{xz})=\begin{bmatrix}
        1-2x_2-\frac{(e_2c_2-a_2d_2)^2}{e_2^2c_2} & -\frac{c_1d_2m}{e_2c_2+(a_1-a_2)d_2}+\frac{d_2m(e_2c_2-a_2d_2)}{e_2^2c_2} & -\frac{a_2d_2^2}{e_2^2c_2}\\
        0 & \mcI_y^{xz} & 0\\
        \frac{(e_2c_2-a_2d_2)^2}{e_2c_2} & -\frac{md_2(e_2c_2-a_2d_2)}{e_2c_2}+e_3c_3 & \frac{a_2d_2^2}{e_2c_2}-d_2
    \end{bmatrix}.
\end{align}
We can conclude the following lemma by using \Cref{lemma:limiting-no-periodic} again.

\begin{lemma}\label{thm:Exz-las}
    Assume that $E_{xz}$ exists; that is, $0<\mcI_z^1<\frac{e_2}{a_2}$. If $\mathcal{I}_y^{xz}<0$ and $a_2\leq e_2$, then $E_{xz}$ is locally asymptotically stable. On the other hand, if $\mathcal{I}_y^{xz}>0$, then $E_{xz}$ is unstable. 
\end{lemma}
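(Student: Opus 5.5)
The plan is to exploit the block-triangular structure of the Jacobian \eqref{eq:Jacobian-xz}. The middle row of $J(E_{xz})$ is $(0,\mcI_y^{xz},0)$, so if we reorder the variables as $(x,z,y)$, the Jacobian becomes block upper triangular:
\begin{align*}
\begin{bmatrix}
A_{xz} & u_{xz}\\
0 & \mcI_y^{xz}
\end{bmatrix}.
\end{align*}
Here $A_{xz}$ is the $2\times2$ matrix formed by the $(x,z)$ rows and columns, and $u_{xz}$ is the $y$-column restricted to the $x$ and $z$ rows. Consequently the spectrum of $J(E_{xz})$ consists of $\mcI_y^{xz}$ together with the two eigenvalues of $A_{xz}$.

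The first step is to check that this $(x,z)$ block coincides with the Jacobian of the limiting system \eqref{eq:limiting-xz} at $(x_2,z_2)$, i.e.\ with $A_{xz}$ from the proof of \Cref{lemma:limiting-no-periodic}(b). This is immediate: setting $y=0$ in \eqref{eq:main-model-1} and \eqref{eq:main-model-3} gives exactly \eqref{eq:limiting-xz}. The entries $(1,1)$, $(1,3)$, $(3,1)$, $(3,3)$ of \eqref{eq:Jacobian-xz} therefore agree with those of $A_{xz}$. For the $(3,3)$ entry, $\frac{a_2d_2^2}{e_2c_2}-d_2$ equals $-\frac{a_2d_2\mcI_z^1}{e_2c_2}$ after substituting $\mcI_z^1=\frac{e_2c_2}{a_2}-d_2$, so this is a routine verification.

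For stability, use the trace and determinant computed in \Cref{lemma:limiting-no-periodic}(b). Under $0<\mcI_z^1<\frac{e_2}{a_2}$ we have $x_2>0$, hence
\[
\tr(A_{xz})=-x_2+\frac{a_2d_2\mcI_z^1(a_2-e_2)}{e_2^2c_2}<0
\]
because $a_2\le e_2$, and $\det(A_{xz})>0$. Both eigenvalues of $A_{xz}$ therefore have negative real parts. Combined with $\mcI_y^{xz}<0$, all three eigenvalues of $J(E_{xz})$ have negative real parts, and linearization gives local asymptotic stability. Note that the hypothesis $c_2<1$ in \Cref{lemma:limiting-no-periodic} is not needed here: it was used there only for the repeller and separatrix arguments, not for the trace and determinant.

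For instability, if $\mcI_y^{xz}>0$ then $J(E_{xz})$ has a positive real eigenvalue, and $E_{xz}$ is unstable by the linearization (instability) theorem. This holds regardless of the sign conditions on $A_{xz}$. The only point requiring care is confirming that the middle-row entry of \eqref{eq:Jacobian-xz} really equals $\mcI_y^{xz}$. To see this, evaluate $\partial Y/\partial y$ at $y=0$; it reduces to
\[
\frac{e_1c_1m x_2}{a_1x_2+z_2}-c_3-d_1,
\]
and $z_2/x_2=(e_2c_2-a_2d_2)/d_2$ gives the table value of $\mcI_y^{xz}$. I expect no real obstacle beyond this bookkeeping. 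The vector field is $C^1$ near $E_{xz}$ because $x_2,z_2>0$, so the ratio terms are smooth there.
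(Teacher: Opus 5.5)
Your proposal is correct and takes the same route as the paper. The paper proves the lemma by reading off from \eqref{eq:Jacobian-xz} that the spectrum of $J(E_{xz})$ consists of $\mcI_y^{xz}$ together with the eigenvalues of $A_{xz}$, and then invoking the trace and determinant signs from \Cref{lemma:limiting-no-periodic}(b). Your extra checks are accurate bookkeeping that the paper leaves implicit: that the middle-row entry equals $\mcI_y^{xz}$, that the $(x,z)$ block is the limiting-system Jacobian, that $c_2<1$ is not needed, and that the field is smooth near $E_{xz}$.
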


\Cref{thm:Exy-las} and \Cref{thm:Exz-las} answer when the IG predator and the IG prey can invade successfully. Nonetheless, the bistability might occur under the conditions of \Cref{thm:Exy-las} and \Cref{thm:Exz-las}. We provide the lemma in the next subsection to ensure the competition exclusion.

\subsection{Competition Exclusion}

In this subsection, we strengthen the results in \Cref{thm:Exy-las} and \Cref{thm:Exz-las} to the global version by using the theory of limiting system \cite{thieme1992convergence}.


\begin{lemma}\label{lemma:y-exclude}
    Assume that $E_{xy}$ exists and $\max\{c_1,c_2\}<1$. If either 
\begin{enumerate}
        \item $\frac{e_2c_2}{a_2}+\mcI_z^y<0$, $\mcI_y^1>c_3$, and $a_1\leq e_1m$, or
        \item $a_1\leq\min\{a_2,e_1m\}$, $c_3<\mcI_y^1<\frac{e_1m}{a_1}$, and $\frac{e_2c_2}{e_1c_1m}<\min\left\{\frac{e_3}{a_3},-\frac{\mcI_z^y}{d_1}\right\}$.
    \end{enumerate}
    Then $E_{xy}$ is globally asymptotically stable in $\{(x,y,z):x,y>0,z\geq0\}$.
\end{lemma}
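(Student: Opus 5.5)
The proof has two stages. First we show that $z(t)\to0$. Then we apply the theory of asymptotically autonomous (limiting) systems \cite{thieme1992convergence} to reduce the $(x,y)$-dynamics to the limiting system \eqref{eq:limiting-xy}, where global convergence to $(x_1,y_1)$ follows from \Cref{lemma:limiting-no-periodic}, \Cref{lemma:E0-repeller-limiting} and \Cref{lemma:E1-saddle-limiting}.

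\textbf{Stage 1: $z(t)\to0$.} Under (a), $\frac{e_2c_2}{a_2}+\mcI_z^y<0$, so \Cref{lemma:consumer-cant-sustain} gives $z(t)\to0$ for every initial datum. Under (b), the hypotheses $a_1\le a_2$ and $\frac{e_2c_2}{e_1c_1m}<\min\{\frac{e_3}{a_3},-\frac{\mcI_z^y}{d_1}\}$ are exactly those of \Cref{lemma:z-extinct}. Hence $z(t)\to0$ whenever $y_0>0$, and in fact the convergence is exponential. If $z_0=0$, then $z\equiv0$ and the problem is literally the planar system, so we may assume $z_0>0$.

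\textbf{Stage 2: the limiting system.} Once $z\to0$, the pair $(x,y)$ satisfies a planar system that is asymptotically autonomous with limit \eqref{eq:limiting-xy}. Thieme's theorem says the $\omega$-limit set $\omega$ of $(x,y)$ is nonempty, compact, connected and chain recurrent for \eqref{eq:limiting-xy}. In the plane, it is then either an equilibrium, a union of equilibria and connecting orbits forming a cycle, or a periodic orbit.

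The hypotheses of \Cref{lemma:limiting-no-periodic}(a) all hold: $c_1<1$ from $\max\{c_1,c_2\}<1$, $a_1\le e_1m$, and $0<\mcI_y^1<\frac{e_1m}{a_1}$ because $E_{xy}$ exists. So periodic orbits and separatrix cycles are ruled out. The candidate equilibria in $\omega$ are therefore $(0,0)$, $(1,0)$ and $(x_1,y_1)$, and it remains to exclude the first two.

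\textbf{Excluding $(0,0)$.} \Cref{lemma:E0-repeller} (with $\max\{c_1,c_2\}<1$) applied to the full system gives $\liminf x(t)\ge 1-\max\{c_1,c_2\}>0$. Hence $(0,0)\notin\omega$.

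\textbf{Excluding $(1,0)$.} This is the delicate step, because one must show that $y$ stays away from $0$ in the full system and not merely in the limit. The plan is to use $\mcI_y^1>c_3$. Near $(1,0,0)$ with small $z$, the per-capita rate satisfies
\[
\frac{y'}{y}\ge \frac{e_1c_1mx}{a_1x+my+z}-c_3-d_1\approx \mcI_y^1-c_3>0,
\]
which bounds the predation term $\frac{c_3z}{a_3y+z}\le c_3$ uniformly in $z/y$. This is precisely why $\mcI_y^1>c_3$ is assumed.

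So there is a neighborhood $U$ of $(1,0,0)$ in which $y'/y>\frac{1}{2}(\mcI_y^1-c_3)$. Hence $y(t)\to0$ together with $(x,z)\to(1,0)$ is impossible. More carefully, if $(1,0)\in\omega$, then by the Butler--McGehee lemma, $\omega$ would have to contain a point of the stable manifold of $(1,0)$ other than $(1,0)$ itself. For the limiting system that stable manifold is the $x$-axis (\Cref{lemma:E1-saddle-limiting}). Its closure then brings $(0,0)$ or the whole axis into $\omega$, which contradicts the exclusion of $(0,0)$ together with the absence of cycles. Alternatively, the argument can be run directly via uniform persistence of $y$: from $U$ one gets $\liminf y>0$ whenever $y_0>0$.

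\textbf{Conclusion.} With $(0,0)$ and $(1,0)$ excluded, $\omega=\{(x_1,y_1)\}$, and hence $(x,y,z)\to E_{xy}$. Local asymptotic stability follows from \Cref{thm:Exy-las}. Under (a), $\mcI_z^{xy}<0$ follows since $\frac{e_2c_2d_1}{e_1c_1m+(a_2-a_1)d_1}\le\frac{e_2c_2}{a_2}$ when $\mcI_y^1>0$. Under (b), it follows from the exponential decay of $z$. Together these give global asymptotic stability. I expect the main obstacle to be the rigorous exclusion of $(1,0)$ from $\omega$ in the non-autonomous setting, since this needs the uniform bound on the predation term.
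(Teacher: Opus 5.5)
Your proposal is correct and follows the paper's route. You get $z\to0$ from \Cref{lemma:consumer-cant-sustain} in case (a) and \Cref{lemma:z-extinct} in case (b); Thieme's trichotomy with \Cref{lemma:limiting-no-periodic} rules out cycles; \Cref{lemma:E0-repeller} excludes $E_0$; and the estimate $y'/y\ge\frac{e_1c_1mx}{a_1x+my+z}-c_3-d_1\to\mcI_y^1-c_3>0$ excludes $E_1$. Your extra Butler--McGehee argument is unnecessary, since the trichotomy already forces $\omega$ to be a single equilibrium.

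For the local stability in case (b), the direct estimate $\mcI_z^{xy}\le\frac{e_2c_2d_1}{e_1c_1m}+\mcI_z^y<0$ is cleaner than appealing to the exponential decay of $z$; it follows from $a_1\le a_2$ and $\frac{e_2c_2}{e_1c_1m}<-\frac{\mcI_z^y}{d_1}$.
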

\begin{proof}
    Since $z=0$ is invariant, we may assume the initial data $x_0,y_0,z_0>0$. For (a), \Cref{lemma:consumer-cant-sustain} provides that $z(t)\to0$ as $t\to\infty$. By \Cref{prop:well-posedness}, the forward solution is bounded. Moreover, since $z(t)\to0$, the $(x,y)$-components form a planar asymptotically autonomous system with limiting system \eqref{eq:limiting-xy}. Let $\omega$ be the $\omega$-limit set of the forward solution corresponding to $(x_0,y_0,z_0)$. Then \Cref{lemma:consumer-cant-sustain} provides that $\omega\subseteq\{(x,y,z):z=0\}$.

    By \cite[Theorem 1.6]{thieme1992convergence}, 
    $\omega$ is either
    \begin{enumerate*}
        \item[(i)] an equilibrium,
        \item[(ii)] a periodic orbit, or
        \item[(iii)] a union of equilibria and connecting orbits forming a cyclic structure.
    \end{enumerate*}
    From \Cref{lemma:limiting-no-periodic}, (ii) and (iii) are impossible. From \Cref{lemma:E0-repeller}, $E_0\not\in\omega$. Suppose that $E_1\in\omega$, then \eqref{eq:main-model-2} provides that
    \begin{align*}
        \frac{y'}{y}=\frac{e_1c_1mx}{a_1x+my+z}-\frac{c_3z}{a_3y+z}-d_1\geq\frac{e_1c_1mx}{a_1x+my+z}-c_3-d_1\to\mcI_y^1-c_3>0,
    \end{align*}
    as $t\to\infty$, which contradicts to $y\to0$ as $t\to\infty$. Therefore, $E_1\not\in\omega$. Finally, since
    \begin{align*}
        \mcI_z^{xy}=\frac{e_2c_2d_1}{e_1c_1m+(a_2-a_1)d_1}+\mcI_z^y\leq\frac{e_2c_2}{a_2}+\mcI_z^y<0,
    \end{align*}
    $E_{xy}$ is locally asymptotically stable. The Poincar\'{e}-Bendixson trichotomy \cite{thieme1992convergence} provides that $\omega=\{E_{xy}\}$, that is, every solution in $\{(x,y,z):x,y>0,z\geq0\}$ tend to $E_{xy}$.

    Since (b) can be proved by a similar argument, we omit the details. 
\end{proof}

From the proof of \Cref{lemma:y-exclude}, we obtained that the conditions are sufficient conditions for \Cref{thm:Exy-las}. Furthermore, \Cref{lemma:y-exclude} provides the condition such that the resource and the IG prey will survive, and the IG predator will exclude, which is the competition exclusion.

\begin{lemma}\label{lemma:z-exclude}
    Assume that $E_{xz}$ exists and $\max\{c_1,c_2\}<1$. If either
    \begin{enumerate}
        \item  $\mcI_y^1\leq0$, $\mcI_z^1>0$, and $a_2\leq e_2$, or
        \item  $\frac{a_2}{d_2}\mcI_z^1>\frac{a_1}{d_1}\mathcal I_y^1>0$ and $a_2\leq\min\{a_1,e_2\}$,
    \end{enumerate}
    then $E_{xz}$ is globally asymptotically stable in $\{(x,y,z):x,z>0,y\geq0\}$.
\end{lemma}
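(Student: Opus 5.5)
The plan mirrors the proof of \Cref{lemma:y-exclude}, with the roles of $y$ and $z$ interchanged and with \eqref{eq:limiting-xz} as the limiting system. Since $y=0$ is invariant and the case $x_0>0$, $z_0>0$, $y_0=0$ is handled directly by the planar analysis below, we may assume $x_0,y_0,z_0>0$.

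\textbf{Step 1: $y(t)\to0$.} In case (a) this follows from \Cref{lemma:consumer-cant-sustain}, because $\mcI_y^1\le 0$. In case (b) it follows from \Cref{lemma:y-extinct}, whose hypotheses are $a_1\ge a_2$ and $\frac{a_2}{d_2}\mcI_z^1>\frac{a_1}{d_1}\mcI_y^1>0$; both hold, since $a_2\le\min\{a_1,e_2\}$ gives $a_1\ge a_2$.

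\textbf{Step 2: reduction to the planar system.} By \Cref{prop:well-posedness} the solution is bounded, so its $\omega$-limit set $\omega$ is nonempty, compact and invariant, and lies in $\{y=0\}$. The $(x,z)$-components then form an asymptotically autonomous planar system with limiting system \eqref{eq:limiting-xz}, so \cite[Theorem 1.6]{thieme1992convergence} applies. Thus $\omega$ is an equilibrium, a periodic orbit, or a cyclic chain of equilibria and connecting orbits. To exclude the last two alternatives I would use \Cref{lemma:limiting-no-periodic}(b), whose hypotheses $c_2<1$, $a_2\le e_2$ and $0<\mcI_z^1<\frac{e_2}{a_2}$ all hold. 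Here $c_2<1$ is assumed, $a_2\le e_2$ is assumed in both cases, and $0<\mcI_z^1<e_2/a_2$ is exactly the existence condition of $E_{xz}$ rewritten, since $e_2c_2-a_2d_2=a_2\mcI_z^1$. In case (b), $\mcI_z^1>0$ also follows from the chain of inequalities.

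\textbf{Step 3: excluding $E_0$ and $E_1$.} \Cref{lemma:E0-repeller} with $\max\{c_1,c_2\}<1$ gives $\liminf x\ge 1-\max\{c_1,c_2\}>0$, so $E_0\notin\omega$. If $E_1\in\omega$, I would argue as in \Cref{lemma:limiting-no-periodic}: near $(1,0,0)$ we have
\[
\frac{z'}{z}\ge \frac{e_2c_2x}{a_2x+my+z}-d_2,
\]
and this tends to $\mcI_z^1>0$ as $(x,y,z)\to(1,0,0)$. Hence $z$ grows whenever the orbit is near $E_1$. Combined with the Butler--McGehee-type reasoning built into the trichotomy, this shows the orbit cannot accumulate on $E_1$. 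More directly, $E_1$ is a saddle for \eqref{eq:limiting-xz} by \Cref{lemma:E1-saddle-limiting}, its stable manifold is the $x$-axis, and any invariant $\omega$ containing $E_1$ but not equal to it would have to contain a point of that stable manifold other than $E_1$. Such a point is an orbit converging to $E_1$ whose $\alpha$-limit lies in $\omega$ and must be $E_0$, which was just excluded, or $E_1$, which is impossible. So the only remaining equilibrium is $E_{xz}$, and $\omega=\{E_{xz}\}$.

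For completeness I would note that local asymptotic stability of $E_{xz}$ also follows from \Cref{thm:Exz-las}, once $\mcI_y^{xz}<0$ is shown. In case (a) this holds because
\[
\mcI_y^{xz}=\frac{e_1c_1md_2}{e_2c_2+(a_1-a_2)d_2}-c_3-d_1 .
\]
Using $e_2c_2-a_2d_2>0$, the denominator exceeds $a_1d_2$, so the first term is below $e_1c_1m/a_1$ and $\mcI_y^{xz}<\mcI_y^1-c_3<0$. Case (b) is consistent as well, although global attraction does not actually require this.

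\textbf{Main obstacle.} The delicate point is excluding $E_1$ from $\omega$. In case (a), $y$ need not decay exponentially when $\mcI_y^1=0$, so the argument must be robust to only $y\to0$. I would use the inequality $z'/z\ge \frac{e_2c_2x}{a_2x+my+z}-d_2$, together with $y\to0$ and $x\to 1$ along the approach, to get a positive lower bound on $z'/z$ in a neighbourhood of $E_1$. This contradicts $z\to0$ along any subsequence staying near $E_1$, exactly as in \Cref{lemma:y-exclude}.
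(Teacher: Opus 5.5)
Your proposal is correct and follows the paper's route; the paper proves this lemma by repeating the proof of \Cref{lemma:y-exclude}: $y\to0$ from \Cref{lemma:consumer-cant-sustain} or \Cref{lemma:y-extinct}, Thieme's trichotomy combined with \Cref{lemma:limiting-no-periodic}(b), and then excluding $E_0$ by \Cref{lemma:E0-repeller} and $E_1$ by the invasion rate $\mcI_z^1>0$. Two small remarks: after the trichotomy the only case left is $\omega=\{E_1\}$, which your invasion bound settles (your stable-manifold aside does not, since a nonautonomous trajectory converging to $E_1$ need not lie on the planar stable manifold); and the stability you leave unchecked in case (b) follows from $\mcI_y^{xz}=-c_3+\frac{d_1d_2}{e_2c_2+(a_1-a_2)d_2}\bigl(\frac{a_1}{d_1}\mcI_y^1-\frac{a_2}{d_2}\mcI_z^1\bigr)<0$, an identity that covers case (a) as well.
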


The above lemma can be proved by a similar argument in \Cref{lemma:y-exclude}.
Thanks to Lemma~\ref{lemma:y-exclude} and Lemma~\ref{lemma:z-exclude}, we obtain

\begin{theorem}[Competition-exclusion principle]\label{thm:competition-exclusion}
    Assume that $\max\{c_1,c_2\}<1$.
    $E_{xy}$ is globally asymptotically stable in $\{(x,y,z):x,y>0,z\geq0\}$ if 
    $E_{xy}$ exists and either (a) or (b) in Lemma~\ref{lemma:y-exclude} holds; $E_{xz}$ is globally asymptotically stable in $\{(x,y,z):x,z>0,y\geq0\}$ if $E_{xz}$ exists and either (a) or (b) in Lemma~\ref{lemma:z-exclude} holds.
\end{theorem}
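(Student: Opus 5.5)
The theorem is essentially a restatement of \Cref{lemma:y-exclude} and \Cref{lemma:z-exclude}, so the proof reduces to establishing those two lemmas. The first statement follows from \Cref{lemma:y-exclude}(a) (already proved) and (b). The second follows from \Cref{lemma:z-exclude}. All remaining cases use the same template as the proof of \Cref{lemma:y-exclude}(a).

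\textbf{Step 1: extinction of the excluded consumer.} Since the invariant boundary plane can be handled by the limiting system alone, we take positive initial data.
\begin{itemize}
\item[] For \Cref{lemma:y-exclude}(b), the hypothesis $\frac{e_2c_2}{e_1c_1m}<\min\{\frac{e_3}{a_3},-\frac{\mcI_z^y}{d_1}\}$ together with $a_1\le a_2$ is exactly the hypothesis of \Cref{lemma:z-extinct}, so $z(t)\to0$.
\item[] For \Cref{lemma:z-exclude}(a), \Cref{lemma:consumer-cant-sustain} gives $y(t)\to0$.
\item[] For \Cref{lemma:z-exclude}(b), \Cref{lemma:y-extinct} gives $y(t)\to0$; this needs $a_1\ge a_2$, which is part of the assumption $a_2\le\min\{a_1,e_2\}$.
\end{itemize}

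\textbf{Step 2: reduction to a planar limiting system.} Once one consumer tends to zero, the remaining two components form a bounded planar asymptotically autonomous system (bounded by \Cref{prop:well-posedness}). Its limiting system is \eqref{eq:limiting-xy} or \eqref{eq:limiting-xz}. By \cite[Theorem 1.6]{thieme1992convergence}, the $\omega$-limit set is an equilibrium, a periodic orbit, or a cyclic chain of equilibria. \Cref{lemma:limiting-no-periodic} rules out the last two. Its hypotheses ($c_i<1$, $a_1\le e_1m$ or $a_2\le e_2$, and existence of the interior equilibrium) are all contained in the assumptions. \Cref{lemma:E0-repeller} excludes $E_0$.

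\textbf{Step 3: excluding $E_1$.} This is where I expect the main obstacle.
\begin{itemize}
\item[] In \Cref{lemma:y-exclude}(b), the bound $\mcI_y^1>c_3$ yields $y'/y\to\mcI_y^1-c_3>0$ near $E_1$, exactly as in case (a).
\item[] In \Cref{lemma:z-exclude}, near $E_1$ we have $z'/z\ge \frac{e_2c_2x}{a_2x+my+z}-d_2\to \mcI_z^1>0$, since the $y$-predation term on $z$ is nonnegative. In case (b), $\mcI_z^1>0$ follows from $\frac{a_2}{d_2}\mcI_z^1>0$. This contradicts $z\to0$ along a subsequence. The subtle point is that $E_1\in\omega$ only gives convergence along times spent near $E_1$. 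As in the paper, I will use the standard argument: if $\omega$ contains $E_1$ but is not $\{E_1\}$, the Butler--McGehee-type conclusion of Thieme's trichotomy forces a cyclic chain, which is already excluded. Hence $\omega=\{E_1\}$, and then the growth-rate contradiction applies.
\end{itemize}

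\textbf{Step 4: the remaining equilibrium is $E_{xy}$ or $E_{xz}$.} It remains to check that this equilibrium is hyperbolically stable in the full system, so that the limit is genuinely attracting. For $E_{xy}$ in case (b), using $a_1\le a_2$ gives $\mcI_z^{xy}\le \frac{e_2c_2d_1}{e_1c_1m}+\mcI_z^y<0$ by the last hypothesis, and \Cref{thm:Exy-las} applies.

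For $E_{xz}$ we need $\mcI_y^{xz}<0$.
\begin{itemize}
\item[] In case (a), $\mcI_y^1\le0$ and $a_2d_2<e_2c_2$ give
\[
\mcI_y^{xz}<\frac{e_1c_1m}{a_1}-d_1-c_3\le -c_3<0 .
\]
\item[] In case (b), we use $a_1\ge a_2$ and the inequality $\frac{a_2}{d_2}\mcI_z^1>\frac{a_1}{d_1}\mcI_y^1$, rewritten as $\frac{e_2c_2}{d_2}>\frac{e_1c_1m}{d_1}$. This gives $\frac{e_1c_1md_2}{e_2c_2+(a_1-a_2)d_2}<d_1$, hence $\mcI_y^{xz}<-c_3<0$.
\end{itemize}
Then \Cref{thm:Exz-las} applies. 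Combining Steps 1--4 gives $\omega=\{E_{xy}\}$ or $\{E_{xz}\}$, which proves the theorem.
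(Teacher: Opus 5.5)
Your proposal is correct and takes the paper's route. The paper proves only Lemma~\ref{lemma:y-exclude}(a) in detail and says the remaining cases follow ``by a similar argument.'' Your Steps 1--4 supply exactly those cases, using Lemmas~\ref{lemma:z-extinct}, \ref{lemma:consumer-cant-sustain} and \ref{lemma:y-extinct}, Thieme's trichotomy, and the local stability lemmas.

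There is one algebraic slip, in Step 4 for case (b) of Lemma~\ref{lemma:z-exclude}. Since $\frac{a_2}{d_2}\mcI_z^1=\frac{e_2c_2}{d_2}-a_2$ and $\frac{a_1}{d_1}\mcI_y^1=\frac{e_1c_1m}{d_1}-a_1$, the hypothesis reads $\frac{e_2c_2}{d_2}-a_2>\frac{e_1c_1m}{d_1}-a_1$. It does not read $\frac{e_2c_2}{d_2}>\frac{e_1c_1m}{d_1}$; that inequality is strictly stronger when $a_1>a_2$ and need not hold. For example, take $a_1=2$, $a_2=1$, $d_1=d_2=1$, $e_2c_2=3$, $e_1c_1m=3.5$.

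The fix is immediate. Multiply the correct form by $d_2$ to get $e_2c_2+(a_1-a_2)d_2>e_1c_1md_2/d_1>0$. This gives $\frac{e_1c_1md_2}{e_2c_2+(a_1-a_2)d_2}<d_1$ directly, without using $a_1\ge a_2$. Hence $\mcI_y^{xz}<-c_3<0$, and the rest of your argument stands.
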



\section{Positive Equilibria and Their Bifurcations}\label{sec:coexistence}

We begin by analyzing coexistence states by examining positive equilibria. Given a positive equilibrium $(x^*,y^*,z^*)$, from \eqref{eq:main-model-2} and \eqref{eq:main-model-3}, 
we have
\begin{align*}
    &\frac{e_1c_1mx^*}{a_1x^*+my^*+z^*}=\frac{c_3z^*}{a_3y^*+z^*}+d_1>d_1\quad\textup{and}
\quad \frac{e_2c_2x^*}{a_2x^*+my^*+z^*}=-\frac{e_3c_3y^*}{a_3y^*+z^*}+d_2<d_2,
\end{align*}
which gives
\begin{proposition}
Suppose that \eqref{eq:main-model-1}--\eqref{eq:main-model-3} admits a positive equilibrium, denoted by $(x^*,y^*,z^*)$. Then
\begin{align*}
    \frac{a_1}{d_1}\mcI_y^1>\frac{my^*+z^*}{x^*}>\frac{a_2}{d_2}\mcI_z^1,
\end{align*}
where $\frac{my^*+z^*}{x^*}$ is the total effective consumer-to-resource ratio. 
\end{proposition}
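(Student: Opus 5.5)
Starting from the two inequalities displayed just before the statement, the plan is to rewrite each one as an inequality for the ratio $\rho:=\frac{my^*+z^*}{x^*}$ and then compare the resulting bounds with $\mcI_y^1$ and $\mcI_z^1$ from \Cref{tab:invasion-rate}. Since $(x^*,y^*,z^*)$ is positive, all denominators are positive, so cross-multiplication preserves the direction of each inequality.

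First I rewrite the $y$-equation inequality. Setting $Y=0$ in \eqref{eq:main-model-2} and dividing by $y^*>0$ gives
\[
\frac{e_1c_1mx^*}{a_1x^*+my^*+z^*}=\frac{c_3z^*}{a_3y^*+z^*}+d_1 .
\]
Because $z^*>0$, the right-hand side is strictly larger than $d_1$. Dividing numerator and denominator on the left by $x^*$ turns this into $\frac{e_1c_1m}{a_1+\rho}>d_1$. Multiplying out gives $e_1c_1m>d_1a_1+d_1\rho$, that is,
\[
\rho<\frac{e_1c_1m-a_1d_1}{d_1}=\frac{a_1}{d_1}\Big(\frac{e_1c_1m}{a_1}-d_1\Big)=\frac{a_1}{d_1}\mcI_y^1 .
\]

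Next I treat the $z$-equation in the same way. Setting $Z=0$ in \eqref{eq:main-model-3} and dividing by $z^*>0$ gives
\[
\frac{e_2c_2x^*}{a_2x^*+my^*+z^*}=d_2-\frac{e_3c_3y^*}{a_3y^*+z^*} .
\]
Because $y^*>0$, the right-hand side is strictly less than $d_2$. Dividing by $x^*$ as before gives $\frac{e_2c_2}{a_2+\rho}<d_2$, so $e_2c_2<d_2a_2+d_2\rho$ and hence
\[
\rho>\frac{e_2c_2-a_2d_2}{d_2}=\frac{a_2}{d_2}\mcI_z^1 .
\]
Putting the two bounds together yields the claimed chain of inequalities.

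No step here is a real obstacle. The only points needing care are that the strict inequalities come from the strict positivity of $z^*$ and of $y^*$ respectively, and that the factorizations through $\mcI_y^1$ and $\mcI_z^1$ are written exactly as in \eqref{I-y} and \Cref{tab:invasion-rate}. A side consequence is that a positive equilibrium can exist only if $\frac{a_1}{d_1}\mcI_y^1>\frac{a_2}{d_2}\mcI_z^1$.
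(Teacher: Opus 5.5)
Your proposal is correct and follows the paper's own argument: the $y$- and $z$-equilibrium equations give $\frac{e_1c_1mx^*}{a_1x^*+my^*+z^*}>d_1$ and $\frac{e_2c_2x^*}{a_2x^*+my^*+z^*}<d_2$, with strictness coming from $z^*>0$ and $y^*>0$ respectively. Dividing through by $x^*$ and solving for the ratio then yields both bounds. You just write out explicitly the rearrangement that the paper leaves implicit.
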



This inequality means that the IG prey has a resource-based advantage. The IG predator compensates for its resource-based disadvantage by feeding on the IG prey. This interpretation is qualitatively consistent with the classical coexistence criterion of Holt and Polis \cite{holt1997theoretical}.




It is difficult to compute the positive equilibrium by solving the algebraic equation directly; the stability of the positive equilibrium is also complicated to study. 
Instead of solving the exact form of positive equilibria, we combine the results in \Cref{subsec:local-stability} with bifurcation theory to investigate the local emergence of positive equilibria from boundary equilibria.
We then introduce a change of coordinates to 
obtain an explicit parameterization of the positive equilibrium branch and classify its
global structure in
$8$ generic parameter regimes.

At the end of this section, we study the stability corresponding to the relative predation efficiency $m$ and the invasion rate $\mcI_z^y$.

\subsection{Transcritical Bifurcation}

In this subsection, we study the transcritical bifurcation from $E_{xz}$ as a function of the parameter $m$, since $E_{xz}$ is independent of $m$.

To avoid the degeneration of the bifurcation, we consider the following generic assumptions
\begin{align}
    &\Delta_{xz}:=d_2^2(a_2-a_1)+ e_2c_2\left(\mcI_z^y+\frac{e_3d_1}{a_3}\right)\neq0,\tag{A1}\label{eq:generic-assumption-Delta-xz}\\
    &\Delta_{xy}:=(a_2-a_1)(\mcI_z^y)^2+e_2c_2\left( \mcI_z^y+\frac{e_3d_1}{a_3}\right)\neq0,\quad\textup{and}\quad\mcI_z^y\neq0\tag{A2}\label{eq:generic-assumption-I_z^y}
\end{align}

From \Cref{subsec:local-stability}, we have $E_{xz}$
is locally asymptotically stable if $0<\mcI_z^1<\frac{e_2}{a_2}$, $a_2\leq e_2$, and $\mcI_y^{xz}<0$. It is easy to solve that
\begin{align}\label{m-xz}
\mathcal{I}_y^{xz}=0\quad \text{ if and only if }  \quad  m=m_{xz}:=\frac{c_3+d_1}{e_1c_1d_2}(e_2c_2+(a_1-a_2)d_2).
\end{align}
From this value, we can get a local transcritical bifurcation at $E_{xz}$ by applying Sotomayor's theorem \cite{sotomayor1973generic} on the equilibrium $E_{xz}$.

\begin{theorem}\label{thm:bifurcation-Exz}
    Assume that $0<\mcI_z^1<\frac{e_2}{a_2}$ and $a_2\leq e_2$. If \eqref{eq:generic-assumption-Delta-xz} holds, then the system \eqref{eq:main-model-1}--\eqref{eq:main-model-3} undergoes a local transcritical bifurcation at $m=m_{xz}$. Moreover, when $m$ is near $m_{xz}$, we have
    \begin{enumerate}
        \item if $\Delta_{xz}>0$, then there is a locally asymptotically stable positive equilibrium branch on $m>m_{xz}$;
        \item if $\Delta_{xz}<0$, then there is an unstable positive equilibrium branch on $m<m_{xz}$.
    \end{enumerate}
\end{theorem}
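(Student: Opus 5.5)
We will apply Sotomayor's theorem to the vector field ${\bm F}(\cdot;m)$ at $(E_{xz},m_{xz})$. Since $E_{xz}=(x_2,0,z_2)$ does not depend on $m$, it remains an equilibrium for every $m$, and the bifurcation can only be transcritical. From \eqref{eq:Jacobian-xz}, after reordering coordinates as $(x,z\mid y)$, the Jacobian is block triangular. The $(x,z)$-block equals $A_{xz}$ from \Cref{lemma:limiting-no-periodic}. Under $0<\mcI_z^1<\frac{e_2}{a_2}$ and $a_2\le e_2$, this block has negative trace and positive determinant, so both of its eigenvalues have negative real part. The remaining eigenvalue is $\mcI_y^{xz}$. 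By \eqref{m-xz}, this eigenvalue vanishes at $m=m_{xz}$, and since $\mcI_y^{xz}$ is strictly increasing in $m$ it crosses zero transversally there. Hence $0$ is a simple eigenvalue of $J:=J(E_{xz})|_{m=m_{xz}}$.

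\textbf{Step 1 (kernel vectors).} We first compute the kernel vectors. For the left kernel we take $w=(0,1,0)$, because the middle row of $J$ is $(0,\mcI_y^{xz},0)$. For the right kernel we take $v=(v_1,1,v_3)$, where $(v_1,v_3)$ solves $A_{xz}(v_1,v_3)^T=-(J_{12},J_{32})^T$. This system is uniquely solvable because $\det A_{xz}>0$.

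\textbf{Step 2 (transversality and nondegeneracy).} We then check Sotomayor's conditions. First, $w\cdot {\bm F}_m(E_{xz};m_{xz})=0$, since $Y$ vanishes identically on $y=0$; this is exactly the transcritical setting. Second,
\[
w\cdot D{\bm F}_m\, v=\partial_m\!\Big(\frac{e_1c_1mx}{a_1x+my+z}\Big)\Big|_{E_{xz}}=\frac{e_1c_1x_2}{a_1x_2+z_2}>0 .
\]
Third, the nondegeneracy quantity $w\cdot D^2{\bm F}(v,v)=D^2Y(E_{xz})(v,v)$ must be nonzero. Because $Y=yF_2$, this reduces to $2\,\nabla F_2(E_{xz})\cdot v$, which is the derivative of the $y$ per-capita growth rate along $v$. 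This is the main obstacle: we need an explicit formula for $(v_1,v_3)$, obtained by inverting $A_{xz}$ via Cramer's rule, and then we must simplify $\nabla F_2\cdot v$ using $x_2=1-\frac{a_2\mcI_z^1}{e_2}$, $z_2/x_2=a_2\mcI_z^1/d_2$ and $m=m_{xz}$. After clearing positive factors such as $\det A_{xz}$, $x_2$, $d_2$, $c_1$ and $e_2$, we expect the result to be a negative multiple of $\Delta_{xz}$, so that (A1) is precisely the nondegeneracy condition. We would carry out this algebra by expressing everything in terms of $\rho:=z_2/x_2$. The partial derivatives $\partial_xF_2$, $\partial_yF_2$, $\partial_zF_2$ at $E_{xz}$ simplify to rational functions of $\rho$; for example, $\partial_zF_2=-\frac{e_1c_1m}{x_2(a_1+\rho)^2}-\frac{c_3}{z_2}\cdot\frac{a_3\cdot 0+0}{\cdot}$ must be treated with care, because the ratio term $\frac{c_3z}{a_3y+z}$ equals $1$ on $y=0$ and contributes only through $\partial_y$, namely $\partial_y\frac{c_3z}{a_3y+z}=-\frac{c_3a_3}{z_2}$.

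\textbf{Step 3 (direction and stability).} With $\Delta_{xz}\neq0$, Sotomayor's theorem gives a transcritical bifurcation. The positive branch has the form $E_{xz}+s v+O(s^2)$ with $s>0$, so that $y>0$. Along this branch, $m-m_{xz}=-\frac{w\cdot D^2{\bm F}(v,v)}{2\,w\cdot D{\bm F}_m v}\,s+O(s^2)$. By the sign relation found in Step 2, $\Delta_{xz}>0$ gives $m>m_{xz}$, while $\Delta_{xz}<0$ gives $m<m_{xz}$. For stability, the two eigenvalues near $A_{xz}$ stay in the left half-plane by continuity. By the standard exchange of stability in a transcritical bifurcation, the critical eigenvalue on the branch has sign opposite to that of $\mcI_y^{xz}$ at the same $m$. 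If the branch lies in $m>m_{xz}$, then $\mcI_y^{xz}>0$ there and the positive equilibrium is locally asymptotically stable. If the branch lies in $m<m_{xz}$, then $\mcI_y^{xz}<0$ there, so the critical eigenvalue is positive and the positive equilibrium is unstable. Equivalently, the critical eigenvalue along the branch can be computed directly as $\lambda(s)\approx s\,w\cdot D^2{\bm F}(v,v)+(m-m_{xz})\,w\cdot D{\bm F}_m v$, which confirms the signs.
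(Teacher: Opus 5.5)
Your proposal is correct and follows the paper's proof. Both apply Sotomayor's theorem at $(E_{xz},m_{xz})$ with left eigenvector $(0,1,0)^\top$ and right eigenvector $(v_1,1,v_3)^\top$, obtain the transversality coefficient $\frac{e_1c_1x_2}{a_1x_2+z_2}=\frac{e_1c_1d_2}{e_2c_2+(a_1-a_2)d_2}>0$, and let the sign of the quadratic coefficient fix the side of the branch and its stability. The computation you left as an expectation does come out as you predicted: $v$ enters only through $m+v_3-\rho v_1=\frac{e_2e_3c_2c_3}{\beta d_2}$ with $\beta=e_2c_2-a_2d_2$, which gives $\nabla F_2(E_{xz})\cdot v=\frac{e_2c_3a_3\,\Delta_{xz}}{(e_2c_2+(a_1-a_2)d_2)\,\beta(\beta-e_2)}$, exactly the coefficient the paper records and a negative multiple of $\Delta_{xz}$.
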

\begin{proof}
Let $\beta:=e_2c_2-a_2d_2$. Since $E_{xz}$ exists, we have $0<\beta<e_2$. Let us recall $J(E_{xz})$ from \eqref{eq:Jacobian-xz} and write 
$J(E_{xz})=J(E_{xz};m)$ to emphasize the dependence on $m$. Note that when $m=m_{xz}$, we have $\mathcal{I}_y^{xz}(m_{xz})=0$, which forms an zero eigenvalue of $J(E_{xz})=J(E_{xz};m_{xz})$.

Let $u$ be the eigenvector of $J(E_{xz};m_{xz})$ and $v$ be the eigenvector of $J(E_{xz};m_{xz})^\top$ corresponding to the zero eigenvalue $\mathcal{I}_y^{xz}(m_{xz})=0$. Note that under the assumption of Theorem~\ref{thm:bifurcation-Exz}, the zero eigenvalue is simple.
Let us choose $u$ and $v$ as 
$u=\begin{bmatrix}
        u_1 & 1 & u_3
    \end{bmatrix}^\top,\quad v=\begin{bmatrix}
        0 & 1 & 0
    \end{bmatrix}^\top$.

    The normal form \cite{sotomayor1973generic,buonomo2015note} of \eqref{eq:main-model-1}--\eqref{eq:main-model-3} can be written as
    \begin{align}\label{eq:normal-form-transcritical}
        \dot{\xi}=a\xi^2+b(m-m_{xz})\xi+\xi O(\|(\xi,m-m_{xz})\|^2),
    \end{align}
    where
    \begin{align*}
        &a=\frac12v^\top D^2\bm{F}(E_{xz},m_{xz})(u,u)=\frac{e_2c_3a_3\Delta_{xz}}{\bigl(e_2c_2+(a_1-a_2)d_2\bigr)\beta(\beta-e_2)}\neq0,\\
        &b=v^\top D\bm{F}_m(E_{xz},m_{xz})u=\frac{e_1c_1d_2}{e_2c_2+(a_1-a_2)d_2}>0.
    \end{align*}
    Then it is easy to see that 
    \begin{enumerate}
        \item If $\Delta_{xz}>0$, then there is a locally asymptotically stable positive equilibrium when $m$ is near $m_{xz}$ and $m>m_{xz}$.
        \item If $\Delta_{xz}<0$, then there is a unstable positive equilibrium when $m$ is near $m_{xz}$ and $m<m_{xz}$.
    \end{enumerate}

    This completes the proof.
\end{proof}


Similarly, the threshold for the invasion of the IG predator $z$ into 
$E_{xy}$ is obtained by solving $\mathcal I_z^{xy}=0$.  A direct computation gives
\begin{align}\label{m-xy}
\mathcal I_z^{xy}=0 \quad \text{ if and only if }\quad   m=m_{xy} := \frac{d_1}{e_1c_1} \left[ (a_1-a_2)-\frac{e_2c_2}{\mcI_z^y} \right].  
\end{align}

Different from $E_{xz}$, the equilibrium $E_{xy}$ itself depends on the bifurcation parameter $m$. So, a direct application of the standard local bifurcation calculation in the original coordinates is less convenient. Instead, in the next subsection, we follow the positive equilibrium branch bifurcating from $E_{xz}$ and determine whether this branch connects to $E_{xy}$ or can be continued for arbitrarily large values of $m$.





We first exclude accumulation at $E_0$ and $E_1$ under certain conditions.
Denote the set of positive equilibria by
\begin{align*}
    \Gamma=\{(E^*,m)\in(0,\infty)^4:f(E^*,m)=0\}.
\end{align*}

\begin{lemma}
    If $\max\{c_1,c_2\}<1$ and $\mcI_z^1>0$, then 
    there is no sequence $(E_n^*,m_n)\in\Gamma$ satisfying $E_n^*\to E_0$ or $E_n^*\to E_1$ as $n\to\infty.$ 
\end{lemma}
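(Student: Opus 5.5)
We argue by contradiction and read off information from the three equilibrium equations along any sequence $(E_n^*,m_n)=((x_n,y_n,z_n),m_n)\in\Gamma$. The two cases are treated separately. Near $E_0$ we use the $x$-equation, exactly as in \Cref{lemma:E0-repeller}. Near $E_1$ we use the $z$-equation together with the sign of $\mcI_z^1$.

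\emph{Accumulation at $E_0$.} Suppose $E_n^*\to E_0$. Dividing the first equilibrium equation $X(x_n,y_n,z_n)=0$ by $x_n>0$ and estimating as in the proof of \Cref{lemma:E0-repeller}, we get
\begin{align*}
0=1-x_n-\frac{c_1m_ny_n}{a_1x_n+m_ny_n+z_n}-\frac{c_2z_n}{a_2x_n+m_ny_n+z_n}\geq 1-\max\{c_1,c_2\}-x_n .
\end{align*}
The key point is that each fraction is bounded by its coefficient regardless of $m_n$. Indeed, $\frac{m_ny_n}{a_1x_n+m_ny_n+z_n}\le 1$ and $\frac{z_n}{a_2x_n+m_ny_n+z_n}\le 1$, and the two fractions $\frac{m_ny_n}{\cdot}$ and $\frac{z_n}{\cdot}$ sum to at most $1$ once both denominators are replaced by the smaller one. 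A cleaner way to write this is
\begin{align*}
\frac{c_1m_ny_n+c_2z_n}{\min\{a_1,a_2\}x_n+m_ny_n+z_n}\le\max\{c_1,c_2\}.
\end{align*}
It follows that $x_n\ge 1-\max\{c_1,c_2\}>0$ for every $n$, which contradicts $x_n\to0$. This argument does not need any control of $m_n$.

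\emph{Accumulation at $E_1$.} Suppose $E_n^*\to E_1$, so $x_n\to1$ and $y_n,z_n\to0$. Dividing $Z=0$ by $z_n>0$ gives
\begin{align*}
d_2=\frac{e_2c_2x_n}{a_2x_n+m_ny_n+z_n}+\frac{e_3c_3y_n}{a_3y_n+z_n}.
\end{align*}
Dividing $Y=0$ by $y_n$ gives $\frac{e_1c_1m_nx_n}{a_1x_n+m_ny_n+z_n}=d_1+\frac{c_3z_n}{a_3y_n+z_n}\le d_1+c_3$, which forces $m_n$ to stay bounded as $x_n\to 1$. Consequently $m_ny_n\to0$, and the first term in the $z$-identity tends to $e_2c_2/a_2$. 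The second term is nonnegative. Passing to the limit, we obtain $d_2\ge e_2c_2/a_2$, that is, $\mcI_z^1\le0$, which contradicts $\mcI_z^1>0$.

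The only delicate step is the boundedness of $m_n$ in the $E_1$ case, since $m$ varies along $\Gamma$ and $m_ny_n$ might not vanish a priori. We handle it with the $y$-equation as above: if $m_n\to\infty$ along a subsequence, then $\frac{m_nx_n}{a_1x_n+m_ny_n+z_n}\le d_1+c_3$ combined with $x_n\to1$ gives $m_ny_n\gtrsim m_n\to\infty$, which contradicts $y_n\to0$ only after rearranging. So we argue more carefully: from $e_1c_1m_nx_n\le(d_1+c_3)(a_1x_n+m_ny_n+z_n)$ we get $m_n(e_1c_1x_n-(d_1+c_3)y_n)\le(d_1+c_3)(a_1x_n+z_n)$. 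The bracket tends to $e_1c_1>0$ and the right side stays bounded, so $m_n$ is bounded. This completes the plan.
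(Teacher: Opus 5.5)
Your proof is correct. The $E_0$ case is the same as the paper's: the paper cites \Cref{lemma:E0-repeller}, and your inline bound $x_n\ge 1-\max\{c_1,c_2\}$, applied directly to equilibria, is the same estimate.

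For $E_1$ you take a different route. The paper uses $Z=0$ to obtain the lower bound $w=\frac{my+z}{x}>w_{xz}=\frac{e_2c_2-a_2d_2}{d_2}>0$ on the weighted consumer-to-resource ratio. It then feeds this into $X=0$ to get
\[
1-x\ \ge\ \frac{\min\{c_1,c_2\}\,w_{xz}}{\max\{a_1,a_2\}+w_{xz}}>0,
\]
valid for every positive equilibrium and every $m$. Since $w$ already absorbs the product $my$, this needs no control of $m$. It also yields an explicit, $m$-independent neighborhood of $E_1$ that contains no positive equilibria.

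You instead argue by contradiction in the limit. The equation $Y=0$ keeps $m_n$ bounded near $E_1$; your rearranged inequality $m_n(e_1c_1x_n-(d_1+c_3)y_n)\le(d_1+c_3)(a_1x_n+z_n)$ is the right way to show this. Hence $m_ny_n\to0$, and passing to the limit in $Z=0$ gives $\mcI_z^1\le 0$. This costs you the extra step of bounding $m_n$ and gives only a qualitative conclusion. In exchange, it reads the contradiction directly as the sign condition $\mcI_z^1\le0$, and it uses the $Y$ and $Z$ equations rather than $X$. Neither argument needs $\max\{c_1,c_2\}<1$ in the $E_1$ case.

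In a write-up, drop the heuristic sentence about $m_ny_n\gtrsim m_n$ and keep only the rearranged inequality.
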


\begin{proof}
    From \Cref{lemma:E0-repeller}, $E_n^*$ will not converge to $E_0$. From \eqref{eq:main-model-3}, we have $\frac{my+z}{x}\geq\frac{e_2c_2-a_2d_2}{d_2}$. The following inequality completes the proof:
    \begin{align*}
        1-x\geq\frac{\min\{c_1,c_2\}\cdot\frac{my+z}{x}}{\max\{a_1,a_2\}+\frac{my+z}{x}}\geq\frac{\min\{c_1,c_2\}\cdot\frac{e_2c_2-a_2d_2}{d_2}}{\max\{a_1,a_2\}+\frac{e_2c_2-a_2d_2}{d_2}}>0.
    \end{align*}
\end{proof}



\subsection{Global Structure of the Positive Equilibrium Branch}\label{subsec:global-structure-positive-equilibrium}

In this subsection, we 
study the global structure of $\Gamma$ by deriving an explicit parameterization of $\Gamma$. In particular, we determine whether this branch emanating from $E_{xz}$ connects to $E_{xy}$ or is continued to $m=\infty$. We also discuss the possible fold points along the branch (in this article, ``fold'' means the point with zero first-order derivative and nonzero second-order derivative along the curve).

For any fixed $m>0$, we introduce the change of variables: 
\begin{align}\label{eq:change-of-coordinate}
    (x,v,w):=\Big(x,\frac{my}{my+z},\frac{my+z}{x}\Big)=\Phi_m(x,y,z).
\end{align}
Here, $v$ is the fraction of the total weighted consumer abundance contributed by the IG prey;
while $w$ is the weighted consumer-to-resource ratio.

An aggregate prey-to-predator ratio was previously given in \cite{hsu2001rich}, which introduced $u:=\frac{x}{y+z}$ as a new state variable ($m=1$ therein), replacing the resource abundance $x$ while retaining the two predator variables $y$ and $z$ and facilitating the qualitative analysis of the resulting system consisting of $(u,y,z)$. 
Different from \cite{hsu2001rich}, we retain $x$ and replace $y$ and $z$ by new variables. 
Inspired by \cite{hsu2001rich}, the corresponding weighted aggregate quantity in our case is $my+z$, which suggests introducing its ratio to the resource $w=\frac{my+z}{x}$. 
The variables $x$ and $w$, however, determine only the total weighted consumer abundance $my+z=wx$
and cannot distinguish how this quantity is divided between the IG
prey $y$ and the IG predator $z$. To retain the composition information lost in this aggregation, we therefore introduce
$v=\frac{my}{my+z}$. The resulting coordinates $(x,v,w)$ separate resource abundance, consumer composition, and total weighted consumer abundance relative to the resource.  More importantly, they reduce the
positive-equilibrium equations to a one-dimensional curve parametrized
by $w$, which enables the global branch analysis below.

It is easy to check that $x,w>0$, $v\in(0,1)$ iff $(x,y,z)\in\mathbb{R}^3_{>0}$. So $\Phi_m$ maps $\mathbb R^3_{>0}$ onto 
\[
\Omega:=\{(x,v,w)|\, x>0,\ 0<v<1,\ w>0\}.
\]
We can also find its inverse, given by 
\[ 
\Phi_m^{-1}(x,v,w)=\Big( x,\frac{vwx}{m},(1-v)wx \Big). 
\] 
Therefore, the map $(x,y,z,m)\mapsto(\Phi_m(x,y,z),m)$ is a smooth diffeomorphism from $\mathbb R^3_{>0}\times \mathbb R_{>0}$
onto $\Omega\times \mathbb R_{>0}$. Hence, it preserves the local bifurcation type.
By direct computations, 
the ODEs \eqref{eq:main-model-1}--\eqref{eq:main-model-3} is transformed into 
\begin{equation}\label{eq:transformed-system}
\begin{cases}
\dps   x'=xA=:\Tilde{X}(x,v,w)\\
\dps   v'=v(1-v)(B-C)=:\Tilde{V}(v,w)\\
\dps   w'=w(vB+(1-v)C-A)=:\Tilde{W}(x,v,w)
\end{cases}
\end{equation}
where
\begin{align*}
    &A(x,v,w)=1-x-\frac{c_1vw}{a_1+w}-\frac{c_2w(1-v)}{a_2+w},\quad B(v,w)=\frac{e_1c_1m}{a_1+w}-f_Y(v)-d_1,\quad \\
    &C(v,w)=\frac{e_2c_2}{a_2+w}+f_Z(v)-d_2,\quad\bm{\Tilde{F}}(x,v,w)=(\Tilde{X}(x,v,w),\Tilde{V}(v,w),\Tilde{W}(x,v,w)),\\
    &f_Y(v)=\frac{c_3m(1-v)}{m+(a_3-m)v},\quad\textup{and}\quad f_Z(v)=\frac{e_3c_3v}{m+(a_3-m)v}.
\end{align*}
It is easy to see that the following are equivalent
\begin{enumerate}
    \item $(x,y,z)\in\mathbb R^3_{>0}$ is a positive equilibrium of \eqref{eq:main-model-1}--\eqref{eq:main-model-3}
    \item $(x,v,w)\in\Omega$ is a positive equilibrium of \eqref{eq:transformed-system}
    \item $(x,v,w)\in\Omega$ satisfies $A(x,v,w)=B(v,w)=C(v,w)=0$.
\end{enumerate}

From $B=0$ and $C=0$, the relation between the positive equilibrium and the relative predation efficiency $m$ can be expressed as 
\begin{align}\label{eq:relation-m-and-w}
    m=m(w)=\frac{a_1+w}{e_1e_3c_1}\left(\frac{e_2c_2a_3}{a_2+w}+a_3\mcI_z^y+e_3d_1\right).
\end{align}

Let 
\[
w_{xy}=-\frac{e_2c_2}{\mcI_z^y}-a_2=\frac{m_{xy}y_1}{x_1},\quad w_{xz}=\frac{e_2c_2-a_2d_2}{d_2}=\frac{z_2}{x_2}>0.
\]
From $C=0$, we have
\begin{align*}
    d_2-\frac{e_2c_2}{a_2+w}=f_Z(v)>0,
\end{align*}
which force that $w$ is admissible only when $w>w_{xz}$.

It is also easy to see that $m(w_{xy})=m_{xy}$, $m(w_{xz})=m_{xz}$ and some basic properties for $\Gamma$:
\begin{align*}
    &m'(w)=\frac{(a_3\mcI_z^y+e_3d_1)(a_2+w)^2+e_2c_2a_3(a_2-a_1)}{e_1e_3c_1(a_2+w)^2},\quad m''(w)=\frac{2e_2c_2a_3(a_1-a_2)}{e_1e_3c_1(a_2+w)^3},\\
    &m'(w_{xy})=\frac{a_3\Delta_{xy}}{e_1e_2e_3c_1c_2},\quad\textup{and}\quad m'(w_{xz})=\frac{a_3\Delta_{xz}}{e_1e_2e_3c_1c_2}.
\end{align*}

 To find the fold on $\Gamma$, 
we assume that $a_3\mcI_z^y+e_3d_1\neq0$.
we can get 
\[
m'(w)=0\quad \text{ if and only if  } \quad (a_3\mcI_z^y+e_3d_1)(a_2+w)^2=e_2c_2a_3(a_1-a_2).
\]
Therefore, $m'(w)=0$ has a zero for $w>0$ if and only if 
\begin{align}\label{m'=0cond}
\frac{e_2c_2a_3(a_1-a_2)}{(a_3\mcI_z^y+e_3d_1)}>a_2^2.  \end{align}
Whenever \eqref{m'=0cond} holds, the zero of $m'$ is unique and satisfies 

\begin{equation}\label{eq:relation-between-wSN-mSN}
\begin{aligned}
    w=w_{SN}=\sqrt{\frac{e_2c_2a_3(a_1-a_2)}{a_3 \mcI_z^y+e_3d_1}}-a_2,\quad 
    m_{SN}=m(w_{SN})=\frac{e_2c_2a_3}{e_1e_3c_1}\frac{(a_1+w_{SN})^2}{(a_2+w_{SN})^2}.
\end{aligned}
\end{equation}
By \eqref{eq:relation-between-wSN-mSN},
we have $a_1\neq a_2$ and thus $m''(w_{SN})\neq 0$
In this case, $\Gamma$ has at most one fold in $\mathbb{R}_{>0}^4$. 
The admissibility of $w_{SN}$ will be characterized in
\Cref{thm:structure-branch}.

\begin{remark}~
    \begin{enumerate}
        \item If $a_1=a_2$, then 
        \eqref{eq:generic-assumption-Delta-xz} implies 
        $a_3\mcI_z^y+e_3d_1\neq 0$, and thus $m'(w)\neq 0$ for all $w>0$. Hence, no fold occurs.
        \item If $a_3\mcI_z^y+e_3d_1=0$, then from \eqref{eq:generic-assumption-Delta-xz}, we have $a_1\neq a_2$, so $m'(w)\neq0$ for all $w>0$. Hence, there is no fold on $\Gamma$.
        \item To ensure the existence of $w_{SN}$ in $\mathbb{R}$, we provide the assumption:
        \begin{align}\tag{A3}\label{eq:nondegeneracy-saddle-node}
            (a_1-a_2)(a_3\mcI_z^y+e_3d_1)>0.
        \end{align}
    \end{enumerate}
\end{remark}

\begin{theorem}\label{thm:structure-branch}
Assume that \eqref{eq:generic-assumption-Delta-xz} and \eqref{eq:generic-assumption-I_z^y} hold, $\max\{c_1,c_2\}<1$, $a_2\leq e_2$, $0<\mcI_z^1<\frac{e_2}{a_2}$, and the definition of $w_{xy}$, $w_{xz}$ are the same as above.
\begin{enumerate}
    \item Then $\Gamma$ can be parametrized by $w$ on the interval
    \begin{align*}
    I:=\begin{cases}
        (w_{xz},w_{xy}),&\text{if } \mcI_z^y<0;\\
        (w_{xz},\infty),&\text{if } \mcI_z^y>0.
    \end{cases}
    \end{align*}
    Furthermore, if $\mcI_z^y<0$, then $E^*(w_{xz}^+)=E_{xz}$ and $E^*(w_{xy}^-)=E_{xy}$.
    \item $\Gamma$ has a unique fold at $w=w_{SN}$ if and only if one of the following holds:
    \begin{enumerate}[label=(\arabic*)]
        \item $\Delta_{xy}\Delta_{xz}<0$ and $\mcI_z^y<0$
        \item $\Delta_{xz}<0$ and $\mcI_z^y>0$
    \end{enumerate}
\end{enumerate}
\end{theorem}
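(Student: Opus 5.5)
The plan is to reduce the equilibrium problem to the three scalar equations $A=B=C=0$ in the coordinates \eqref{eq:change-of-coordinate}, and solve them successively for $v$, $m$ and $x$ as explicit functions of $w$. Then I check admissibility ($x>0$, $0<v<1$, $m>0$) to identify the interval $I$. Part (b) will then follow from a sign analysis of $m'(w)$ at the endpoints of $I$.

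\emph{Step 1: $v$ from $C=0$.} Set $g(w):=d_2-\frac{e_2c_2}{a_2+w}$, which is increasing in $w$; the equation $C=0$ reads $f_Z(v)=g(w)$. For fixed $m>0$, $f_Z$ is strictly increasing on $[0,1]$, since $f_Z'(v)=e_3c_3m/(m+(a_3-m)v)^2>0$, with $f_Z(0)=0$ and $f_Z(1)=e_3c_3/a_3$. Hence $v\in(0,1)$ exists, and is unique, if and only if $0<g(w)<e_3c_3/a_3$. The left inequality is $w>w_{xz}$, as already observed. The right inequality is $-\mcI_z^y<\frac{e_2c_2}{a_2+w}$. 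This holds for every $w>0$ when $\mcI_z^y>0$, and is equivalent to $w<w_{xy}$ when $\mcI_z^y<0$. This produces exactly the interval $I$. Solving explicitly gives $v=\frac{m g(w)}{e_3c_3-(a_3-m)g(w)}$, and the denominator is positive on $I$ because $g<e_3c_3/a_3$.

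\emph{Step 2: $m$ and $x$.} Eliminating $v$ between $B=0$ and $C=0$ gives \eqref{eq:relation-m-and-w}. Conversely, for $w\in I$ we set $m=m(w)$ and $v=v(w)$, and must verify that $B=0$ also holds, i.e.\ that the elimination is reversible. I expect this to reduce to the identity relating $f_Y$ and $f_Z$ through the common factor $(1-v)/v$.

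Positivity of $m(w)$ on $I$ is immediate when $\mcI_z^y>0$. When $\mcI_z^y<0$ and $w<w_{xy}$, we have $\frac{e_2c_2}{a_2+w}>-\mcI_z^y$, so the bracket in \eqref{eq:relation-m-and-w} exceeds $e_3d_1>0$.

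Finally, $A=0$ gives $x=1-\frac{c_1vw}{a_1+w}-\frac{c_2w(1-v)}{a_2+w}$, and this is at least $1-\max\{c_1,c_2\}>0$. Thus every $w\in I$ yields exactly one point $(\Phi_m^{-1}(x,v,w),m)\in\Gamma$. Conversely, every point of $\Gamma$ arises this way, because $\Phi_m$ is a bijection onto $\Omega$ and the admissible $w$ must lie in $I$.

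\emph{Endpoints.} As $w\to w_{xz}^+$ we have $g\to0$, so $v\to0$ (hence $y\to0$) and $m\to m_{xz}$. The equation $A=0$ then gives $x\to1-\frac{c_2w_{xz}}{a_2+w_{xz}}=x_2$ and $z=wx\to z_2$, so the branch tends to $E_{xz}$. Symmetrically, as $w\to w_{xy}^-$ we have $g\to e_3c_3/a_3$, so $v\to1$ (hence $z\to0$) and $m\to m_{xy}$. This gives $x\to x_1$ and $my\to w_{xy}x_1$, i.e.\ the branch tends to $E_{xy}$.

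\emph{Part (b).} By the computation preceding the theorem, $m'(w)=0$ forces $(a_2+w)^2$ to equal a fixed constant. So $m'$ has at most one zero for $w>-a_2$, at $w_{SN}$, and there $m''(w_{SN})\neq0$ because $a_1\neq a_2$. Moreover $m'$ has the sign of a function that is monotone in $w$. Hence a fold exists in $I$ if and only if $m'$ has opposite signs at the two ends of $I$.

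When $\mcI_z^y<0$, the endpoint values are $m'(w_{xz})\propto\Delta_{xz}$ and $m'(w_{xy})\propto\Delta_{xy}$, both with the same positive factor; this gives condition (1). When $\mcI_z^y>0$, we have $m'(w)\to\frac{a_3\mcI_z^y+e_3d_1}{e_1e_3c_1}>0$ as $w\to\infty$, so a sign change occurs if and only if $\Delta_{xz}<0$; this gives condition (2). The degenerate situations ($a_1=a_2$, or $a_3\mcI_z^y+e_3d_1=0$) are covered by the preceding Remark: there $m'$ never vanishes, and correspondingly no sign change occurs.

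I expect the main obstacle to be the converse direction in Step 2. One must show that \eqref{eq:relation-m-and-w}, together with $v(w)$ defined using $m=m(w)$, genuinely satisfies $B=0$, and not merely a consequence of it. One must also handle the fact that $v$ depends on $m$, which itself depends on $w$. I would first try to write $B-C=0$ as a linear relation in $(1-v)/v$, which allows the elimination to be reversed explicitly.
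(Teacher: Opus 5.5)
Your proposal is correct and follows essentially the same route as the paper. Both arguments reduce to $A=B=C=0$ in the $(x,v,w)$ coordinates, eliminate $v$ to obtain $m(w)$, read off $I$ from the requirement $0<v<1$, identify the endpoints via $v\to0$ and $v\to1$, and detect the fold from the signs of $m'$ at the ends of $I$. Those signs are proportional to $\Delta_{xz}$ and $\Delta_{xy}$ at the finite ends, and $m'$ has a positive limit at $\infty$ when $\mcI_z^y>0$.

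The converse step you flag is settled by the identity $\frac{1}{c_3}f_Y(v)+\frac{a_3}{e_3c_3}f_Z(v)\equiv 1$, which the paper uses directly. Once $C=0$ it gives $B=\frac{e_1c_1m}{a_1+w}-d_1-c_3+\frac{a_3}{e_3}g(w)$, which vanishes exactly when $m=m(w)$. Your explicit checks of $m(w)>0$, $x\ge 1-\max\{c_1,c_2\}>0$, and monotonicity of the numerator of $m'$ fill in details the paper dismisses as ``easy to see''.
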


\begin{proof}
    For (a), we have shown that $w>w_{xz}$ from the previous argument. From $B=C=0$, we have
    \begin{align*}
        1=\frac{1}{c_3}f_Y(v)+\frac{a_3}{e_3c_3}f_Z(v)=\frac{1}{c_3}\left(\frac{e_1c_1m}{a_1+w}-d_1\right)+\frac{a_3}{e_3c_3}\left(d_2-\frac{e_2c_2}{a_2+w}\right).
    \end{align*}
    Then
    \begin{align}\label{eq:proof-structure}
        \frac{a_3}{e_3}\left(\mcI_z^y+\frac{e_2c_2}{a_2+w}\right)=c_3-\frac{a_3d_2}{e_3}+\frac{a_3}{e_3}\frac{e_2c_2}{a_2+w}=\frac{e_1c_1m}{a_1+w}-d_1>0.
    \end{align}
    Therefore, when $\mcI_z^y<0$, then \eqref{eq:proof-structure} holds if $w<w_{xy}$. When $\mcI_z^y>0$, then \eqref{eq:proof-structure} holds for all $w>w_{xz}$. It is also easy to see that for any $w\in I$, there is a unique positive equilibrium $E^*=(x,v,w)\in\Omega$.
    Furthermore, when $w\to w_{xz}^+$, we have
    \begin{align*}
        d_2-\frac{e_2c_2}{a_2+w}\to0,
    \end{align*}
    so that $v\to0$ and $(x(w),y(w),z(w))\to(x_2,0,z_2)=E_{xz}$. Hence, $E^*(w_{xz}^+)=E_{xz}$. By a similar argument, we have $E^*(w_{xy}^-)=E_{xy}$. 
    
    For (b), the uniqueness is easy to see from the expression of $m'(w)$. When \textit{(1)} holds, $\Delta_{xy}\Delta_{xz}<0$ implies $m'(w_{xy})m'(w_{xz})<0$, and $\mcI_z^y<0$ implies $\Gamma$ connect $E_{xy}$ and $E_{xz}$. By intermediate value theorem, there is $\overline{w}\in(w_{xz},w_{xy})$ such that $m'(\overline{w})=0$. Since the root of $m'(w)$ is at most one in $\mathbb{R}_{>0}$, we have $\overline{w}=w_{SN}$. Moreover, since $m'(w_{xy})m'(w_{xz})<0$, we have $a_1\neq a_2$, so that $m''(w_{SN})\neq0$. Hence, $w=w_{SN}$ is a fold. When (2) holds, we have
    \begin{align*}
        \lim\limits_{w\to\infty}m'(w)=\frac{a_3\mcI_z^y+e_3d_1}{e_1e_3c_1}>0.
    \end{align*}
    Since $\Delta_{xz}<0$, we have $a_1>a_2$. By a similar argument as above, there is a fold at $w=w_{SN}$. In other cases, the expression of $w_{SN}$ provides that there is no fold on $\Gamma$.
\end{proof}

From \Cref{thm:structure-branch}, the classification of the topological structure can be listed in \Cref{tab:classification-bifurcation-and-coexistence}. The corresponding bifurcation diagrams can be found in \Cref{fig:forward} and \Cref{fig:backward}. However, the stability indicated in \Cref{fig:forward} and \Cref{fig:backward} is not guaranteed yet. The stability analysis will be provided in the next subsection.

\begin{table}
    \centering
        \begin{tabular}{|c|c|c|c|c||c|c|c|c|c|}\hline
            \multicolumn{5}{|c||}{$\mcI_z^y>0$, interval: $(w_{xz},\infty)$} & \multicolumn{5}{c|}{$\mcI_z^y<0$, interval: $(w_{xz},w_{xy})$}\\\hline
            case & $\Delta_{xy}$ & $\Delta_{xz}$ & fold & figure & case & $\Delta_{xy}$ & $\Delta_{xz}$ & fold  & figure\\\hline\hline
            (i) & $>0$ & $>0$ & no & \Cref{fig:forward-1} & (v) & $>0$ & $>0$ & no & \Cref{fig:forward-2} \\\hline
            (ii) & $>0$ & $<0$ & yes & \Cref{fig:backward-3} & (vi) & $>0$ & $<0$ & yes & \Cref{fig:backward-2}\\\hline
            (iii) & $<0$ & $>0$ & no & \Cref{fig:forward-1} & (vii) & $<0$ & $>0$ & yes & \Cref{fig:forward-3}\\\hline
            (iv) & $<0$ & $<0$ & yes & \Cref{fig:backward-3} & (viii) & $<0$ & $<0$ & no & \Cref{fig:backward-1}\\\hline
        \end{tabular}
    \caption{The words ``yes'' and ``no'' in the fold column mean $\Gamma$ has and does not have the fold in $I$, respectively.}
    \label{tab:classification-bifurcation-and-coexistence}
\end{table}

\begin{figure}
    \centering
    \begin{subfigure}[t]{.3\textwidth}
        \includegraphics[width=\linewidth]{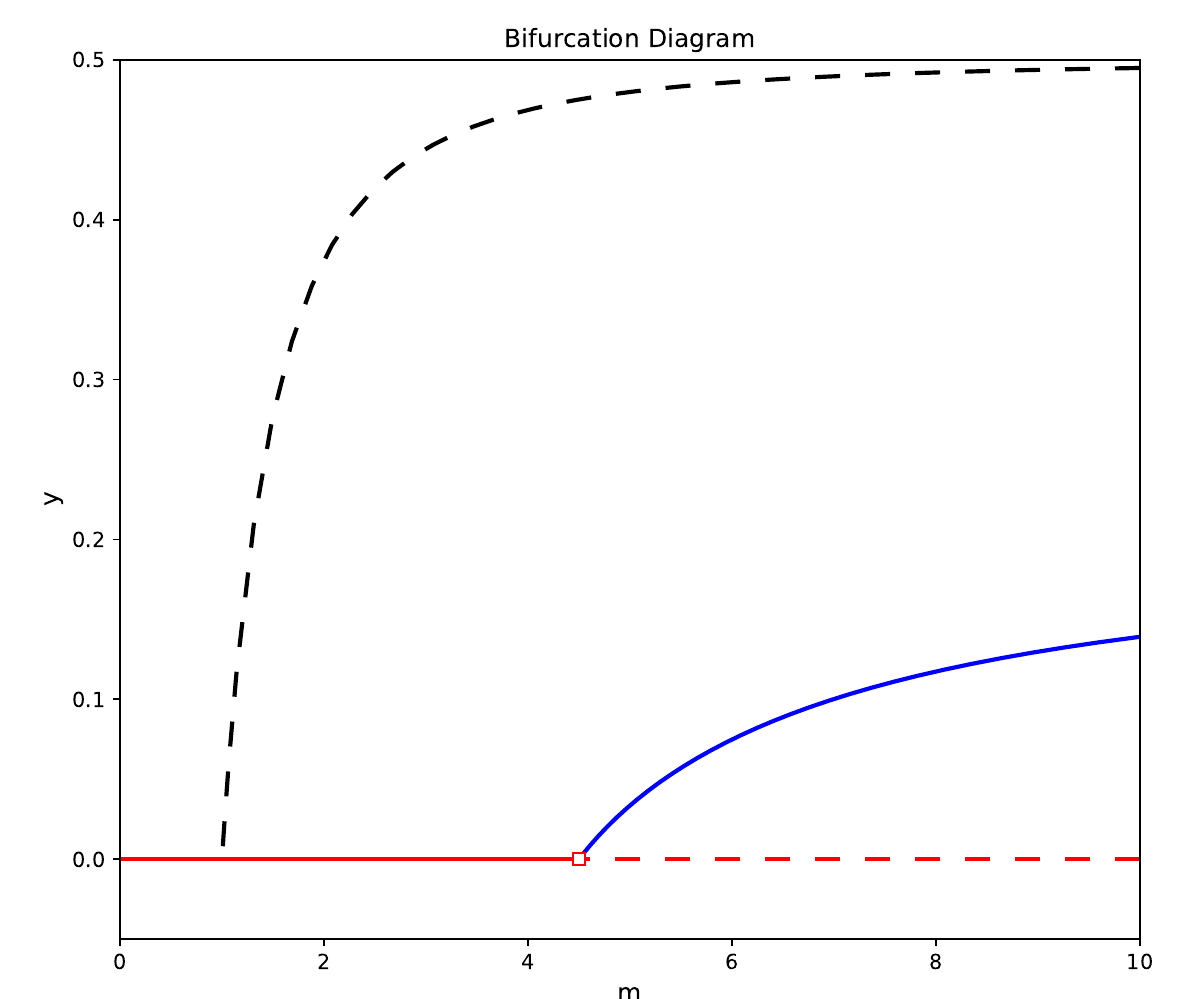}
        \caption{Cases (i) and (iii)}
        \label{fig:forward-1}
    \end{subfigure}\hspace{.3cm}
    \begin{subfigure}[t]{.3\textwidth}
        \includegraphics[width=\linewidth]{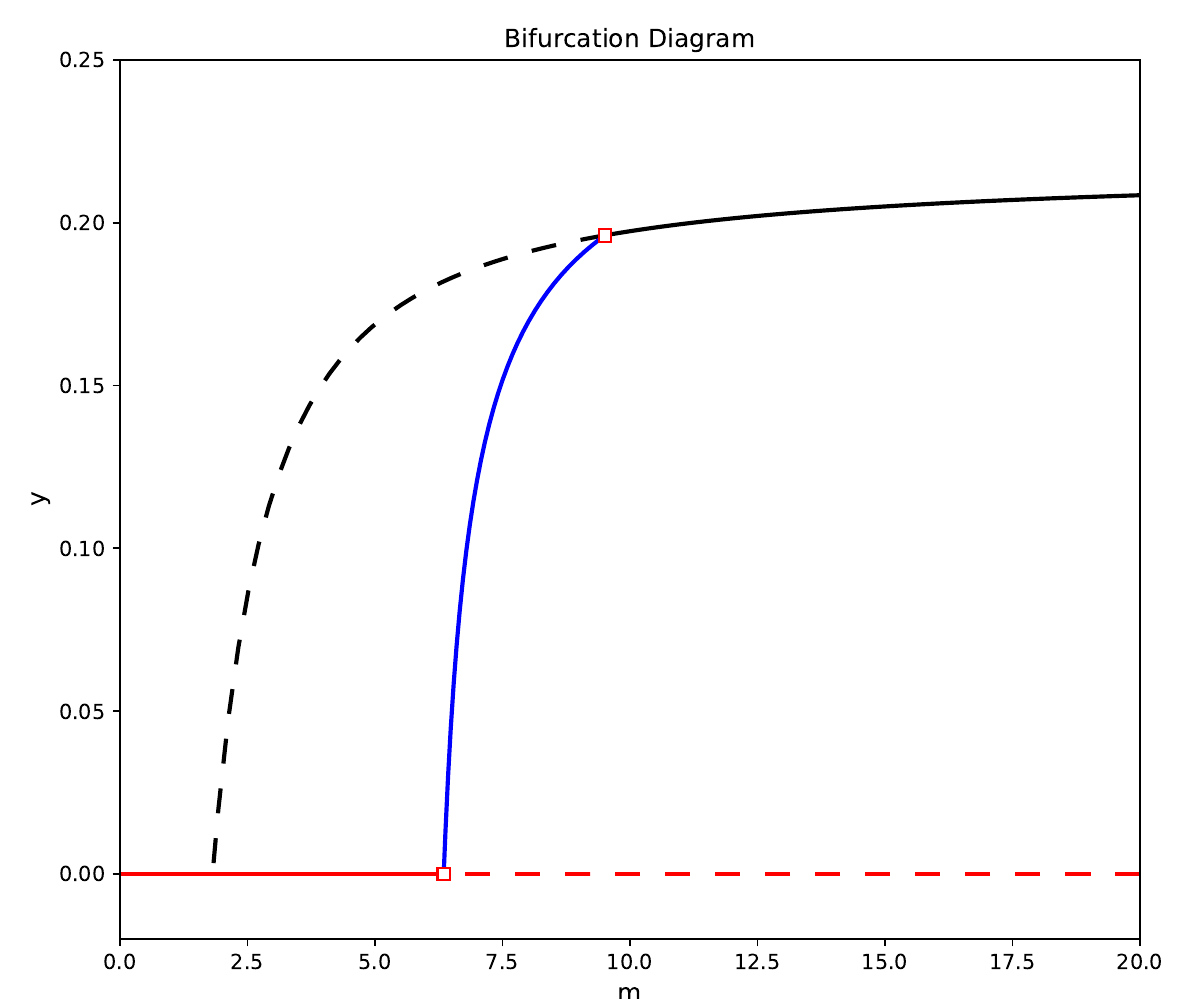}
        \caption{Case (v)}
        \label{fig:forward-2}
    \end{subfigure}\hspace{.3cm}
    \begin{subfigure}[t]{.3\textwidth}
        \includegraphics[width=\linewidth]{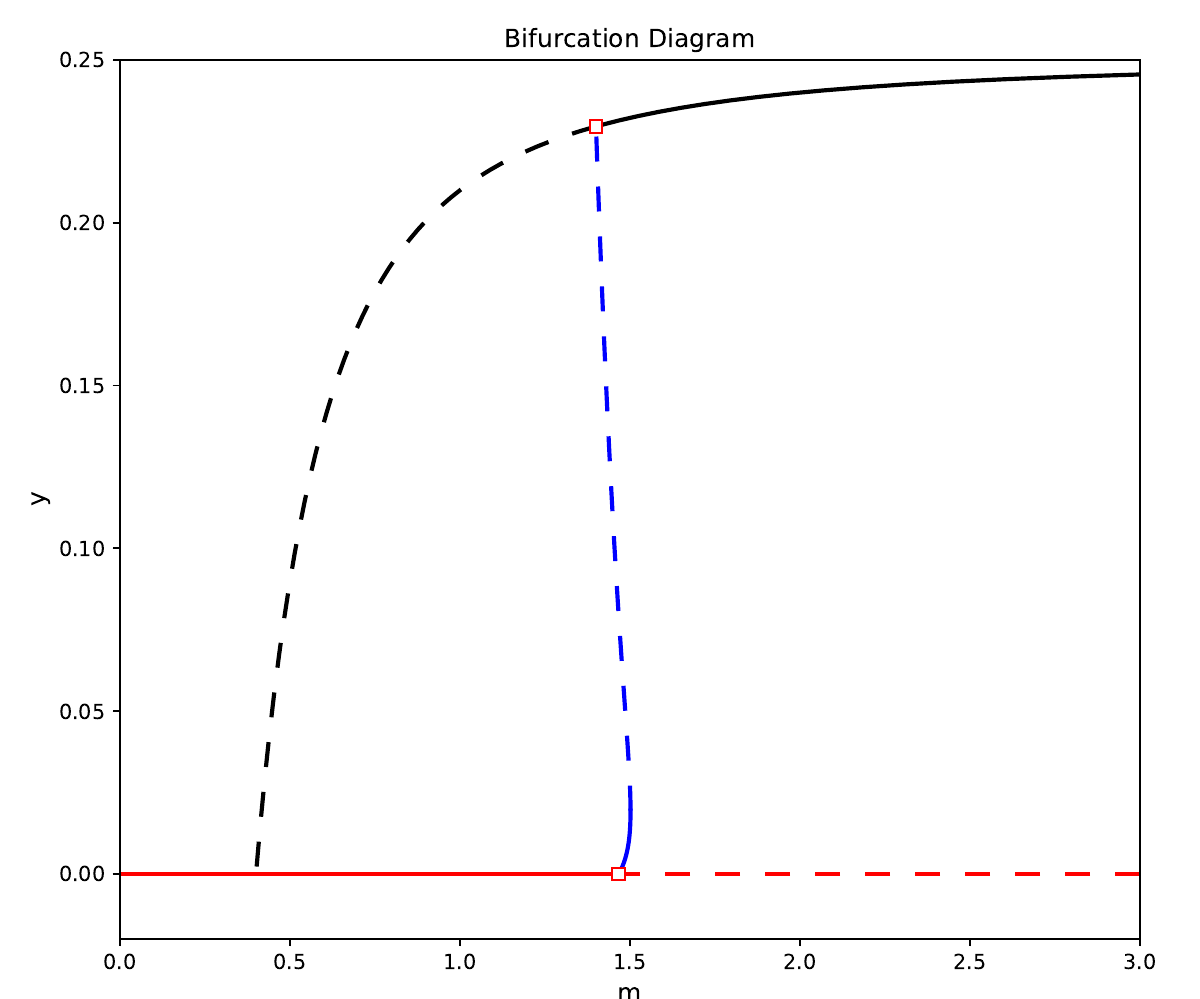}
        \caption{Case (vii)}
        \label{fig:forward-3}
    \end{subfigure}
    \hfill
    \caption{Bifurcation diagrams for the forward bifurcation cases (only the topological structure is ensured). The red line, black line, and blue line in the above figure represent the boundary equilibrium $E_{xz}$, $E_{xy}$, and positive equilibrium $E^*$, respectively.}
    \label{fig:forward}
\end{figure}

\begin{figure}
    \centering
    \begin{subfigure}[t]{.3\textwidth}
        \includegraphics[width=\linewidth]{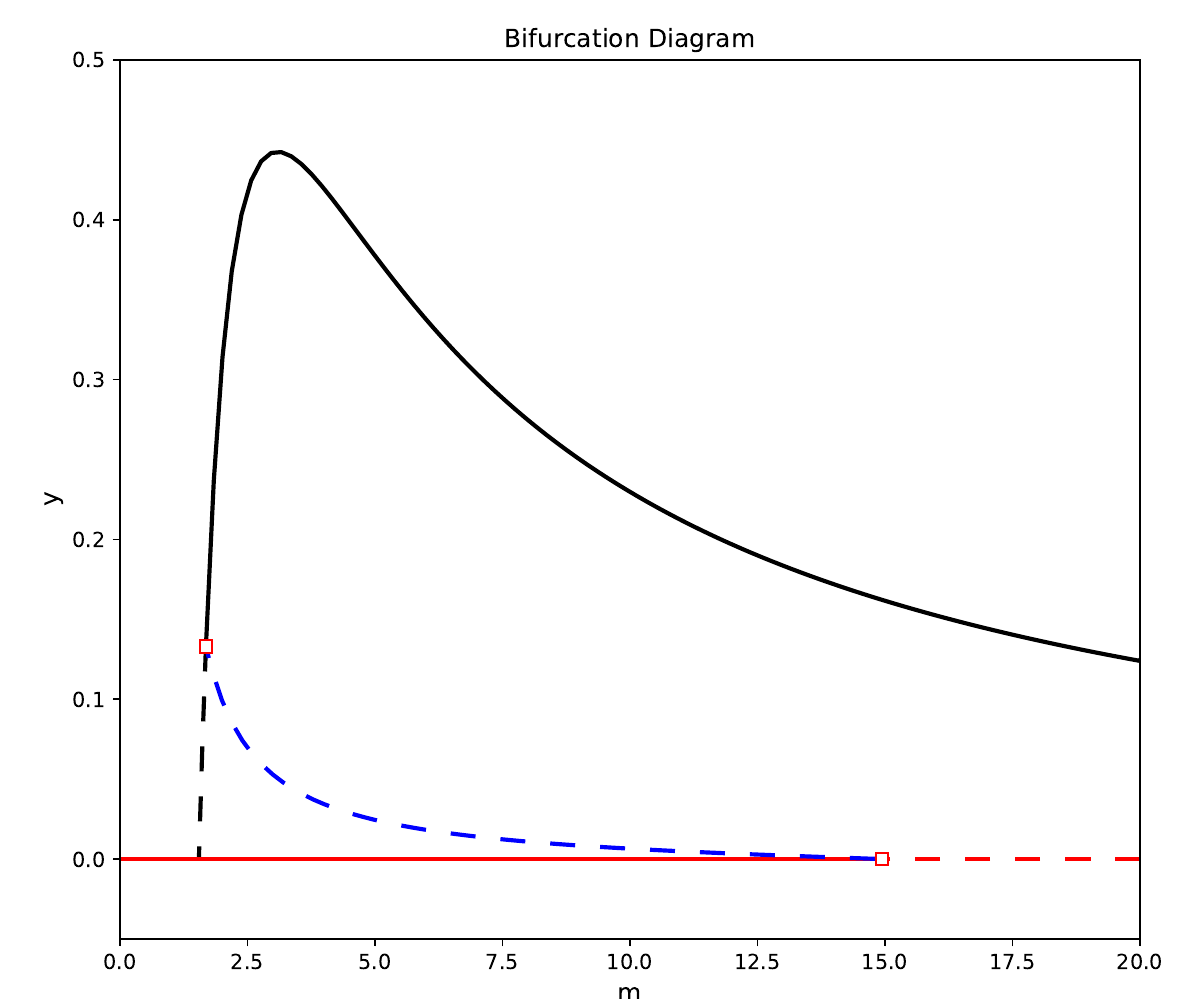}
        \caption{Case (viii)}
        \label{fig:backward-1}
    \end{subfigure}\hspace{.3cm}
    \begin{subfigure}[t]{.3\textwidth}
        \includegraphics[width=\linewidth]{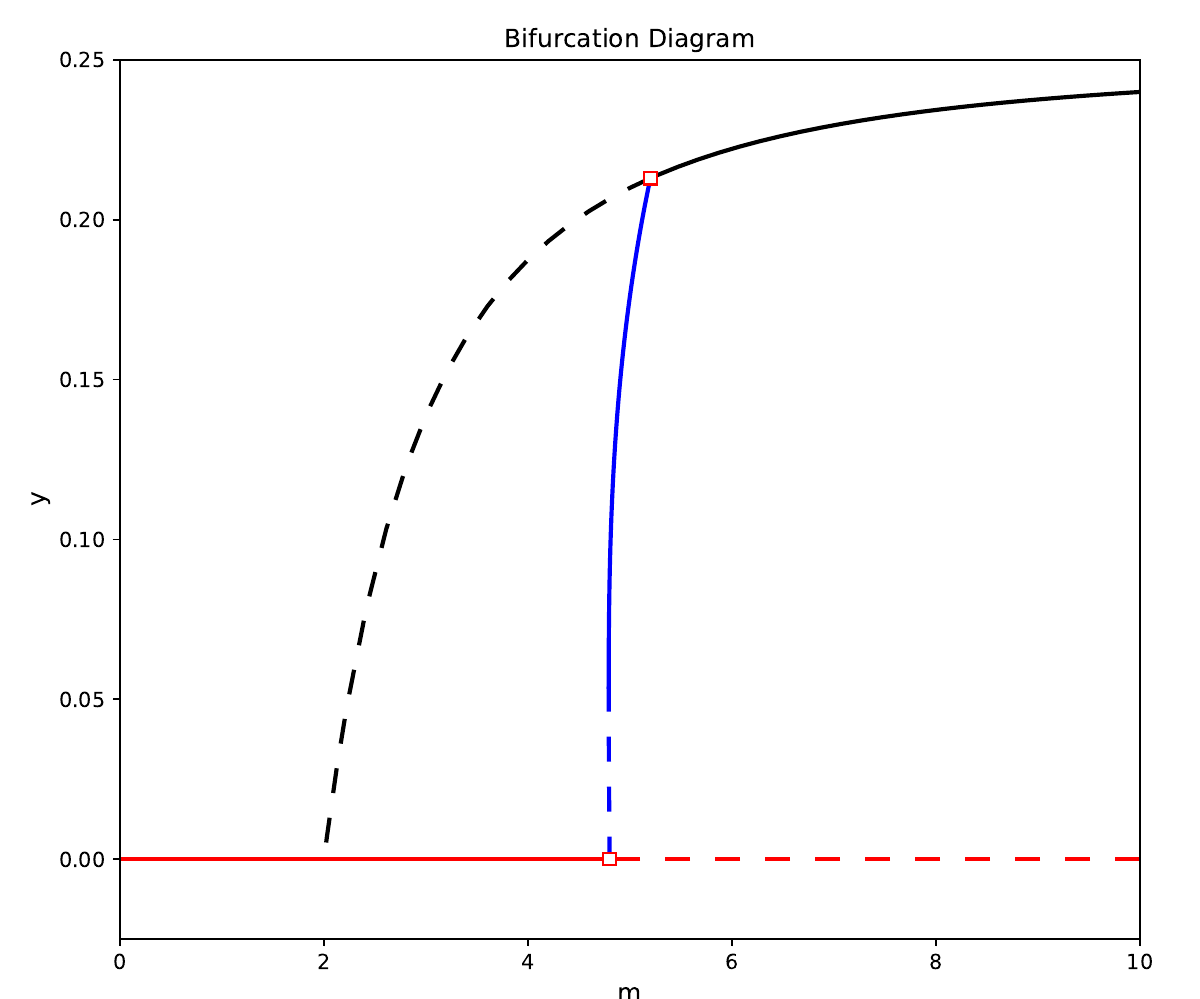}
        \caption{Case (vi)}
        \label{fig:backward-2}
    \end{subfigure}\hspace{.3cm}
    \begin{subfigure}[t]{.3\textwidth}
        \includegraphics[width=\linewidth]{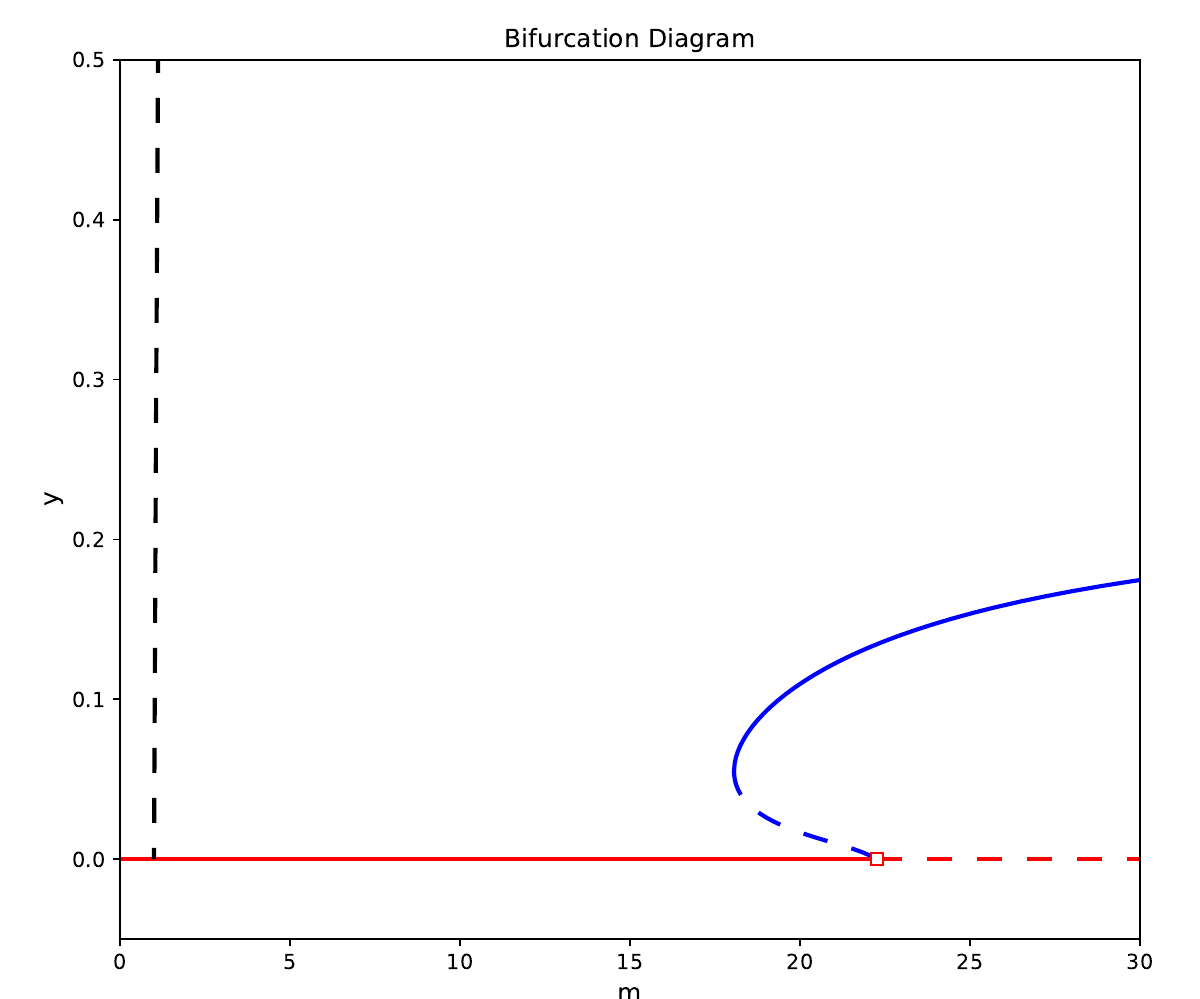}
        \caption{Cases (ii) and (iv)}
        \label{fig:backward-3}
    \end{subfigure}
    \hfill
    \caption{Bifurcation diagrams for the backward bifurcation cases (only the topological structure is ensured). The red line, black line, and blue line in the above figure represent the boundary equilibrium $E_{xz}$, $E_{xy}$, and positive equilibrium $E^*$, respectively.}
    \label{fig:backward}
\end{figure}

\subsection{Local Stability of the Positive Equilibrium}\label{subsec:local-stability-positive-equilibrium}

To study the local stability of the positive equilibria, the characteristic equation of the Jacobian matrix at positive equilibrium $(x,v,w)$ of \eqref{eq:transformed-system} is
\begin{align}\label{eq:characteristic-equation}
    \lambda^3+\alpha_1(w)\lambda^2+\alpha_2(w)\lambda+\alpha_3(w)=0,
\end{align}
where
\begin{align*}
    \alpha_1(w)&=x+\frac{c_3m(e_3-a_3)v(1-v)}{(m+(a_3-m)v)^2}+w\left(\frac{c_1(e_1m-a_1)v}{(a_1+w)^2}+\frac{c_2(e_2-a_2)(1-v)}{(a_2+w)^2}\right),\\
    \alpha_2(w)&=xv(1-v)(f_Y'(v)+f_Z'(v))\\
    &+xw\left(\frac{e_1c_1mv}{(a_1+w)^2}+\frac{e_2c_2(1-v)}{(a_2+w)^2}\right)+\frac{e_1e_3c_1c_3mm'v(1-v)w}{(a_1+w)(m+(a_3-m)v)^2}\\
    &+v(1-v)w\left[\frac{c_3m(a_3-e_3)}{(m+(a_3-m)v)^2}\left(\frac{c_1a_1v}{(a_1+w)^2}
    +\frac{c_2a_2(1-v)}{(a_2+w)^2}\right)\right.\\
    &+\left.\left(\frac{e_1c_1m}{(a_1+w)^2}-\frac{e_2c_2}{(a_2+w)^2}\right)\left(\frac{c_1w}{a_1+w}-\frac{c_2w}{a_2+w}\right)\right],\\
    \alpha_3(w)&=\frac{e_1e_3c_1c_3mm'xv(1-v)w}{(a_1+w)(m+(a_3-m)v)^2}.
\end{align*}

Notice that the sign of $\alpha_3(w)$ is totally determined by $m'(w)$. Let 
\begin{align}\label{H-func}
    H(w)=\alpha_1(w)\alpha_2(w)-\alpha_3(w),
\end{align}
then by Routh-Hurwitz criterion, $E^*$ is locally asymptotically stable if $\alpha_i>0$ and $H>0$ for $i=1,2,3$, and the we can obtain the following proposition.

\begin{proposition}\label{prop:las-positive}
    Along the positive equilibrium branch $\Gamma$, the following statements hold.
    \begin{itemize}
        \item[(a)] If $m'(w)<0$, then $E^*(w)$ is unstable.
        \item[(b)] If $m'(w)=0$, then the Jacobian matrix at $E^*(w)$ has a zero eigenvalue.
        \item[(c)] If $m'(w)>0$, then $E^*(w)$ is locally asymptotically stable if $\alpha_1(w)>0$ and $H(w)>0$.
    \end{itemize}
\end{proposition}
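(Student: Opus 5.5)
The proof rests on the characteristic polynomial \eqref{eq:characteristic-equation} at a point $E^*(w)$ of $\Gamma$. Since the Jacobian of \eqref{eq:transformed-system} is a real $3\times3$ matrix, the product of its eigenvalues equals $\det J=-\alpha_3(w)$. From the explicit formula for $\alpha_3(w)$, the factor
\[
\frac{e_1e_3c_1c_3m\,x\,v(1-v)w}{(a_1+w)(m+(a_3-m)v)^2}
\]
is strictly positive on $\Omega$, because $x,w>0$, $0<v<1$, and $m+(a_3-m)v=m(1-v)+a_3v>0$. Hence $\operatorname{sign}\alpha_3(w)=\operatorname{sign}m'(w)$. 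The three statements then follow from this sign identity.

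The first step is to justify the formula for $\alpha_3$, since the whole argument depends on it. I would compute $\det J$ directly at an equilibrium, where $A=B=C=0$. Then $\partial_x\tilde X=-x$, and $\tilde X$ depends on $v,w$ only through $A$. The row for $\tilde V$ is $v(1-v)(\partial_vB-\partial_vC,\ \partial_wB-\partial_wC)$ with zero $x$-entry. The row for $\tilde W$ is $w(v\nabla B+(1-v)\nabla C-\nabla A)$, because the terms $(B-C)$ coming from differentiating the coefficient $v$ vanish at the equilibrium.

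Adding to the third row a multiple of the first row eliminates the $-\nabla A$ contribution, giving $\det J=-x\,w\,v(1-v)\det M$. Here $M$ is the $2\times2$ matrix with rows $\nabla_{(v,w)}(B-C)$ and $\nabla_{(v,w)}(vB+(1-v)C)$. Since $B$ depends on $v$ only through $-f_Y$ and $C$ only through $f_Z$, this determinant reduces to the Jacobian of $(B,C)$ with respect to $(v,w)$ up to a factor.

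Next I would use implicit differentiation. On the curve $B=C=0$, with $m$ treated as a parameter and $v=v(w)$, the relation \eqref{eq:relation-m-and-w} was obtained by eliminating $v$. Differentiating the identity $B(v(w),w;m(w))=0$ and the analogous identity for $C$ expresses $\det\partial(B,C)/\partial(v,w)$ as a positive multiple of $m'(w)$. The relevant coefficients are $\partial_mB=e_1c_1/(a_1+w)-\partial_m f_Y$ together with $f_Z'(v)>0$. This should reproduce the stated $\alpha_3$, and it is the main computational obstacle. If the bookkeeping is messy, I would instead trust the displayed $\alpha_3(w)$ from the paper, which the proposition is clearly built on.

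With the sign identity in hand, the three cases go as follows.
\begin{itemize}
\item[(a)] If $m'(w)<0$, then $\det J=-\alpha_3>0$. For a real cubic, a positive eigenvalue product means either one real eigenvalue is positive, or, if there is a complex pair, the real eigenvalue is positive since $|\mu|^2>0$. In either case an eigenvalue has positive real part, so $E^*(w)$ is unstable.
\item[(b)] If $m'(w)=0$, then $\alpha_3=0$, so $\lambda=0$ solves \eqref{eq:characteristic-equation}.
\item[(c)] If $m'(w)>0$, then $\alpha_3>0$. If in addition $\alpha_1>0$ and $H=\alpha_1\alpha_2-\alpha_3>0$, then $\alpha_1\alpha_2>\alpha_3>0$ forces $\alpha_2>0$. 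All Routh--Hurwitz conditions for a cubic therefore hold, and $E^*(w)$ is locally asymptotically stable.
\end{itemize}

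Finally, the conclusions are stated for the original system. They transfer back because $\Phi_m$ is a diffeomorphism from $\mathbb{R}^3_{>0}$ onto $\Omega$, so the Jacobians of the two systems at corresponding equilibria are similar.
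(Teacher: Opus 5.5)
Your proposal is correct and is essentially the paper's argument. The paper justifies the proposition only by noting that $\alpha_3(w)$, which equals $-\det J$, has the sign of $m'(w)$, and then invoking Routh--Hurwitz; your eigenvalue-product argument for (a), your observation that $\alpha_2>0$ is forced in (c), and the row reduction $\det J=-x\,v(1-v)\,w\,(B_vC_w-B_wC_v)$ fill in details the paper leaves implicit.

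One small correction to your implicit-differentiation sketch: $C$ also depends on $m$ through $f_Z$, so $\partial_m C\neq 0$. You can bypass this, since directly $B_vC_w-B_wC_v=\frac{c_3m}{(m+(a_3-m)v)^2}\bigl(\frac{e_1e_3c_1m}{(a_1+w)^2}-\frac{a_3e_2c_2}{(a_2+w)^2}\bigr)=\frac{e_1e_3c_1c_3\,m\,m'(w)}{(a_1+w)(m+(a_3-m)v)^2}$, which reproduces the displayed $\alpha_3$ exactly.
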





\begin{remark}
    Due to the parametrization and the expression of $\alpha_3$, we have a strong connection between the stability and $m'(w)$. \Cref{prop:las-positive} provides conditions for the local stability of the positive equilibrium branch. If $m$ is decreasing with respect to $w$, then it is clear that $E^*$ is unstable. However, while $m$ is increasing, it still depends on the sign of $\alpha_1$ and $H$, so we provide \Cref{thm:for-m-large} to ensure the stability when $m$ is large and present the numerical evidence of the complexity of the stability when $m$ is not sufficiently large in \Cref{subsec:numerical-hopf}. When $m'(w)=0$, it might be a fold on the branch, so \Cref{thm:saddle-node} provides sufficient conditions for the saddle-node bifurcation.
\end{remark}

\begin{theorem}\label{thm:for-m-large}
    Assume that \eqref{eq:generic-assumption-Delta-xz} and \eqref{eq:generic-assumption-I_z^y} hold, $0<\mcI_z^1<\frac{e_2}{a_2}$, $a_2\leq e_2$, and $\max\{c_1,c_2\}<1$. If $\mcI_z^y>0$, then there is $\overline m>0$ such that $E^*(m)$ is locally asymptotically stable for all $m>\overline m$.
\end{theorem}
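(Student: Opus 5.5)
We argue along the parametrization of \Cref{thm:structure-branch}. Since $\mcI_z^y>0$, the branch $\Gamma$ is parametrized by $w\in(w_{xz},\infty)$. The formula \eqref{eq:relation-m-and-w} gives $m(w)\sim \frac{a_3\mcI_z^y+e_3d_1}{e_1e_3c_1}\,w\to\infty$ as $w\to\infty$, and
\[
\lim_{w\to\infty}m'(w)=\frac{a_3\mcI_z^y+e_3d_1}{e_1e_3c_1}>0 .
\]
Hence there is $w_0$ such that $m'(w)>0$ for all $w>w_0$. Moreover, $m(w)\to\infty$ only as $w\to\infty$: on any compact subinterval of $[w_{xz},\infty)$ the function $m$ is bounded. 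So we may choose $\overline m>\max_{[w_{xz},w_0]}m$. For every $m>\overline m$, each positive equilibrium $E^*(m)$ then corresponds to some $w>w_0$, where the branch is a graph over $m$ and $\alpha_3(w)>0$. By \Cref{prop:las-positive}(c), it remains to show $\alpha_1(w)>0$ and $H(w)>0$ for all $w$ sufficiently large.

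\textbf{Step 1: asymptotics of $(x,v)$ as $w\to\infty$.} From $C=0$ we get
\[
f_Z(v)=d_2-\frac{e_2c_2}{a_2+w}\to d_2 .
\]
Write $f_Z(v)=\frac{e_3c_3v}{m+(a_3-m)v}$ with $m\sim\kappa w$, where $\kappa:=\frac{a_3\mcI_z^y+e_3d_1}{e_1e_3c_1}$. For $f_Z(v)$ to stay bounded away from $0$ while $m\to\infty$, we need $m(1-v)$ to stay of order one. Solving,
\[
m(1-v)\to \frac{e_3c_3-a_3d_2}{d_2}=\frac{a_3\mcI_z^y}{d_2}>0,
\]
so $1-v\sim \frac{a_3\mcI_z^y}{d_2\kappa w}$ and $v\to1$. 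Then from $A=0$,
\[
x=1-\frac{c_1vw}{a_1+w}-\frac{c_2w(1-v)}{a_2+w}\to 1-c_1>0,
\]
using $\max\{c_1,c_2\}<1$. The denominator $m+(a_3-m)v=m(1-v)+a_3v\to \frac{a_3\mcI_z^y}{d_2}+a_3=\frac{a_3e_3c_3}{a_3 d_2}\cdot\frac{a_3}{e_3}$, which is a positive constant.

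\textbf{Step 2: limits of $\alpha_1$, $\alpha_2$, $\alpha_3$.} In $\alpha_1$, the term $x$ tends to $1-c_1$. The term $\frac{c_3m(e_3-a_3)v(1-v)}{(\cdot)^2}$ tends to the finite constant $c_3(e_3-a_3)\frac{a_3\mcI_z^y}{d_2}\big/\text{const}^2$. The term $w\frac{c_1(e_1m-a_1)v}{(a_1+w)^2}\to c_1e_1\kappa>0$. The last term is $O(w\cdot w^{-1}\cdot w^{-2})\to0$. Hence $\alpha_1$ has a finite limit.

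The main difficulty is that the sign of the $c_3(e_3-a_3)$ term is indefinite. I would handle it by showing directly that
\[
\lim_{w\to\infty}\alpha_1=1-c_1+c_1e_1\kappa+\frac{c_3(e_3-a_3)\,a_3\mcI_z^y/d_2}{(a_3e_3c_3/d_2\cdot 1/e_3\cdot\ldots)^2}>0 .
\]
This means simplifying with $a_3\mcI_z^y+e_3d_1=e_1e_3c_1\kappa$. If that simplification does not produce positivity directly, I would instead rewrite the trace of the Jacobian of \eqref{eq:transformed-system} as a sum of the diagonal terms $x\partial_xA$, $v(1-v)\partial_v(B-C)$ and $w\partial_w(\cdots)$. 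The $v(1-v)$-factor then shows which pieces dominate.

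For $\alpha_2$ and $\alpha_3$, I would similarly compute limits. We have $\alpha_3\to x\cdot\frac{e_1e_3c_1c_3 m' m(1-v)w}{(a_1+w)(\cdot)^2}$, which tends to a positive constant. In $\alpha_2$, the term $xw\frac{e_1c_1mv}{(a_1+w)^2}\to(1-c_1)e_1c_1\kappa$, and the term $x v(1-v)f_Z'(v)$ has a finite limit. The key observation is that $\alpha_2$ contains $\alpha_3/x$-type terms plus $x\cdot(\text{positive})$. So $H=\alpha_1\alpha_2-\alpha_3$ should have a limit of the form
\[
(\text{terms of }\alpha_1\text{ other than }x)\cdot\alpha_2+x\bigl(\alpha_2-\alpha_3/x\bigr),
\]
which is positive.

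\textbf{Step 3: conclusion.} Once the limits $\alpha_1^\infty>0$ and $H^\infty>0$ are established, continuity gives $\alpha_1,H>0$ for $w>w_1$. Enlarging $\overline m$ so that $m>\overline m$ forces $w>\max\{w_0,w_1\}$, and \Cref{prop:las-positive}(c) then yields local asymptotic stability of $E^*(m)$.

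The main obstacle I expect is the bookkeeping of the $O(1)$ limits, in particular the term involving $e_3-a_3$, whose sign is not fixed by the hypotheses. I expect to need the relation $m(1-v)\to a_3\mcI_z^y/d_2$ together with $\mcI_z^y>0$ to cancel or dominate it.
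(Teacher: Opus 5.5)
Your route is the paper's: parametrize by $w$, let $w\to\infty$, compute the limits of $\alpha_1,\alpha_2,\alpha_3,H$, and apply Routh--Hurwitz. Your extra argument is also fine and worth keeping: since $m$ is bounded on $[w_{xz},w_0]$, any $m>\overline m$ forces $w>w_0$. The paper leaves this implicit.

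\textbf{The gap.} The proposal stops exactly where the content of the theorem lies: you never show $\lim\alpha_1>0$ or $\lim H>0$.
\begin{itemize}
\item Your displayed limit of $\alpha_1$ is left unfinished, with an ellipsis and a fallback plan.
\item Your claim that the non-$m'$ part of $\alpha_2$ is ``$x\cdot(\text{positive})$'' is unsupported. It contains the same sign-indefinite $(e_3-a_3)$ term.
\item Your simplification of the key constant is wrong. $\frac{a_3\mcI_z^y}{d_2}+a_3=\frac{a_3(\mcI_z^y+d_2)}{d_2}=\frac{e_3c_3}{d_2}$, whereas the expression you wrote equals $\frac{a_3c_3}{d_2}$. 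With that constant the cancellation you hoped for cannot appear.
\end{itemize}

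\textbf{The missing idea.} Eliminate $v$ using the equilibrium relations $B=C=0$. Since $f_Yf_Z=\frac{e_3c_3^2mv(1-v)}{(m+(a_3-m)v)^2}$, you have
\[
\frac{c_3mv(1-v)}{(m+(a_3-m)v)^2}=\frac{f_Y(v)f_Z(v)}{e_3c_3},\qquad f_Y=\frac{e_1c_1m}{a_1+w}-d_1\to\frac{a_3\mcI_z^y}{e_3},\qquad f_Z=d_2-\frac{e_2c_2}{a_2+w}\to d_2 .
\]
Hence the $(e_3-a_3)$-term of $\alpha_1$ tends to $\frac{(e_3-a_3)a_3d_2\mcI_z^y}{e_3^2c_3}$. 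Now add $c_1e_1\kappa=\frac{a_3\mcI_z^y+e_3d_1}{e_3}$ and use $e_3c_3=a_3(\mcI_z^y+d_2)$, which is just the definition of $\mcI_z^y$. The negative piece $-a_3^2d_2\mcI_z^y$ then cancels exactly, giving
\[
R^\infty:=\lim_{w\to\infty}(\alpha_1-x)=\frac{a_3^2(\mcI_z^y)^2+e_3a_3d_2\mcI_z^y+e_3^2c_3d_1}{e_3^2c_3}>0 .
\]
This is where $\mcI_z^y>0$ is used.

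\textbf{Finishing with your decomposition.} The same substitution shows that every term of $\alpha_2$ other than the $m'$-term tends to $(1-c_1)R^\infty$. The $m'$-term is exactly $\alpha_3/x$, so $\lim(\alpha_2-\alpha_3/x)=(1-c_1)R^\infty$. Then $\lim\alpha_2=(1-c_1)R^\infty+\lim(\alpha_3/x)>0$, and your decomposition gives
\[
\lim_{w\to\infty}H=R^\infty\bigl((1-c_1)^2+\lim_{w\to\infty}\alpha_2\bigr)>0,
\]
which is the limit the paper obtains. Supplying this computation closes the gap; without it, Step 3 has nothing to apply continuity to.
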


The proof of \Cref{thm:for-m-large} is shown in \Cref{appendix:proofs} and is based on \Cref{prop:las-positive}.

We emphasize the nonmonotonicity of $m$ in a biological sense. When $\mcI_z^y>0$, sufficiently strong relative predation efficiency $m$ produces a stable coexistence state whose limiting population densities remain strictly positive. In contrast, when $\mcI_z^y<0$, the admissible interval is $w\in(w_{xz},w_{xy})$. Therefore, the coexistence steady state cannot persist for arbitrarily large $m$. 



At the end of this section, we study the saddle-node bifurcation. 
In our case, the only possible fold occurs at $w=w_{SN}$, which is a point that provides a zero root of the characteristic equation. 


\begin{theorem}\label{thm:saddle-node}
    Assume that \eqref{eq:generic-assumption-Delta-xz}, \eqref{eq:generic-assumption-I_z^y}, and \eqref{eq:nondegeneracy-saddle-node} hold, $0<\mcI_z^1<\frac{e_2}{a_2}$, $a_2\leq e_2$, and $\max\{c_1,c_2\}<1$. If $a_3<e_3$, $a_1\leq e_1m_{SN}$, $c_1\leq c_2$, and $a_1>a_2$, then the system \eqref{eq:main-model-1}--\eqref{eq:main-model-3} undergoes a saddle-node bifurcation at $m=m_{SN}$ whenever $w_{SN}$ is in the admissible interval. Moreover, there exist $\varepsilon>0$ and a neighborhood $U$ of $E^*(w_{SN})$ such that for $m\in(m_{SN},m_{SN}+\varepsilon)$, the system has exactly two positive equilibria in $U$; while for
     $m\in(m_{SN}-\varepsilon,m_{SN})$, no positive equilibrium in $U$.
\end{theorem}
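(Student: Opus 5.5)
The plan is to avoid a direct application of Sotomayor's theorem in the original coordinates. Instead I will work with the transformed system \eqref{eq:transformed-system} and the explicit parametrization of $\Gamma$ from \Cref{thm:structure-branch}. Since $(x,y,z,m)\mapsto(\Phi_m(x,y,z),m)$ is a diffeomorphism, it suffices to prove the saddle-node statement for \eqref{eq:transformed-system}.

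\textbf{Step 1: the shape of $\Gamma$ near the fold.} By the proof of \Cref{thm:structure-branch}, every positive equilibrium has the form $(x(w),v(w),w)$ with $m=m(w)$ given by \eqref{eq:relation-m-and-w}. The map $w\mapsto (x(w),v(w),w,m(w))$ is smooth and injective, because $w$ is itself a coordinate. Hence, near $E^*(w_{SN})$, the zero set of $\bm{\Tilde F}$ in $\Omega\times\mathbb R_{>0}$ is exactly the image of a small $w$-interval around $w_{SN}$. Assumption \eqref{eq:nondegeneracy-saddle-node} gives $m'(w_{SN})=0$. Moreover, $a_1>a_2$ gives $m''(w_{SN})=\frac{2e_2c_2a_3(a_1-a_2)}{e_1e_3c_1(a_2+w_{SN})^3}>0$, so $m$ has a nondegenerate local minimum at $w_{SN}$. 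Two consequences follow:
\begin{itemize}
\item for $m\in(m_{SN},m_{SN}+\varepsilon)$, the equation $m(w)=m$ has exactly two solutions $w$ near $w_{SN}$, one on each side;
\item for $m\in(m_{SN}-\varepsilon,m_{SN})$, it has none.
\end{itemize}
Choosing $U$ as the image of a small neighborhood of $(x(w_{SN}),v(w_{SN}),w_{SN})$ gives the counting statement. This step needs only that $w_{SN}$ lies in the admissible interval $I$.

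\textbf{Step 2: the zero eigenvalue is simple.} By \Cref{prop:las-positive}(b), $\alpha_3(w_{SN})=0$. For a saddle-node in the Sotomayor sense, the zero eigenvalue must be simple, i.e.\ $\alpha_2(w_{SN})\neq0$. I will show $\alpha_2(w_{SN})>0$ term by term, and this is where the extra hypotheses enter.
\begin{itemize}
\item \emph{First part of $\alpha_2$.} A direct computation gives $f_Y'(v)+f_Z'(v)=\frac{c_3m(e_3-a_3)}{(m+(a_3-m)v)^2}$, which is positive since $a_3<e_3$. The first term of the bracket carries the opposite factor $(a_3-e_3)$. Grouping the two, the coefficient of $\frac{c_3m(e_3-a_3)v(1-v)}{(m+(a_3-m)v)^2}$ becomes
\[
x-\frac{c_1a_1vw}{(a_1+w)^2}-\frac{c_2a_2(1-v)w}{(a_2+w)^2}.
\]
Using $A=0$, this equals $1-c_1v\frac{w(2a_1+w)}{(a_1+w)^2}-c_2(1-v)\frac{w(2a_2+w)}{(a_2+w)^2}$. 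Since $\frac{w(2a+w)}{(a+w)^2}<1$, it is at least $1-\max\{c_1,c_2\}>0$.
\item \emph{Middle terms.} The term $xw(\cdots)$ is positive. The term containing $m'$ vanishes at $w_{SN}$.
\item \emph{Last product.} From \eqref{eq:relation-between-wSN-mSN}, $\frac{e_1c_1m_{SN}}{(a_1+w)^2}=\frac{a_3}{e_3}\frac{e_2c_2}{(a_2+w)^2}<\frac{e_2c_2}{(a_2+w)^2}$, using $a_3<e_3$. So the first factor is negative. Because $c_1\le c_2$ and $a_1>a_2$, we have $\frac{c_1}{a_1+w}<\frac{c_2}{a_2+w}$, so the second factor is also negative. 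Their product is therefore positive.
\end{itemize}
Hence $\alpha_2(w_{SN})>0$ and $0$ is a simple eigenvalue. The hypothesis $a_1\le e_1m_{SN}$ makes the $c_1(e_1m-a_1)$ term of $\alpha_1(w_{SN})$ nonnegative, and $a_2\le e_2$, $a_3<e_3$ handle the rest, so $\alpha_1>0$. This pins down the local dynamics: a node-type equilibrium on one side of the fold and a saddle on the other, consistent with \Cref{prop:las-positive}(a).

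\textbf{Step 3: Sotomayor's conditions.} Let $p$ and $q$ be the right and left null vectors of $D\bm{\Tilde F}$ at the fold. The transversality condition $q^\top\partial_m\bm{\Tilde F}\neq0$ follows from Step 1. If it failed, Lyapunov--Schmidt reduction would make the equilibrium set near the fold singular or non-graph-like over $w$, which contradicts its smooth parametrization by $w$ with tangent $(x'(w_{SN}),v'(w_{SN}),1,0)$. Equivalently, differentiating $\bm{\Tilde F}(E^*(w),m(w))=0$ twice in $w$ and pairing with $q$ gives
\[
q^\top D^2\bm{\Tilde F}(p,p)=-m''(w_{SN})\,q^\top\partial_m\bm{\Tilde F}.
\]
So nondegeneracy of the quadratic coefficient also reduces to transversality together with $m''(w_{SN})\ne0$.

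The main obstacle is checking transversality concretely. I would first try $\partial_m\bm{\Tilde F}=(0,\,v(1-v)\partial_m(B-C),\,w\,v\,\partial_mB\cdots)$ and argue it is not in the range of the Jacobian, using the block structure of the Jacobian.
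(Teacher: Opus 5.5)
Your Steps 1 and 2 are sound. Step 1 counts solutions of $m(w)=m$ near the nondegenerate minimum of $m$, and it proves the counting (``moreover'') clause more directly than the paper's normal-form argument. Your bounds $\alpha_1(w_{SN})>0$ and $\alpha_2(w_{SN})>0$ match the paper's and make $0$ a simple eigenvalue.

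The gap is in Step 3. You never establish the transversality condition $q^\top\partial_m\bm{\Tilde{F}}\neq0$, and by your own second-derivative identity the quadratic coefficient depends on it too. Your claim that it ``follows from Step 1'' is false. A smooth equilibrium curve that is a graph over $w$ with a quadratic fold is fully compatible with $q^\top\partial_m\bm{\Tilde{F}}=0$. For example, the reduced equation $g(\eta,\mu)=(\mu-\eta^2)^3$ has the smooth parabola $\mu=\eta^2$ as its zero set, yet $g_\mu(0,0)=0$. Your ``concrete'' attempt, showing $\partial_m\bm{\Tilde{F}}\notin\mathrm{Range}\,D\bm{\Tilde{F}}$ via the block structure, is only announced. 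Without an explicit left null vector it is a messy computation you have not done.

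The missing idea is an explicit left null vector. Since $f_Y(v)+\frac{a_3}{e_3}f_Z(v)\equiv c_3$, the combination $B-\frac{a_3}{e_3}C$ depends on neither $x$ nor $v$. With $L(x,v,w)=\bigl(\frac{1-a_3/e_3}{x},\frac{1-v+(a_3/e_3)v}{v(1-v)},\frac{1-a_3/e_3}{w}\bigr)^\top$ one gets $L^\top\bm{\Tilde{F}}=B-\frac{a_3}{e_3}C=\frac{e_1c_1}{a_1+w}\bigl(m-m(w)\bigr)$. At an equilibrium the term $(DL)^\top\bm{\Tilde{F}}$ vanishes. Hence $\ell:=L(E^*(w_{SN}))$ satisfies $\ell^\top D\bm{\Tilde{F}}=\nabla(L^\top\bm{\Tilde{F}})=\bigl(0,0,-\frac{e_1c_1}{a_1+w_{SN}}m'(w_{SN})\bigr)=0$. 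Because $L$ does not depend on $m$, $\ell^\top\partial_m\bm{\Tilde{F}}=\frac{e_1c_1}{a_1+w_{SN}}\neq0$. Your identity then gives $\ell^\top D^2\bm{\Tilde{F}}(r,r)=-m''(w_{SN})\frac{e_1c_1}{a_1+w_{SN}}\neq0$, which completes Sotomayor's conditions.

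Alternatively, you can close the gap with what you already have. The quantity $\det D\bm{\Tilde{F}}(E^*(w))=-\alpha_3(w)$ is a nonzero multiple of $m'(w)$, so it vanishes to exactly first order at $w_{SN}$. Write the Lyapunov--Schmidt reduced function as $g=(\mu-h(\eta))k$ with $h''(0)\neq0$. Along the branch, $\det D\bm{\Tilde{F}}$ is a nonvanishing multiple of $g_\eta=-h'(\eta)k$. This forces $k(0,0)=g_\mu(0,0)\neq0$. Some argument of this kind is needed; smoothness of the branch alone does not suffice.
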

\begin{proof} 
Since $\Phi_m$ is a diffeomorphism between $\mathbb{R}_{>0}^3$ and $\Omega$ for $m>0$, it suffices to establish the saddle-node bifurcation for the transformed system \eqref{eq:transformed-system}.

First, we prove that
\begin{align}\label{sn-sign}
 \alpha_1(w_{SN})>0,\quad \alpha_2(w_{SN})>0,\quad  \alpha_3(w_{SN})=0.
\end{align}
Since $m'(w_{SN})=0$, the expression of $\alpha_3$ gives $\alpha_3(w_{SN})=0$.
Using $x>0$, $a_3<e_3$, $a_1\leq e_1m_{SN}$, and $a_2\leq e_2$,
from the expression of $\alpha_1$ we see that $\alpha_1(w_{SN})>0$.
To prove $\alpha_2(w_{SN})>0$, set
\begin{align*}
J_1:=x(f_Y'+f_Z')+\frac{c_3m(a_3-e_3)w}{(m+(a_3-m)v)^2}\left(\frac{c_1a_1v}{(a_1+w)^2}+\frac{c_2a_2(1-v)}{(a_2+w)^2}\right).
\end{align*}
Using $f_Y'(v)+f_Z'(v)=c_3m(e_3-a_3)/[m+(a_3-m)v]^2$ and $A(x,v,w)=0$ at a positive equilibrium, we have 
  \begin{align*}
        J_1
        &=\frac{c_3m(e_3-a_3)}{(m+(a_3-m)v)^2}\left(1-\frac{c_1vw}{a_1+w}-\frac{c_2w(1-v)}{a_2+w}-\frac{c_1a_1vw}{(a_1+w)^2}-\frac{c_2a_2(1-v)w}{(a_2+w)^2}\right)\\
        &=\frac{c_3m(e_3-a_3)}{(m+(a_3-m)v)^2}\left(1-c_1v-c_2(1-v)+\frac{c_1va_1^2}{(a_1+w)^2}+\frac{c_2(1-v)a_2^2}{(a_2+w)^2}\right)\\
        &\geq\frac{c_3m(e_3-a_3)}{(m+(a_3-m)v)^2}(1-\max\{c_1,c_2\})>0.
    \end{align*}
 Next, set
 \[
 J_2:=\frac{e_1c_1m}{(a_1+w)^2}-\frac{e_2c_2}{(a_2+w)^2},\quad J_3:=\frac{c_1w}{a_1+w}-\frac{c_2w}{a_2+w}.
 \]
    By using \eqref{eq:relation-between-wSN-mSN} and $a_3<e_3$, we have
\begin{align*}
J_2=\frac{e_2c_2(a_3-e_3)}{e_3(a_2+w_{SN})^2}<0.
\end{align*}
Moreover, since $c_1\leq c_2$ and $c_1a_2\leq c_2a_2< c_2a_1$, we obtain 
\[
J_3=\frac{(c_1-c_2)w_{SN}+(c_1a_2-c_2a_1)}{(a_1+w_{SN})(a_2+w_{SN})}w_{SN}<0.
\]
Therefore, we have $J_2J_3>0$ at $w=w_{SN}$.
Now, using $m'(w_{SN})=0$ and the expression of $\alpha_2(w_{SN})$, we have
\[
\alpha_2(w_{SN})=v(1-v)J_1+xw\left(
        \frac{e_1c_1mv}{(a_1+w)^2}
        +\frac{e_2c_2(1-v)}{(a_2+w)^2}
        \right)
        +v(1-v)wJ_2J_3>0.
\]

    
    Consequently, \eqref{sn-sign} holds, and thus the characteristic polynomial at the fold is
    \[
        \lambda
        \left(
        \lambda^2+\alpha_1(w_{SN})\lambda+\alpha_2(w_{SN})
        \right).
    \]
    Hence, zero is a simple eigenvalue of $D\bm{\Tilde{F}}$ evaluated at the positive equilibrium $(x,v,w)$ with $m=m_{SN}$, while the remaining two
    eigenvalues have strictly negative real parts.


   We next verify the two nondegeneracy conditions for a saddle-node
    bifurcation.
    Let 
    \begin{align*}
      L(x,v,w):=\left(
        \frac{1-\frac{a_3}{e_3}}{x},\frac{1-v+\frac{a_3}{e_3}v}{v(1-v)},\frac{1-\frac{a_3}{e_3}}{w}
   \right)^\top  ,\quad  \ell:=L(x(w_{SN}),v(w_{SN}),w_{SN}).
    \end{align*}
    In $\{(x,v,w):x>0,v\in(0,1),w>0\}$, we have
    \begin{align*}
        L^\top\bm{\Tilde{F}}=\left(1-\frac{a_3}{e_3}\right)\frac{\Tilde{X}}{x}+\left(1-v+\frac{a_3}{e_3}v\right)\frac{\Tilde{V}}{v(1-v)}+\left(1-\frac{a_3}{e_3}\right)\frac{\Tilde{W}}{w}=B-\frac{a_3}{e_3}C=\frac{e_1c_1}{a_1+w}(m-m(w)).
    \end{align*}
    Since $m'(w_{SN})=0$ and $m_{SN}=m(w_{SN})$, we have
    \begin{align*}
        \ell^\top D\bm{\Tilde{F}}(x,v,w)\bigg|_{(x(w_{SN}),v(w_{SN}),w_{SN})}&=\left[\nabla(L^\top\bm{\Tilde{F}})\right]^\top\bigg|_{(x(w_{SN}),v(w_{SN}),w_{SN})}\\
         &=\Big(
            \frac{\partial}{\partial x}\big(B-\frac{a_3}{e_3}C\big),\frac{\partial}{\partial v}\big(B-\frac{a_3}{e_3}C\big),\frac{\partial}{\partial w}\big(B-\frac{a_3}{e_3}C\big)
        \Big)\bigg|_{w=w_{SN}}\\
        &=\Big(
            0,0,\frac{\partial}{\partial w}\big(\frac{e_1c_1}{a_1+w}(m-m(w))\big)\bigg|_{w=w_{SN}}
        \Big)=(0,0,0).
    \end{align*}
   Hence, $\ell$ is a left eigenvector associated with the zero eigenvalue.

    Along the positive equilibrium branch, write 
    \[
    E^*(w):=(x(w),v(w),w)^\top,\quad r:=(E^*)'(w_{SN})=(x'(w_{SN}),v'(w_{SN}),1)^\top.
    \]
    Differentiating $\bm{\Tilde{F}}(E^*(w);m(w))=0$ in $w$ evaluated at $w=w_{SN}$, and using 
   $m'(w_{SN})=0$,
   we obtain
    \begin{align*}
       0=D\bm{\Tilde{F}}(E^*(w_{SN});m_{SN})r+\bm{\Tilde{F}}_m(E^*(w_{SN});m_{SN})m'(w_{SN})=D\bm{\Tilde{F}}(E^*(w_{SN});m_{SN})r.
    \end{align*}
    Hence, $r$ is a right eigenvector associated with the zero eigenvalue.

    The normal form \cite{perko2013differential} of \eqref{eq:transformed-system} at $(E^*,m_{SN})$ can be written as
    \begin{align*}
        \dot\eta=a(m-m_{SN})+b\eta^2+O\left(
        |\eta|^3+|(m-m_{SN})\eta|+(m-m_{SN})^2
        \right),
    \end{align*}
    where
    \begin{align*}
        a&=\frac{\ell^\top\bm{\Tilde{F}}_m(E^*(w_{SN});m_{SN})}{\ell^\top r}=\frac{1}{\ell^\top r}\frac{e_1c_1}{a_1+w_{SN}}\neq0\quad\textup{and}\\
        b&=\frac{1}{2}\frac{\ell^\top D^2\bm{\Tilde{F}}(E^*(w_{SN});m_{SN})(r,r)}{\ell^\top r}=-\frac{1}{2}\frac{\ell^\top\bm{\Tilde{F}}_m(E^*(w_{SN});m_{SN})m''(w_{SN})}{\ell^\top r}\neq0
    \end{align*}
    which is obtained by differentiating $\bm{\Tilde{F}}(E^*(w);m(w)))=0$ twice.
    
    In addition, note that $ab=-a^2m''(w_{SN})/2<0$. Hence, for $m<m_{SN}$ sufficiently close to $m_{SN}$, no positive equilibrium exists near $E^*(w_{SN})$; while $m>m_{SN}$ sufficiently close to $m_{SN}$, there exist exactly two positive equilibria.
    Thus, the proof is complete.
\end{proof}

\begin{remark}
    Under the hypothesis of \Cref{thm:saddle-node}, if we parametrize $E^*$ by $w$, then when $w$ is near $w_{SN}$, the following hold.
    \begin{enumerate}
        \item $E^*(w)$ is a saddle whenever $w<w_{SN}$.
        \item $E^*(w)$ is locally asymptotically stable whenever $w>w_{SN}$.
    \end{enumerate}
\end{remark}

\section{Numerical Results}\label{sec:numerical}

According to the complexity of $H(w)$, the local stability is difficult to analyze. In this section, we present several numerical results for the complex dynamics of \eqref{eq:main-model-1}--\eqref{eq:main-model-3}, including multiple attractors, the Hopf bifurcation, and chaos, using the software Auto07p and MATLAB Online. 

\subsection{Multiple Attractors}

We use MATLAB online to demonstrate the multiple attractors.
First, we consider the alternative extinction. \Cref{lemma:consumer-extinction} and \Cref{lemma:y-extinct} illustrate the extinction of consumers, and IG prey can still depend on the density of the species. \Cref{fig:bistable-E0-E1} 
illustrates two possible long-term outcomes for strictly positive initial data: convergence to $E_0$ and $E_1$ with $\mcI_y^1=-0.75<0$ and $\mcI_z^1=-0.0909<0$. \Cref{fig:bistable-E0-Exz} shows the convergence to $E_0$ and $E_{xz}$ with $a_1-a_2=0.5>0$, $\frac{a_2}{d_2}\mcI_z^1=0.0909$, and $\frac{a_1}{d_1}\mcI_y^1=0.0125$. 

\begin{figure}
    \centering
    \begin{subfigure}[t]{.45\textwidth}
        \begin{subfigure}{.49\textwidth}
            \includegraphics[width=\linewidth]{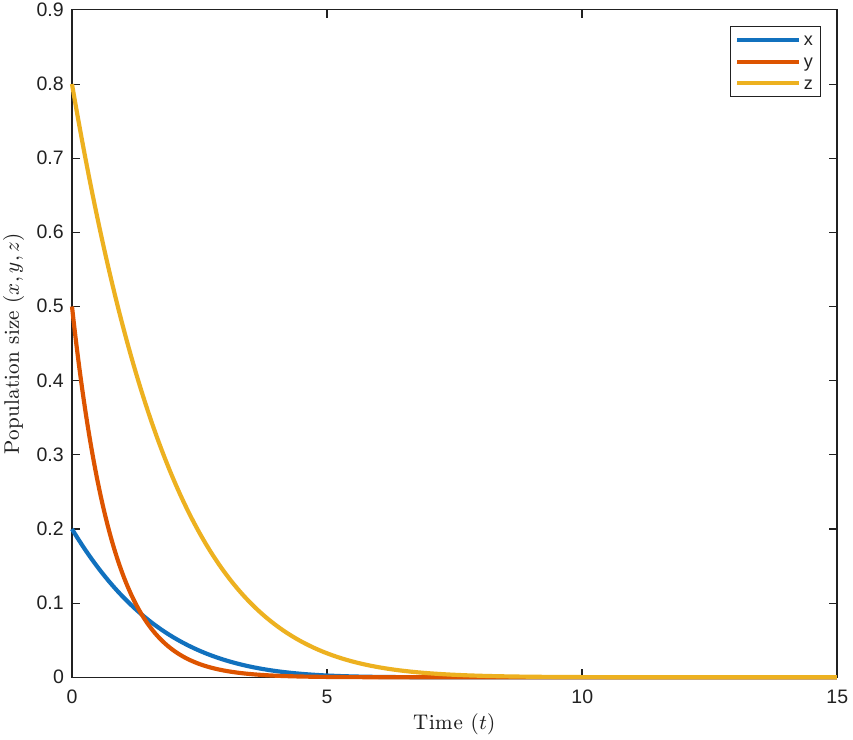}
        \end{subfigure}\hfill
        \begin{subfigure}{.49\textwidth}
            \includegraphics[width=\linewidth]{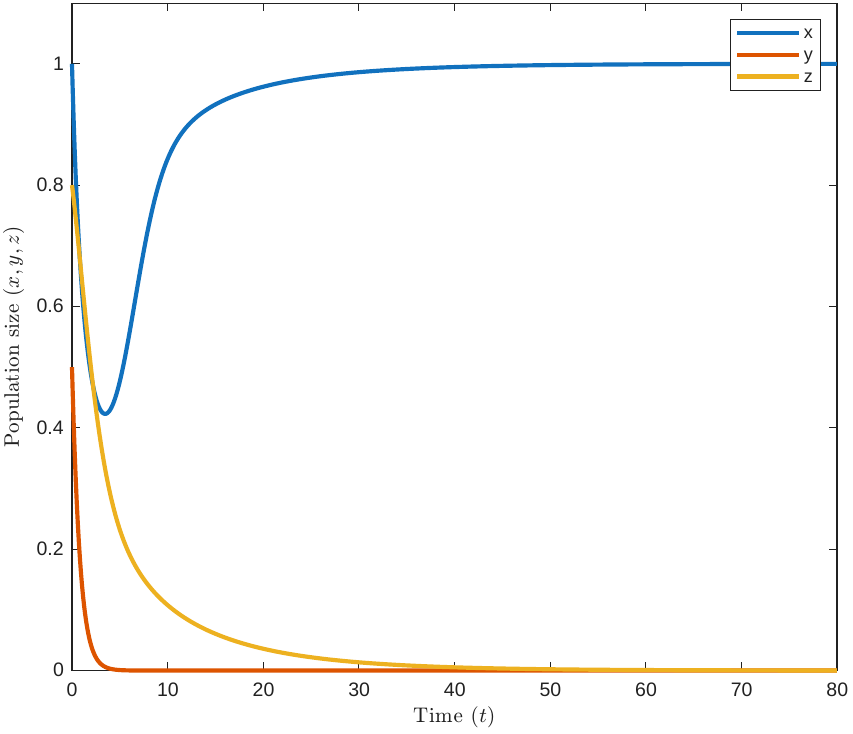}
        \end{subfigure}
        \caption{$c_1=0.5$, $c_2=3$, $c_3=0.5$, $e_1=1$, $e_2=1$, $e_3=1$, $a_1=2$, $a_2=3.3$, $a_3=1$, $d_1=1$, $d_2=1$, $m=1$. Initial data are $(0.2,0.5,0.8)$ and $(1,0.5,0.8)$.}
        \label{fig:bistable-E0-E1}
    \end{subfigure}\hfill
    \begin{subfigure}[t]{.45\textwidth}
        \begin{subfigure}{.49\textwidth}
            \includegraphics[width=\textwidth]{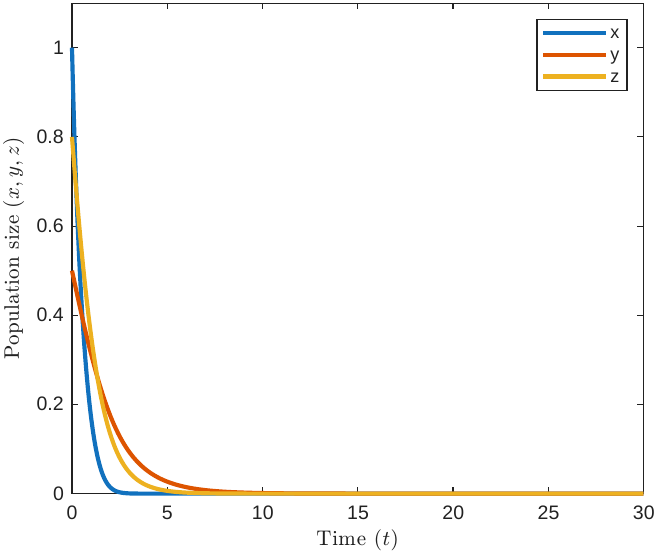}
        \end{subfigure}
        \begin{subfigure}[t]{.49\textwidth}
            \includegraphics[width=\textwidth]{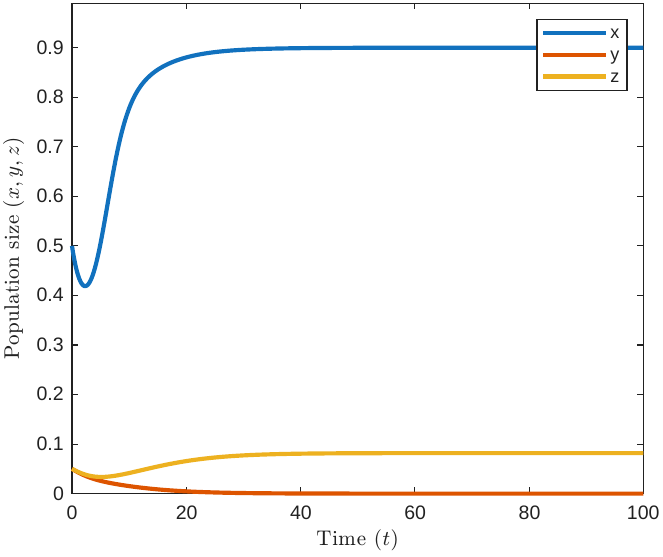}
        \end{subfigure}
        \caption{$c_1=6.75$, $c_2=0.65$, $c_3=0.10$, $e_1=0.09$, $e_2=1$, $e_3=1$, $a_1=1$, $a_2=0.5$, $a_3=1$, $d_1=0.6$, $d_2=1.1$, $m=1$. Initial data are $(1,0.5,0.8)$ and $(0.50,0.05,0.05)$.}
        \label{fig:bistable-E0-Exz}
    \end{subfigure}
    \caption{The figures on the left suggest that $E_0$ and $E_1$ are attractors. The figures on the right suggest that $E_0$ and $E_{xz}$ are attractors.}
    \label{fig:bistability-of-extinction}
\end{figure}

The next case is the numerical result for the different types of coexistence. \Cref{fig:bistable-positive-eq-positive-periodic} demonstrates bistability between a positive equilibrium and a positive periodic solution. Therefore, the type of coexistence of \eqref{eq:main-model-1}--\eqref{eq:main-model-3} might depend on the initial data.

\begin{figure}\centering
    \begin{subfigure}{.35\textwidth}
        \includegraphics[width=\textwidth]{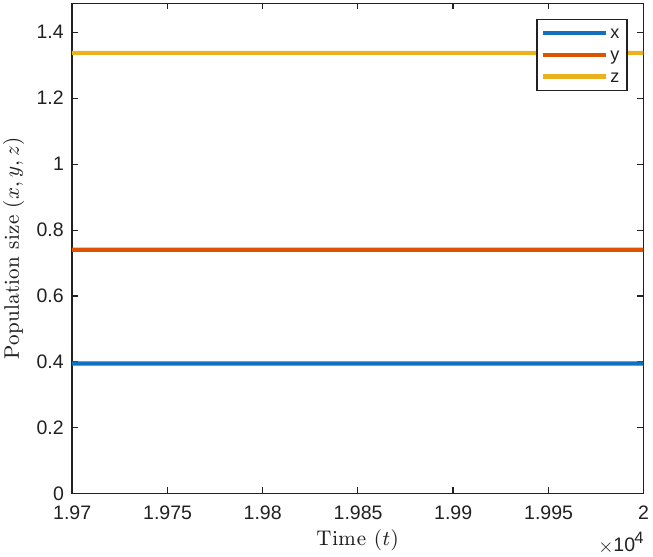}
    \end{subfigure}\hspace{1cm}
    \begin{subfigure}[t]{.35\textwidth}
        \includegraphics[width=\textwidth]{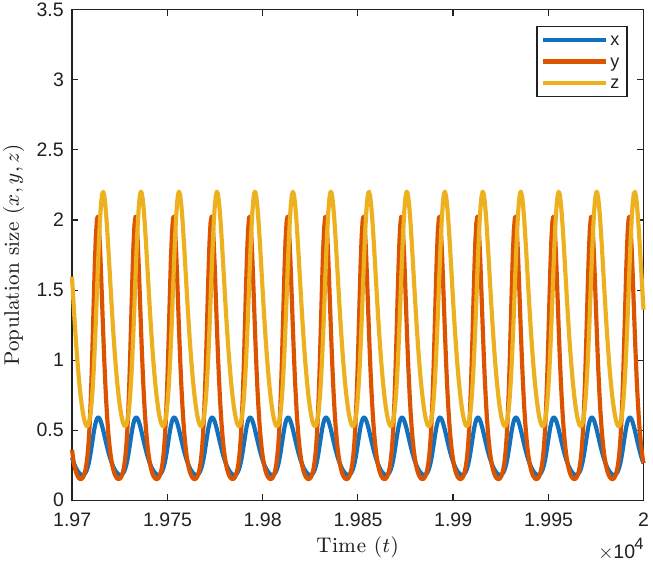}
    \end{subfigure}
    \caption{The figures suggest that both positive equilibrium and positive periodic solution are stable. The parameters are chosen by $c_1=0.09$, $c_2=1.17$, $c_3=1.98$, $e_1=263.00$, $e_2=0.01$, $e_3=0.32$, $a_1=8.87$, $a_2=0.04$, $a_3=0.47$, $d_1=1.13$, $d_2=0.28$, $m=1.78$. The initial data are chosen by $(0.39,0.74,1.34)$ and $(0.43,0.78,0.45)$.}
    \label{fig:bistable-positive-eq-positive-periodic}
\end{figure}

\subsection{Hopf Bifurcation}\label{subsec:numerical-hopf}

In an intraguild predation system, the additional trophic link may either stabilize or destabilize the positive equilibrium, depending on $m$.
In \Cref{sec:coexistence}, we have classified the structure of $\Gamma$ into eight cases by using the quantities $\mcI_z^y$, $\Delta_{xy}$, and $\Delta_{xz}$, and provide the results when $m$ is large and small. However, these results can not determine the stability of the positive equilibrium. We therefore apply the Hopf-bifurcation theorem \cite{liu1994criterion} to search for the parameters, use AUTO-07p to continue the positive equilibrium, and present the results of the Hopf bifurcation for each case. 

\Cref{fig:Hopf-bifurcation} presents numerically the possibility of the existence of a periodic solution that bifurcates from the positive equilibrium. The cases with $\mcI_z^y>0$ are displayed in \Cref{fig:Hopf-bifurcation}, while those with $\mcI_z^y<0$ are displayed in \Cref{fig:Hopf-bifurcation}. The solid and dashed lines represent the stable and unstable branches, respectively. The black line represents the positive equilibrium that bifurcates from $E_{xz}$, and the green lines represent the maximum and the minimum of the periodic solution. The used parameters are as follows.

\begin{itemize}
    \item \Cref{fig:hopf-1}: $c_1=0.83$, $c_2=0.16$, $c_3=2.95$, $e_1=14.20$, $e_2=0.69$, $e_3=0.25$, $a_1=0.23$, $a_2=0.07$, $a_3=1.77$, $d_1=0.38$, $d_2=0.09$
    \item \Cref{fig:hopf-2}: $c_1=0.48$, $c_2=0.09$, $c_3=3.85$, $e_1=8.52$, $e_2=1.05$, $e_3=1.55$, $a_1=0.25$, $a_2=0.07$, $a_3=4.03$, $d_1=0.04$, $d_2=1.03$
    \item \Cref{fig:hopf-5}: $c_1=0.49$, $c_2=0.64$, $c_3=0.55$, $e_1=13.30$, $e_2=0.68$, $e_3=0.34$, $a_1=0.20$, $a_2=0.17$, $a_3=6.53$, $d_1=0.32$, $d_2=0.03$
    \item \Cref{fig:hopf-6}: $c_1=0.60$, $c_2=0.44$, $c_3=11.79$, $e_1=12.58$, $e_2=2.98$, $e_3=0.30$, $a_1=1.19$, $a_2=0.65$, $a_3=2.29$, $d_1=8.42$, $d_2=1.61$
\end{itemize}

\begin{figure}
    \centering
    \begin{subfigure}[t]{.24\textwidth}
        \includegraphics[width=\linewidth]{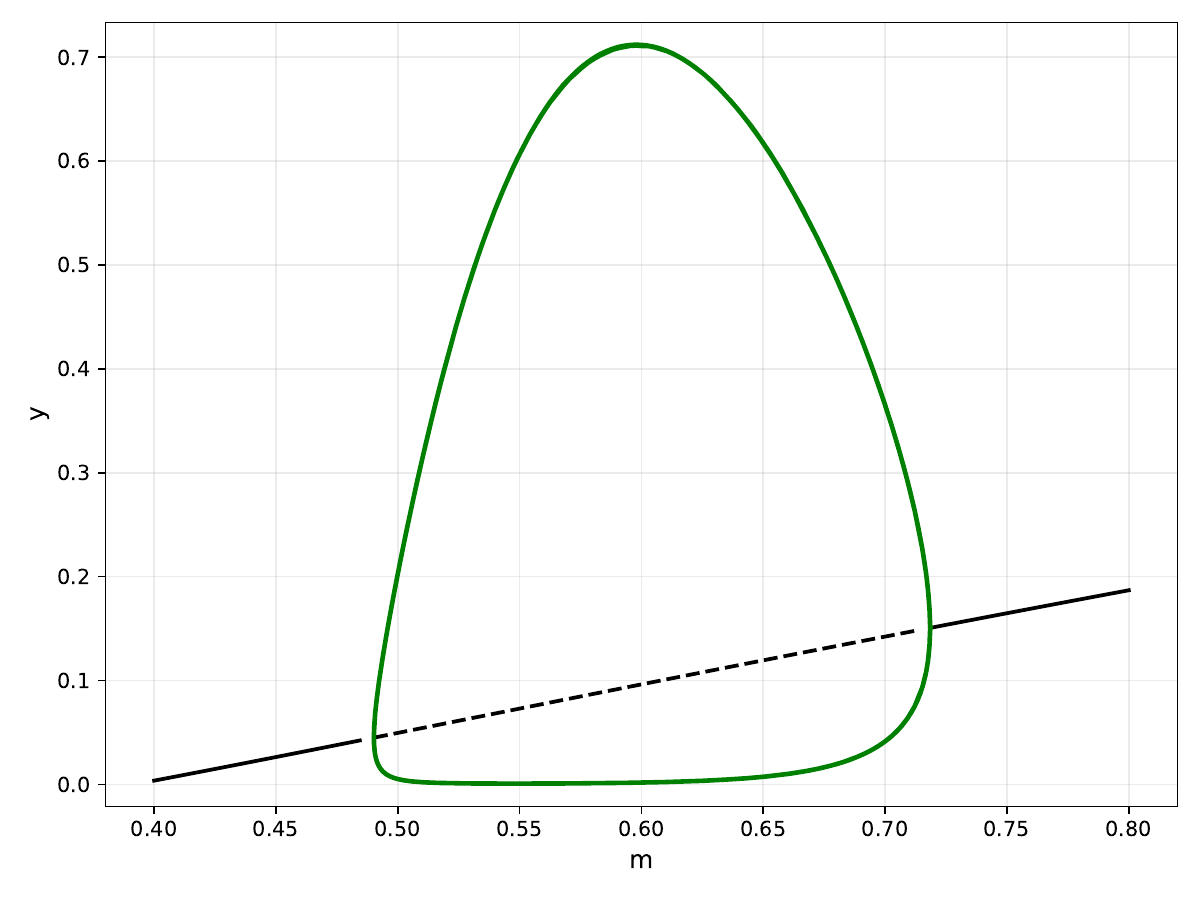}
        \caption{Case (i)}
        \label{fig:hopf-1}
    \end{subfigure}\hfill
    \begin{subfigure}[t]{.24\textwidth}
        \includegraphics[width=\linewidth]{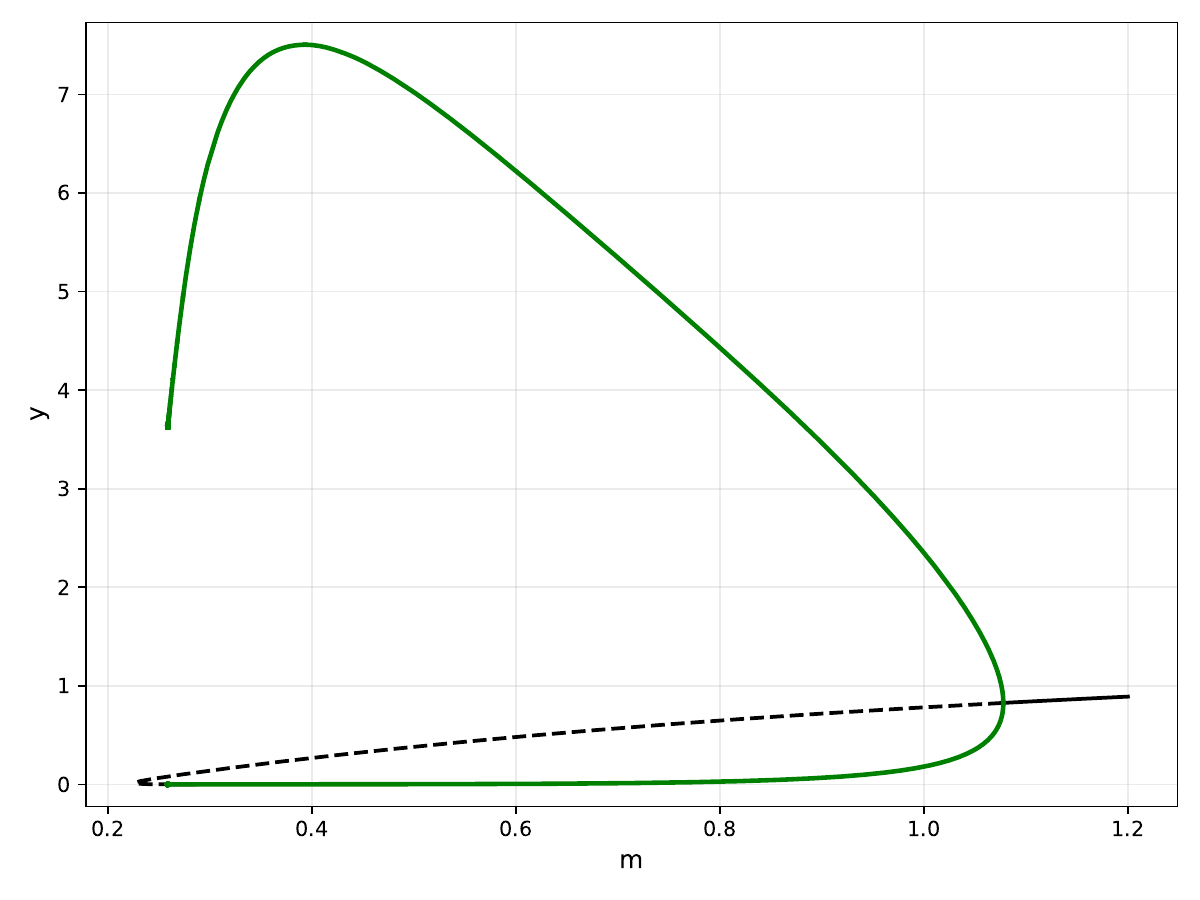}
        \caption{Case (ii)}
        \label{fig:hopf-2}
    \end{subfigure}\hfill
    \begin{subfigure}[t]{.24\textwidth}
        \includegraphics[width=\linewidth]{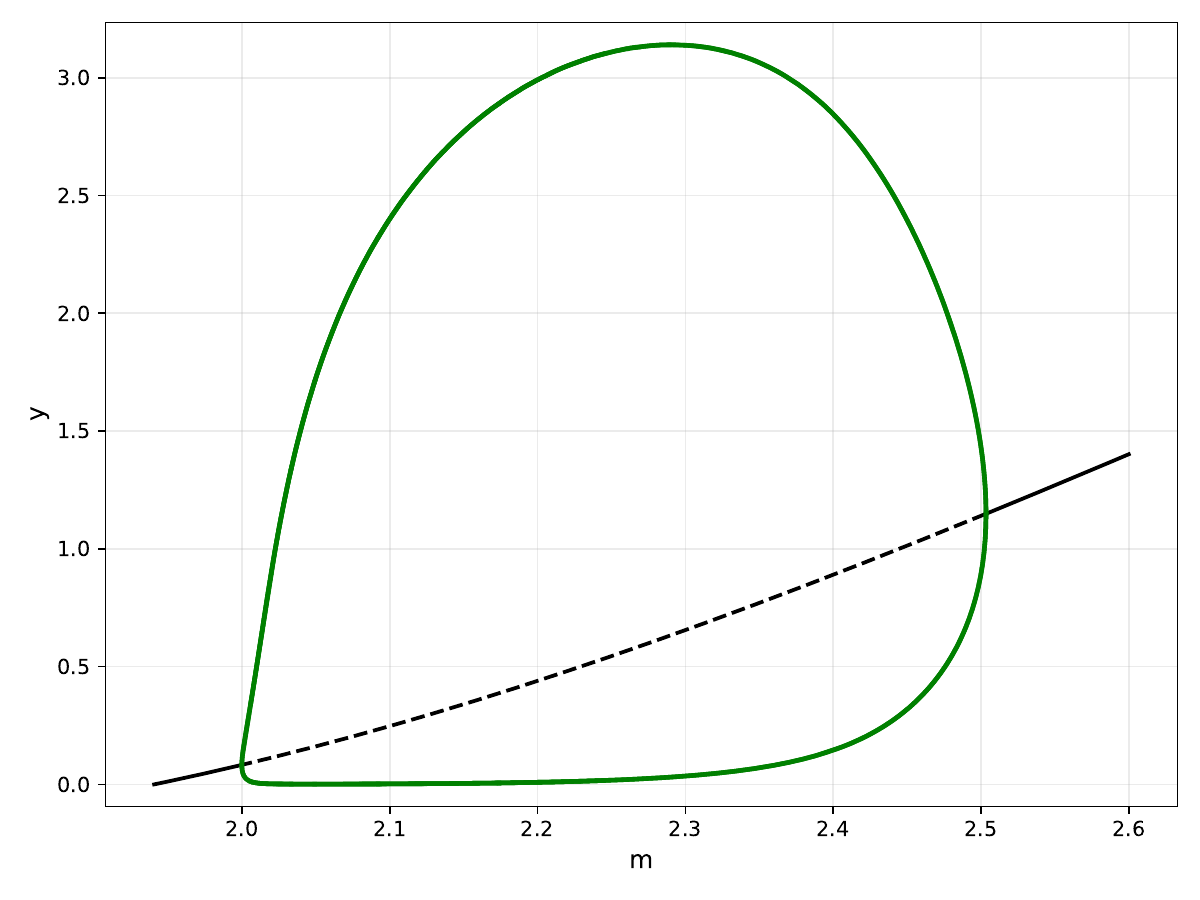}
        \caption{Case (v)}
        \label{fig:hopf-5}
    \end{subfigure}\hfill
    \begin{subfigure}[t]{.24\textwidth}
        \includegraphics[width=\linewidth]{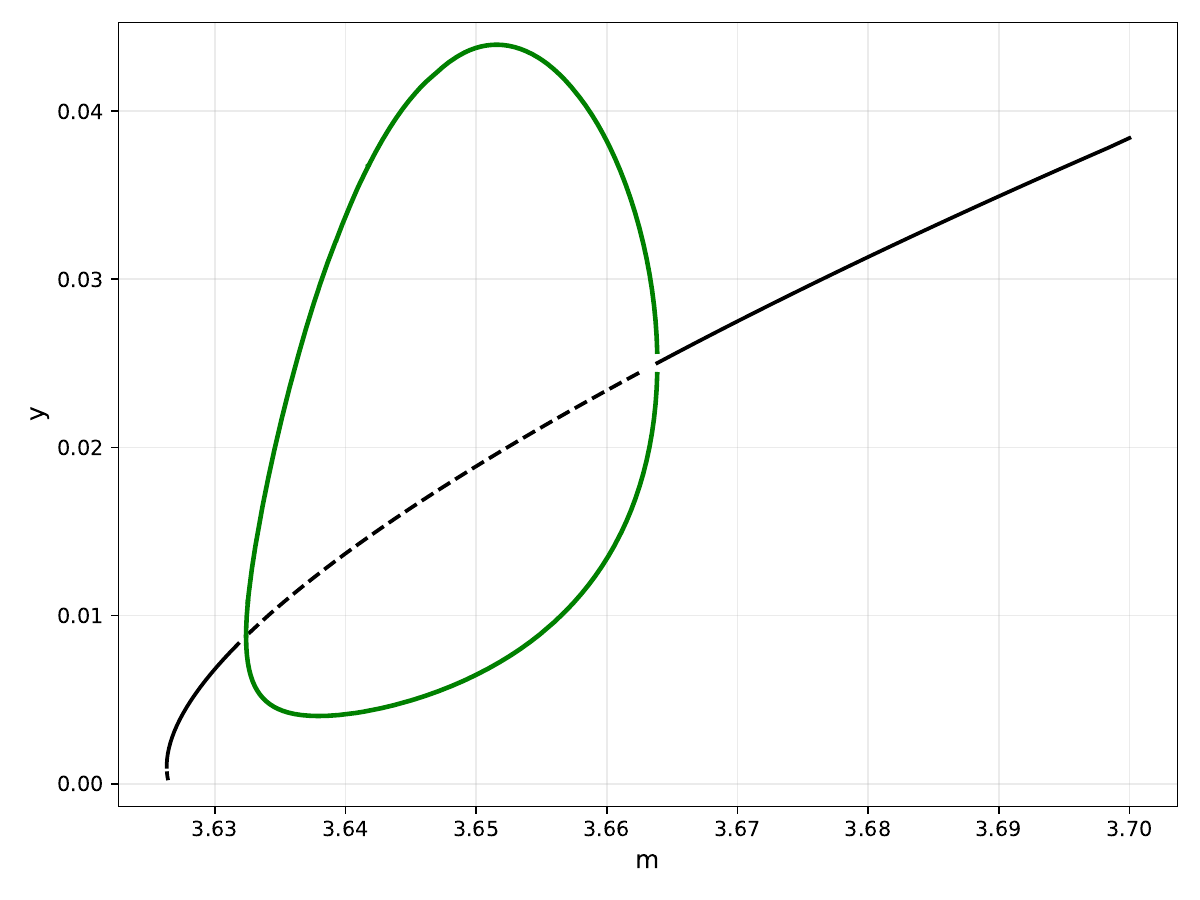}
        \caption{Case (vi)}
        \label{fig:hopf-6}
    \end{subfigure}
    \caption{Numerical results of Hopf bifurcation of case (i), (ii), (v), and (vi) in \Cref{tab:classification-bifurcation-and-coexistence}.}
    \label{fig:Hopf-bifurcation}
\end{figure}

These results demonstrate that the relative predation efficiency $m$ affects not only the existence of the positive equilibrium but also the type of coexistence dynamics; that is, increasing $m$ may stabilize or destabilize the positive equilibrium and create or destroy a periodic orbit.

\subsection{Chaos}

We next investigate whether the oscillatory coexistence dynamics can develop chaotic behavior. The chaotic behavior can also be found in the Lotka-Volterra IGP model \cite{tanabe2005omnivory}. In our model, we also found parameters for which chaotic behavior occurs, and the numerical results are shown in \Cref{fig:chaos}. For each value of $m$, the system is first solved over the time interval $[0,5000]$ to ensure the solution is close to the strange attractor. The state $(x(5000),y(5000),z(5000))$ is then used to investigate the long-term dynamics.

\begin{figure}
    \centering
    \begin{subfigure}[t]{.35\textwidth}
        \includegraphics[width=\textwidth]{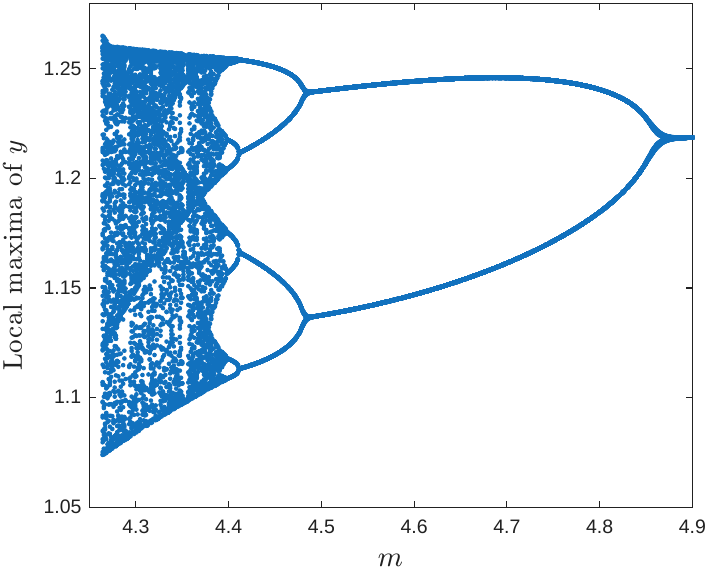}
        \caption{The bifurcation diagram}
        \label{fig:chaos-bifurcation-diagram}
    \end{subfigure}\hspace{1cm}
    \begin{subfigure}[t]{.37\textwidth}
        \includegraphics[width=\textwidth]{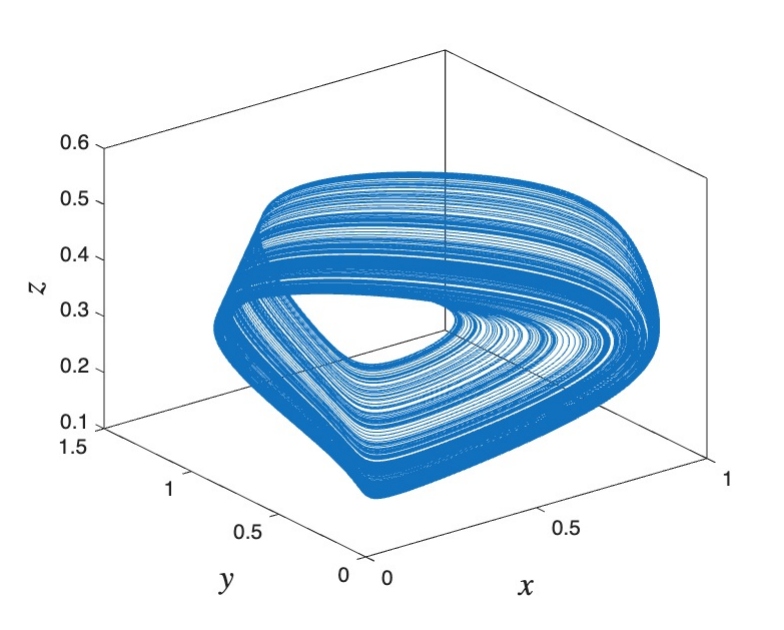}
        \caption{The strange attractor}
        \label{fig:chaos-strange-attractor}
    \end{subfigure}
    \caption{These two figures use parameters $c_1=1.5985$, $c_2=0.1333$, $c_3=0.4511$, $e_1=1.2604$, $e_2=1.2577$, $e_3=0.1168$, $a_1=12.1011$, $a_2=0.4320$, $a_3=1.4951$, $d_1=0.1885$, $d_2=0.0545$, and initial data $(0.3362,0.2363,0.4810)$.}
    \label{fig:chaos}
\end{figure}

\Cref{fig:chaos-bifurcation-diagram} presents the bifurcation diagram obtained by recording the local maxima of $y(t)$ for $t>5000$ with the $m\in[4.3,4.9]$. To distinguish chaotic dynamics from a long-period periodic orbit, we compute the maximal Lyapunov exponent. The initial data is chosen by $(0.3362,0.2363,0.4810)^\top$, and the initial perturbation direction is chosen by $(1,1,1)^{\top}$. For $m=4.31$, the resulting maximal Lyapunov exponent is approximately $0.0099$. The strictly positive maximal Lyapunov exponent provides numerical evidence for chaotic dynamics. \Cref{fig:chaos-strange-attractor} displays the strange attractor in $\mathbb{R}^3_{>0}$ when $m=4.31$. The trajectory remains bounded and exhibits a complicated, aperiodic structure in phase space. 

\section{Conclusion}

In this work, we studied a three-species intraguild predation model with ratio-dependent trophic functions.
We derived explicit conditions for consumer extinction, total extinction, and competitive exclusion.
We also examined how these boundary outcomes relate to positive equilibria 
by treating $m$ as a bifurcation parameter and obtained a global description of the entire positive equilibrium set.

Our main contribution is the explicit global parameterization of all positive equilibria in terms of the weighted consumer-to-resource ratio $w$. Under the generic assumptions, the positive equilibrium set forms a single smooth curve whose closure contains $(E_{xz},m_{xz})$
and its global continuation is completely determined by the three quantities $\mathcal I_z^y$, $\Delta_{xy}$, and $\Delta_{xz}$. Their signs divide the parameter space into eight generic regimes, which give rise to six distinct global geometries of the equilibrium branch.

When $\mathcal I_z^y>0$, the branch either increases monotonically toward arbitrarily large values of $m$ or develops a unique minimum fold before extending to $m=\infty$. When $\mathcal I_z^y<0$, the branch connects $(E_{xz},m_{xz})$ to $(E_{xy},m_{xy})$ and may be monotonically increasing, monotonically decreasing, or possess a unique minimum or maximum fold. 
In the fold cases, two positive equilibria may coexist at the same value of $m$. Thus, the eight parameter regimes distinguish the different mechanisms determining the branch, while the six geometries offer a complete classification of its possible global configurations.

This classification has a biological interpretation.
The sign of $\mathcal I_z^y>0$ determines whether a rare IG predator has positive per-capita growth when its gain comes solely from the IG prey. When $\mathcal I_z^y>0$, this limiting growth rate is positive, and the positive-equilibrium branch extends to arbitrarily large values of $m$.
However, when $\mathcal{I}_z^y<0$, the IG prey cannot alone support
a rare IG predator, and the positive equilibrium curve instead terminates at the IG-predator-free equilibrium $E_{xy}$. Consequently, $\mathcal I_z^y$ determines the global endpoint of the coexistence curve, whereas $\Delta_{xy}$ and $\Delta_{xz}$ determine its orientation and possible fold structure.

We remark that the geometry of the positive equilibrium branch does not by itself completely determine stability. Our determinant relation proves that decreasing portions of the branch are unstable, while the Routh--Hurwitz conditions provide stability criteria along increasing portions of the branch. This shows that changing $m$ may cause a positive equilibrium to gain or lose stability, and may generate periodic coexistence. The numerical bifurcation diagrams further illustrate how stable coexistence equilibria, periodic solutions, multistability, and, in some parameter regimes, chaotic dynamics can be observed around the six classified branch geometries.

Overall, our results show that the effect of $m$ is not simply beneficial or harmful to coexistence, but depends on the asymmetric trophic interactions in the system. Thus, it should be viewed not merely as a stabilizing mechanism, but as a factor that can qualitatively reorganize the dynamics of the entire IGP community with ratio-dependent trophic function. 

\appendix

\section{Proofs of Selected Results}\label{appendix:proofs}

\begin{proof}[Proof of \Cref{lemma:y-extinct}]
    Whenever $y_0=0$, then $y(t)=0$ for all $t>0$, Hence, it suffices to consider $y_0>0$. Inspire by Hsu et al. \cite{hsu2001rich}, let $V=\frac{y^\theta}{z}$, where $\theta>0$ will be determined later. Let $\omega=my+z$, then 
    \begin{align*}
        \dot{V}(t)=\frac{y^\theta}{z}\left(\frac{P(x,\omega)}{(a_1x+\omega)(a_2x+\omega)}-\frac{{\theta c_3z}+e_3c_3y}{a_3y+z}\right),
    \end{align*}
    where 
    \begin{align*}
        P(x,\omega;\theta)&=[a_2\theta(e_1c_1m-a_1d_1)-a_1(e_2c_2-a_2d_2)]x^2+(d_2-\theta d_1)\omega^2\\
        &\quad+[\theta(e_1c_1m-a_1d_1-a_2d_1)-(e_2c_2-a_2d_2-a_1d_2)]x\omega\\
        &=:A(\theta)x^2+B(\theta)x\omega+C(\theta)\omega^2
    \end{align*}
    Take $\theta=\theta^*=(e_2c_2-a_2d_2+a_1d_2)/(e_1c_1m-a_1d_1+a_2d_1)>0$. The discriminant of $P$ is
    \begin{align*}
        \Delta(\theta^*)=4\theta^*d_1d_2(a_1-a_2)\left(\frac{a_1}{d_1}\mcI_y^1-\frac{a_2}{d_2}\mcI_z^1\right)\leq0
    \end{align*}
    and
  \begin{align*}
        A(\theta^*)&=\frac{1}{e_1c_1m-a_1d_1+a_2d_1}\left[a_1a_2\mcI_y^1\mcI_z^1(a_2-a_1)+a_1a_2d_1d_2\left(\frac{a_1}{d_1}\mcI_y^1-\frac{a_2}{d_2}\mcI_z^1\right)\right]<0\\
        B(\theta^*)&=\frac{2d_1d_2}{e_1c_1m-a_1d_1+a_2d_1}\left(\frac{a_1^2}{d_1}\mcI_y^1-\frac{a_2^2}{d_2}\mcI_z^1\right).\\
        C(\theta^*)&=\frac{d_1d_2}{e_1c_1m-a_1d_1+a_2d_1}\left[\left(\frac{a_1}{d_1}\mcI_y^1-\frac{a_2}{d_2}\mcI_z^1\right)+(a_2-a_1)\right]<0.
    \end{align*}
    Apply the discussion in \cite{hsu2001rich}, we have
        $P(x,\omega;\theta^*)\leq0$ for all $x,\omega\geq0$.
    Finally, if $\dot{V}=0$, then $y=0$; if $\dot{V}<0$, from
    \begin{align*}
        \dot{V}=V\left(\frac{P(x,\omega)}{(a_1x+\omega)(a_2x+\omega)}-\frac{{\theta^* c_3z}+e_3c_3y}{a_3y+z}\right)\leq-V\cdot\frac{e_3c_3y+\theta^*c_3z}{a_3y+z}\leq-\Tilde{C}V,
    \end{align*}
    where $\Tilde{C}=\min\left\{\frac{e_3c_3}{a_3},\theta^* c_3\right\}>0$, then we have
     $y(t)^{\theta^*}\leq z(t)V(0)e^{-\Tilde{C}t}\leq M_zV(0)e^{-\Tilde{C}t}\to0$ as $t\to\infty$.
    Therefore, $\lim\limits_{t\to\infty}y(t)=0$.
\end{proof}

\begin{proof}[Proof of \Cref{thm:for-m-large}]
    First, notice that as $w\to\infty$, we have $m\to\infty$, and then
    \begin{align*}
        v&=\frac{(d_2-\frac{e_2c_2}{a_2+w})m}{e_3c_3-(d_2-\frac{e_2c_2}{a_2+w})(a_3-m)}=\frac{(d_2-\frac{e_2c_2}{a_2+w})}{\frac{e_3c_3}{a_3-m}-(d_2-\frac{e_2c_2}{a_2+w})}\frac{m}{a_3-m}\to1,\\
        x&=1-\frac{c_1vw}{a_1+w}-\frac{c_2w(1-v)}{a_2+w}\to1-c_1>0,\\
        m'&=\frac{(a_3\mcI_z^y+e_3d_1)(a_2+w)^2+e_2c_2a_3(a_2-a_1)}{e_1e_3c_1(a_2+w)^2}\to\frac{a_3\mcI_z^y+e_3d_1}{e_1e_3c_1}.
        \end{align*}
    In addition, for $\alpha_i$, $i=1,2,3$, as  $w\to\infty$,
    \begin{align*}  
        \alpha_1&=x+\frac{e_3-a_3}{e_3c_3}\left(\frac{e_1c_1m}{a_1+w}-d_1\right)\left(d_2-\frac{e_2c_2}{a_2+w}\right)+\frac{c_1(e_1m-a_1)vw}{(a_1+w)^2}+\frac{c_2(e_2-a_2)(1-v)w}{(a_2+w)^2}\\
        &\to1-c_1+\frac{1}{e_3^2c_3}(a_3^2(\mcI_z^y)^2+e_3a_3d_2\mcI_z^y+e_3^2c_3d_1)>0,\\
        \alpha_2&=\frac{e_3-a_3}{e_3c_3}\left(\frac{e_1c_1m}{a_1+w}-d_1\right)\left(d_2-\frac{e_2c_2}{a_2+w}\right)\left(x-\frac{c_1a_1vw}{(a_1+w)^2}-\frac{c_2a_2(1-v)w}{(a_2+w)^2}\right)\\
        &\quad+x\left(\frac{e_1c_1mvw}{(a_1+w)^2}+\frac{e_2c_2(1-v)w}{(a_2+w)^2}\right)\\
        &\quad+v(1-v)w\left(\frac{e_1c_1m}{(a_1+w)^2}-\frac{e_2c_2}{(a_2+w)^2}\right)\left(\frac{c_1w}{a_1+w}-\frac{c_2w}{a_2+w}\right)\\
        &\quad+\frac{e_1c_1wm'(w)}{c_3(a_1+w)}\left(\frac{e_1c_1m}{a_1+w}-d_1\right)\left(d_2-\frac{e_2c_2}{a_2+w}\right)\\
        &\to\frac{1-c_1}{e_3^2c_3}(a_3^2(\mcI_z^y)^2+e_3a_3d_2\mcI_z^y+e_3^2c_3d_1)+\frac{d_2}{e_3^2c_3}a_3\mcI_z^y(a_3\mcI_z^y+e_3d_1)>0,\\
        \alpha_3&=\frac{e_1c_1xwm'(w)}{c_3(a_1+w)}\left(\frac{e_1c_1m}{a_1+w}-d_1\right)\left(d_2-\frac{e_2c_2}{a_2+w}\right)\to\frac{d_2}{e_3^2c_3}a_3\mcI_z^y(a_3\mcI_z^y+e_3d_1)(1-c_1)>0.
 \end{align*}
 Therefore, as $w\to\infty$,
        \begin{align*}
        H&\to\frac{1}{e_3^2c_3}(a_3^2(\mcI_z^y)^2+e_3a_3d_2\mcI_z^y+e_3^2c_3d_1)\left[(1-c_1)^2\right.\\
        &\quad+\left.\frac{1-c_1}{e_3^2c_3}(a_3^2(\mcI_z^y)^2+e_3a_3d_2\mcI_z^y+e_3^2c_3d_1)+\frac{d_2}{e_3^2c_3}a_3\mcI_z^y(a_3\mcI_z^y+e_3d_1)\right]>0
    \end{align*}
    Therefore, by the Routh-Hurwitz criterion, there is $\overline{m}>0$ such that if $m>\overline{m}$, the positive equilibrium is locally asymptotically stable.
\end{proof}


\section*{Acknowledgements}

CHW is supported partly by the National Science and Technology Council of Taiwan (NSTC 113-2628-M-A49-004-MY4) and the National Center for Theoretical Sciences.

\bibliographystyle{siam}
\bibliography{reference}
\end{document}